\theoremstyle{plain}
 \newtheorem{theorem}{Theorem}[section]
 \newtheorem{lemma}[theorem]{Lemma}
 \newtheorem{proposition}[theorem]{Proposition}
 \newtheorem{corollary}[theorem]{Corollary}
 \newtheorem{conjecture}[theorem]{Conjecture}
 \newtheorem{claim}[theorem]{Claim}
\theoremstyle{def}
 \newtheorem{assumption}[theorem]{Assumption}
\theoremstyle{remark}
\newtheorem{remark}[theorem]{Remark}
 \newtheorem{definition}{Definition}[section]
\numberwithin{equation}{section}
\newcommand\nc\newcommand
\newcommand\dmo\DeclareMathOperator
\dmo{\re}{Re}
\dmo{\im}{Im}
\dmo{\sign}{sign}
\dmo{\spt}{spt}
\dmo{\supp}{supp}
\dmo{\sym}{Sym}
\dmo{\R}{\mathbb{R}}
\dmo{\C}{\mathbb{C}}
\dmo{\N}{\mathbb{N}}
\dmo{\Z}{\mathbb{Z}}
\dmo{\Id}{{Id}}
\nc{\erdos}{Erd\H os }
\nc{\er}{Erd\H os-R\'enyi } 
\def \<{\langle}
\def \>{\rangle}
\def \etc {,\ldots,}
\def \lf {\lfloor}
\def \rf {\rfloor}
\nc{\norm}[1]{\left \| #1 \right \|}
\nc{\expo}[1]{\exp \left( #1 \rule{0mm}{3mm}\right)}
\DeclarePairedDelimiter\parentheses{\lparen}{\rparen}
\dmo{\e}{\mathbb{E}}
\dmo{\var}{Var}
\dmo{\pr}{\mathbb{P}}
\dmo{\un}{\mathbbm{1}}
\nc{\eqd}{\,{\buildrel d \over =}\,}
\dmo{\ber}{Ber}
\dmo{\sber}{Ber_{\pm}}
\dmo{\bin}{Bin}
\nc{\bad}{\mathcal{B}}
\nc{\event}{\mathcal{E}}
\nc{\good}{\mathcal{G}}
\nc{\pro}[1]{\mathbb{P}\parentheses*{#1 \rule{0mm}{0mm}}}
\nc{\pros}[2][]{\mathbb{P}_{#1} \parentheses*{ #2 \rule{0mm}{3mm}}}
\nc{\ero}[1]{\mathbb{E}\parentheses*{ #1 \rule{0mm}{3mm}}}
\nc{\eros}[2][]{\mathbb{E}_{#1} \parentheses*{ #2 \rule{0mm}{3mm}}}
\nc{\set}[1]{\left\{ #1 \right\}}
\nc{\cond}{\left\vert\vphantom{\frac{1}{1}}\right.}
\dmo{\tr}{tr}
\dmo{\rank}{rank}
\dmo{\corank}{corank}
\def \tran {\mathsf{T}}
\nc{\mat}[4][]{\begin{pmatrix} #1 & #2 \\ #3 & #4\end{pmatrix}}
\dmo{\idd}{\mat[1]{0}{0}{1}}
\dmo{\jdd}{\mat[0]{1}{1}{0}}
\nc{\twov}[2][]{\begin{pmatrix} #1 \\ #2 \end{pmatrix}}
\dmo{\eone}{\twov[1]{0}}
\dmo{\etwo}{\twov[0]{1}}
\nc{\Span}{\operatorname{span}}
\nc{\eps}{\varepsilon}
\nc{\ep}{\epsilon}
\nc{\tx}{\tilde{x}}
\nc{\ty}{\tilde{y}}
\nc{\tpi}{\tilde{\pi}}
\nc{\mA}{\mathcal{A}}
\nc{\mB}{\mathcal{B}}
\nc{\mC}{\mathcal{C}}
\nc{\mD}{\mathcal{D}}
\nc{\mE}{\mathcal{E}}
\nc{\mF}{\mathcal{F}}
\nc{\mG}{\mathcal{G}}
\nc{\mH}{\mathcal{H}}
\nc{\mI}{\mathcal{I}}
\nc{\mJ}{\mathcal{J}}
\nc{\mK}{\mathcal{K}}
\nc{\mL}{\mathcal{L}}
\nc{\mM}{\mathcal{M}}
\nc{\mN}{\mathcal{N}}
\nc{\mO}{\mathcal{O}}
\nc{\mP}{\mathcal{P}}
\nc{\mQ}{\mathcal{Q}}
\nc{\mR}{\mathcal{R}}
\nc{\mS}{\mathcal{S}}
\nc{\mT}{\mathcal{T}}
\nc{\mU}{\mathcal{U}}
\nc{\mV}{\mathcal{V}}
\nc{\mW}{\mathcal{W}}
\nc{\mX}{\mathcal{X}}
\nc{\mY}{\mathcal{Y}}
\nc{\mZ}{\mathcal{Z}}
\DeclareMathSymbol{\wtil}{\mathord}{largesymbols}{"65}
\newcommand\lowerwtil{%
  \text{\smash{\raisebox{-1.3ex}{%
    $\wtil$}}}}
\newcommand\witil[1]{%
  \mathchoice
    {\accentset{\displaystyle\lowerwtil}{#1}}
    {\accentset{\textstyle\lowerwtil}{#1}}
    {\accentset{\scriptstyle\lowerwtil}{#1}}
    {\accentset{\scriptscriptstyle\lowerwtil}{#1}}
}
\DeclareMathSymbol{\widehatsym}{\mathord}{largesymbols}{"62}
\DeclareMathSymbol{\widechecksym}{\mathord}{largesymbols}{"71}
\nc{\tM}{\witil{M}}
\nc{\tR}{\witil{R}}
\nc{\tS}{\witil{S}}
\nc{\tT}{\witil{T}}
\nc{\tX}{\witil{X}}
\nc{\tY}{\witil{Y}}
\nc{\tZ}{\witil{Z}}
\nc{\tA}{\witil{A}}
\nc{\tB}{\witil{B}}
\nc{\tD}{\witil{D}}
\nc{\tevent}{{\witil{\event}}}
\nc{\tbad}{\witil{\bad}}
\nc{\hx}{\hat{x}}
\nc{\hy}{\hat{y}}
\nc{\hz}{\hat{z}}
\dmo{\degr}{deg}
\dmo{\ones}{\mathbf{1}}
\nc{\eye}{{\mathbf{I}_2}}
\nc{\jay}{{\mathbf{J}_2}}
\nc{\rowpair}{(i_1,i_2)}
\dmo{\sls}{sls}
\dmo{\SLS}{SLS}
\dmo{\codeg}{codeg}
\dmo{\discrep}{discrep}
\dmo{\sparse}{sp}
\dmo{\co}{co}
\dmo{\Co}{Co}
\dmo{\Ex}{Ex}
\dmo{\ex}{ex}
\dmo{\exbar}{\overline{ex}}
\nc{\nbs}[4][]{\mN^{#1}_{(#2,#3)}(#4)}
\dmo{\Steps}{Steps}
\dmo{\steps}{steps}
\dmo{\Flats}{Flats}
\dmo{\Cross}{Cross}
\dmo{\Perm}{Perm}
\dmo{\disc}{disc}
\nc{\pe}{p}
\dmo{\simple}{simple}
\dmo{\reg}{reg}
\dmo{\ee}{edge}
\dmo{\thin}{thin}
\dmo{\Fix}{Frozen}
\dmo{\schur}{\circ}
\tikzset{
	My Style/.style={
        circle,
        draw,
        fill          = black!50,
        inner sep     = 0pt,
        minimum width =4 pt
    }   
}
\title[Random regular digraphs: singularity of the adjacency matrix]{On the singularity of adjacency matrices\\ for random regular digraphs}
\author{Nicholas A. Cook}
\address{Department of Mathematics, UCLA, Los Angeles, CA 90095-1555}
\email{nickcook@math.ucla.edu}
\let\oldtocsubsection=\tocsubsection
\renewcommand{\tocsubsection}[2]{\hspace*{1.1cm}\oldtocsubsection{#1}{#2}}
\begin{document}

\maketitle

\begin{abstract}
We prove that the (non-symmetric) adjacency matrix of a uniform random $d$-regular directed graph on $n$ vertices is asymptotically almost surely invertible, assuming 
$\min(d,n-d)\ge C\log^2n$ for a sufficiently large constant $C>0$.
The proof makes use of a coupling of random regular digraphs formed by ``shuffling" the neighborhood of a pair of vertices, as well as concentration results for the distribution of edges, proved in \cite{Cook:discrep}.
We also apply our general approach to prove a.a.s.\ invertibility of Hadamard products $\Sigma\schur \Xi$, where $\Xi$ is a matrix of iid uniform $\pm1$ signs, and $\Sigma$ is a 0/1 matrix whose associated digraph satisfies certain ``expansion" properties. 
\end{abstract}

\setcounter{tocdepth}{3}
\tableofcontents

\section{Introduction}
\label{sec_intro}

For $n\ge 1$ and $d\in [n]$, let $\mM_{n,d}$ be the set of $n\times n$ matrices with entries in $\{0,1\}$ satisfying the constraint that all row and column sums are equal to $d$. 
(For instance, we have that $\mM_{n,1}$ is the set of $n\times n$ permutation matrices.)
One may interpret the elements of $\mM_{n,d}$ as the adjacency matrices of $d$-regular digraphs -- that is, directed graphs on $n$ labeled vertices with each vertex having $d$ in-neighbors and $d$ out-neighbors (allowing self-loops). 
One can also identify $\mM_{n,d}$ with the set of $d$-regular bipartite graphs on $n+n$ vertices in the obvious way. 

We denote by $M$ a uniform random element of $\mM_{n,d}$, and refer to $M$ as an ``rrd matrix" (for ``random regular digraph"). 
Our objective in this paper is to determine whether $M$ is invertible with high probability when $n$ is large and for some range of the parameter $d$.
Before stating our main result, we give an overview of related work on other random matrix models.

\subsection{Background}
\label{sec_background}

Much work on the singularity of random matrices has focused on \emph{iid sign matrices} $\Xi$, 
whose entries are iid uniform $\pm1$ Bernoulli random variables.
It is already a non-trivial problem to prove that $\Xi$ is invertible with probability tending to 1; 
this was first accomplished by Koml\'os in the works \cite{Komlos, Komlos2} from the 1960s.
His proof was later refined to give the following quantitative bound:
\begin{theorem}[Koml\'os \cite{Komlos:net}]
\label{thm_komlos}
Let $\Xi$ be an $n\times n$ matrix of iid uniform signs. 
Then 
\begin{equation}	\label{komlosbd}
\pr\big( \det(\Xi) = 0 \big) = O\big( n^{-1/2}\big).
\end{equation}
\end{theorem}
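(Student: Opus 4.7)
\begin{proofsketch}
The plan is to condition on the first $n-1$ rows of $\Xi$ and extract the randomness of the last row through an anti-concentration inequality. Write $X_1,\dots,X_n$ for the rows of $\Xi$ and let $M_i$ denote the $(n-1)\times(n-1)$ minor obtained by deleting row $n$ and column $i$. Expanding the determinant along row $n$ gives
\[
\det(\Xi) \;=\; \sum_{i=1}^n (-1)^{n+i}\,\xi_{n,i}\,M_i,
\]
which, conditional on $X_1,\dots,X_{n-1}$, is a Rademacher sum with fixed coefficients $(-1)^{n+i}M_i$. Setting $K := \#\{i : M_i \ne 0\}$, the Erd\H os--Littlewood--Offord anti-concentration inequality then yields
\[
\pr\bigl(\det(\Xi)=0 \,\big|\, X_1,\dots,X_{n-1}\bigr) \;\le\; \frac{C}{\sqrt{K}}
\]
whenever $K\ge 1$.

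It therefore suffices to show that $K = \Omega(n)$ with high probability. The key structural observation is that $M_i = 0$ iff the $(n-1)\times(n-1)$ matrix obtained from $(X_1,\dots,X_{n-1})$ by deleting column $i$ is singular, and this submatrix is itself distributed as an iid $\pm 1$ matrix of size $n-1$. Writing $p_n := \pr(\det \Xi = 0)$, linearity of expectation gives $\e[\#\{i : M_i = 0\}] = n\,p_{n-1}$, so Markov's inequality yields $\pr(K < n/2) \le 2p_{n-1}$; likewise $\pr(K = 0) \le p_{n-1}$ since $\{K=0\}$ is contained in the event that any one prescribed submatrix is singular. Integrating the anti-concentration bound across the events $\{K = 0\}$, $\{1 \le K < n/2\}$, and $\{K \ge n/2\}$ then produces the recursive estimate
\[
p_n \;\le\; (1 + 2C)\,p_{n-1} \;+\; C\sqrt{2/n}.
\]

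The main obstacle is that this naive recursion carries a multiplicative factor strictly greater than $1$, so a direct induction does not close at the target rate $n^{-1/2}$. I would address this in two moves. First, Koml\'os's prior qualitative bound $p_n = o(1)$ (his earlier results from \cite{Komlos, Komlos2}) fed into the recursion already yields $p_n = O(n^{-1/2})$ with some, possibly large, absolute constant. Second, and more delicately, the Markov-based estimate on $\#\{i : M_i = 0\}$ must be sharpened: the minors $M_1,\dots,M_n$ share $n-2$ of their columns and so are tightly correlated, and a second-moment or truncated-variance computation should upgrade the Markov bound to $\pr(K < \eta n) \ll p_{n-1}$ for some $\eta \in (0,1]$, with an implicit constant small enough to absorb the $1+2C$ factor in the recursion. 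Producing this sharpened concentration estimate for the count of singular $(n-1)\times(n-1)$ sub-minors is the technical heart of the argument, and is where the special algebraic structure of the family $\{M_i\}$ must be exploited.
\end{proofsketch}
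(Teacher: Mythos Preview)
Your setup via cofactor expansion is essentially the paper's approach in disguise: the cofactor vector $u = ((-1)^{n+i}M_i)_i$ is precisely a normal vector to $\Span(X_1,\dots,X_{n-1})$, and the paper likewise reduces to bounding $\pr(X_n\cdot u = 0)$ via the Erd\H os inequality. The divergence is in how you control $K = |\spt(u)|$, and this is where your argument has a real gap.

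Your fix (a) does not work. Feeding $p_{n-1}=o(1)$ into the recursion $p_n \le (1+2C)p_{n-1} + O(n^{-1/2})$ only recovers $p_n = o(1)$, not $O(n^{-1/2})$: if hypothetically $p_{n-1}\asymp 1/\log n$, the right side is still $\asymp 1/\log n$, so no bootstrapping occurs. Any recursion with leading coefficient $>1$ will have this defect. Your fix (b) is also a red herring --- the minors $M_i$ are correlated in a complicated way, and there is no reason to expect a second-moment computation to push the constant below $1$.

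What the paper does instead is abandon the recursion entirely and prove \emph{directly} that $\pr(K<(1-\eta)n)$ is overwhelmingly small, via a separate structural lemma (Proposition~\ref{prop_structber}): with overwhelming probability $\Xi$ has no nontrivial $(1-\eta)n$-sparse null vectors. The proof is a union bound over possible supports $T\in\binom{[n]}{k}$ of a putative sparse null vector, combined with the observation that once the support is fixed, the null vector is determined by $k-1$ rows, and then each of the remaining $n-k+1$ rows independently satisfies $R_i\cdot x=0$ with probability $O(k^{-1/2})$ by Erd\H os. The point is that $K<(1-\eta)n$ means the $(n-1)\times n$ submatrix has a sparse null vector; on the event $\{\det\Xi = 0\}$ this vector is a null vector of the \emph{full} matrix, so one can restrict to the good event for the full matrix and conclude $K\ge(1-\eta)n$ there. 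This replaces your Markov/recursion step with an exponential bound, after which the Erd\H os inequality finishes the job cleanly.
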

The asymptotic notation in \eqref{komlosbd} and throughout this paper is with respect to the large $n$ limit -- see Section \ref{sec_notation} for our notational conventions. 

A key ingredient in the proof of Theorem \ref{thm_komlos} was a bound of Littlewood-Offord type due to Erd\H os (Theorem \ref{thm_erdos} below) from the seemingly unrelated field of \emph{additive combinatorics}. 
This inspired a sequence of works improving \eqref{komlosbd} to exponential bounds
\begin{equation}	\label{expobd}
\pr\big( \det(\Xi) = 0 \big) \ll c^n
\end{equation}
for some constant $c<1$
by making heavier use of additive combinatorics machinery.
Specifically, the base $c=.999$ was obtained by Kahn, Koml\'os and Szemer\'edi in \cite{KKS}, and was lowered to $c=3/4+o(1)$ by Tao and Vu \cite{TaVu:sing}, and to $c=1/\sqrt{2}+o(1)$ by Bourgain, Vu and Wood \cite{BVW}.
The latter two works relied on the \emph{inverse Littlewood-Offord theory} developed in \cite{TaVu:sing}.
These bounds still fall short of the folklore conjecture
\begin{equation}		\label{conjbd}
\pr\big( \det(\Xi) = 0 \big) =n^2(1+o(1))2^{1-n}
\end{equation}
which has been stated in \cite{Komlos2, KKS}.
The lower bound in \eqref{conjbd} is easily proved by considering the event that $\Xi$ has a pair of rows or columns that are parallel.

One source of motivation for controlling the singularity probability is its relation to the problem of proving limit laws for the distribution of eigenvalues.
Define the (rescaled) \emph{empirical spectral distribution} of $\Xi$ to be the random probability measure
$$\mu_{\frac{1}{\sqrt{n}}\Xi}:= \frac1n\sum_{i=1}^n \delta_{\lambda_i(\frac{1}{\sqrt{n}}\Xi)},$$
distributed uniformly over the eigenvalues of the normalized matrix $\frac{1}{\sqrt{n}}\Xi$.
In \cite{TaVu:esd} Tao and Vu proved the \emph{circular law} for $\Xi$, which states that almost surely, as $n\rightarrow \infty$, $\mu_{\frac{1}{\sqrt{n}}\Xi}$ converges weakly to the uniform measure on the unit disc in $\C$. 
They actually proved a \emph{universality principle}, which implies that the circular law holds for any matrix with iid entries having mean 0 and variance 1.

The main technical hurdle in the proof of the circular law was to obtain good lower bounds on the least singular value $\sigma_n(\Xi)$ holding with high probability. (Actually, it was necessary to do this for arbitrary scalar shifts $\frac{1}{\sqrt{n}}\Xi-zI$.)
Proving lower bounds on $\sigma_n(\Xi)$ is an extension of the singularity probability problem -- indeed, the latter is to bound $\pr(\sigma_n(\Xi)=0)$.
Polynomial lower bounds on the least singular value of a general class of iid matrices were first obtained by Rudelson in \cite{Rudelson}, and were subsequently improved by Tao and Vu \cite{TaVu:cond, TaVu:smooth} and Rudelson and Vershynin \cite{RuVe}.

See \cite{BoCh} for a survey of the circular law and related work.
The assumption of joint independence of the entries has been relaxed in some directions. 
For instance, the circular law is established for matrices with log-concave isotropic unconditional laws by Adamczak and Chafa\"{\i} in \cite{AdCh}.
Together with Wolff in \cite{ACW}, the same authors extend the circular law to matrices with exchangeable entries satisfying some moment bounds. 
(Note that while the rows and columns of the rrd matrix $M$ are exchangeable, the individual entries are not.)

Apart from iid matrices, a lot of activity has concentrated on random matrix models with constraints on row and column sums. 
In \cite{BCC}, Bordenave, Caputo and Chafa\"{\i} proved the circular law for random Markov matrices, obtained by normalizing the rows of an iid matrix with continuous entry distributions.  
On the discrete side, in \cite{Nguyen} Nguyen proved that a uniform random 0/1 matrix constrained to have all row-sums equal to $n/2$ (say $n$ is even) is invertible with probability $1-O_C(n^{-C})$.
Nguyen and Vu subsequently proved the circular law for a more general class of random discrete matrices with constant row sums \cite{NgVu}.

The approach in \cite{Nguyen} and \cite{NgVu} was to use a conditioning trick, which we now briefly sketch.
As in \cite{Nguyen}, assume $n$ is even, and let $Q$ be a uniform random $n\times n$ 0/1 matrix with all row sums equal to $n/2$.
Suppose we want to control the probability that some property $P$ holds for the first row $R_1$ of $Q$. 
We draw $Y_1\in \set{0,1}^n$ uniformly at random, and let $\event$ be the event that the components of $Y_1$ sum to $n/2$. 
We have $Y_1\,\big|\, \event \eqd R_1.$
Moreover, one can easily show that $\pr(\event) \gg n^{-1/2}$.
It follows that we can bound
\begin{align}
\pr(\,\mbox{$P$ holds for $R_1$}\,) &= \pro{ \mbox{$P$ holds for $Y_1$} \,\big|\, \event }\notag \\
&\le \frac{\pro{ \mbox{$P$ holds for $Y_1$}}}{\pr(\event)} \notag\\
&\ll n^{1/2}\pro{ \mbox{$P$ holds for $Y_1$}}.	\label{comparebd}
\end{align}
This last term can be controlled using the existing theory for iid matrices (the loss of a factor $O(n^{1/2})$ turns out to be acceptable).

The results from \cite{BCC}, \cite{Nguyen} and \cite{NgVu} still relied on the independence between rows. 
For the rrd matrix $M$ considered in the present work there is no independence among rows or columns.
In particular, an approach by conditioning iid variables as in \cite{Nguyen} can not treat each row separately,
and instead must condition on the event that the entire iid matrix is in $\mM_{n,d}$. 
The probability of this event can be estimated using asymptotic enumeration results. 
Letting $p:=d/n$, we draw a random 0/1 matrix $M_p$ with iid Bernoulli($p$) entries, and let
\begin{equation}		\label{eventd}
\event_{n,d}=\set{M_p\in \mM_{n,d}}.
\end{equation}
Then $M_p|\event_{n,d} \eqd M$.
We have 
\begin{equation}	\label{psimp}
\pr(\event_{n,d}) \sim \sqrt{2\pi d(n-d)}\expo{-n\log\left(\frac{2\pi d(n-d)}{n}\right)}
\end{equation}
which follows from an asymptotic formula for the cardinality of $\mM_{n,d}$, established for the sparse case $d=np=o(\sqrt{n})$ by McKay and Wang in \cite{McWa} and for the dense range $\min(d,n-d) \gg n/\log n$ by Canfield and McKay in \cite{CaMc}.

Although enumeration results for the range $\sqrt{n}\ll d\ll n/\log n$ are unavailable as of this writing (though it is natural to conjecture that the formula \eqref{psimp} extends to hold in this range), in \cite{Tran} Tran used an argument from \cite{ShUp} of Shamir and Upfal to show that for $d=\Omega(\log n)$, 
\begin{equation}		\label{psimplb}
\pr(\event_{n,d}) \ge \expo{-O\big(n\sqrt{d}\big)}.
\end{equation}
While weaker than \eqref{psimp}, this lower bound was enough to prove the quarter-circular law for the singular value distribution of $M$ using the conditioning trick (in fact Tran treated the more general case of rectangular 0/1 matrices with constant row and column sums, for which he proved the Marchenko--Pastur law).
The semi-circular law was established for undirected random regular graphs with $d\rightarrow \infty$ by a similar approach in \cite{TVW}.
It is worth noting that the Marchenko--Pastur and semi-circular laws were also obtained in \cite{DuPa} and \cite{DuJo} for 
the sparse regime $\omega(1)\le d\le n^{o(1)}$, 
using the fact that $d$-regular graphs converge locally (in a quantitative Benjamini--Schramm sense) to $d$-regular trees.

Hence, we see that with \eqref{psimplb} we are limited to importing properties of the iid matrix $M_p$ that hold with probability $1-O(\exp(-Cn\sqrt{np}))$ for some sufficiently large $C$,
and this can be slightly relaxed by using the formula \eqref{psimp} for the appropriate range of $d$. 
We note in particular that the results of the present work cannot be obtained by the conditioning trick.

On the continuous side, a similar conditioning approach was used to study uniform random doubly stochastic matrices in \cite{CDS} and \cite{Nguyen:uds}.
In \cite{CDS}, Chatterjee, Diaconis and Sly noted that this distribution can be obtained by conditioning a matrix with iid exponentially distributed entries. They proved the quarter circular law by similar lines to \cite{Tran}, relying on another asymptotic formula of Canfield and McKay from \cite{CaMc2} for the volume of the Birkhoff polytope.
Nguyen built on this work in \cite{Nguyen:uds} to prove the circular law for this model.

\subsection{Main results and conjectures}
\label{sec_main}

Our main result is an analogue of Koml\'os' Theorem \ref{thm_komlos} for rrd matrices, assuming that the matrix is not too sparse or too dense. 
Specifically, we assume that 
$\min(d,n-d) \ge C_0\log^2n$ for a sufficiently large constant $C_0>0$.
Our approach is by couplings rather than by the conditioning trick described above.
We give more detail and motivation for the proof strategy in Section \ref{sec_strategy} below.

\begin{theorem}[Main result]
\label{thm_main}
There are absolute constants $C_0,c_0>0$ such that the following holds. 
Assume $\min(d,n-d) \ge C_0\log^2n$, and let $M$ be a uniform random element of $\mM_{n,d}$. 
Then
\begin{equation}	\label{main_bound}
\pr\big(\det(M) = 0\big) = O(d^{-c_0}).
\end{equation}
\end{theorem}

\begin{remark}
The proof shows that we may take 
$c_0=1/18$, though we do not expect this bound to be optimal (see Conjectures \ref{conj_expo} and \ref{conj_poly} below).
\end{remark}

\begin{remark}[Lower bound on $d$]	\label{rmk_dlb0}
It is possible that our argument could be extended to only assume $\min(d,n-d)\ge C_0 \log n$, but a new approach will be certainly necessary beyond that -- see Remark \ref{rmk_dlb}.
\end{remark}

\begin{remark}	
\label{rmk_assumed}
One can easily show that a matrix $M\in \mM_{n,d}$ is invertible if and only if the ``complementary" matrix $M'$ with entries $M'(i,j) = 1-M(i,j)$
is invertible. 
Hence, in the proof of Theorem \ref{thm_main} we may assume that $p:= \frac{d}{n}\le \frac12$.
\end{remark}

\begin{remark}	\label{rmk:LLKT-JY}
Very recently (after the final version of this manuscript was sent for publication) an extension of the bound \eqref{main_bound} to lower values of $d$ has been accomplished in \cite{LLKT-JY}, along with an improvement in the exponent $c_0$.
Specifically, they are able to show that for some absolute constants $C,c>0$, 
$\pro{\det(M) = 0} =O((\log^3d )/\sqrt{d})$ if $C\le d\le cn/\log^2n$. 
Together with Theorem \ref{thm_main} this implies that a uniform random element $M\in \mM_{n,d}$ is invertible with probability tending to 1 as $n\rightarrow \infty$ if $\min(d,n-d)$ grows to $\infty$ at any speed, rather than at speed at least $\log^2n$.
See Remark \ref{rmk:LLKT-JY2} for some additional comments on this result.
\end{remark}

We believe that when $d$ is of linear size, the singularity probability is exponentially small, similarly to the bound \eqref{expobd} for iid sign matrices.

\begin{conjecture}	\label{conj_expo}
Fix $\pe_0\in \big(0,\frac12\big)$ and assume $\min(d,n-d)\ge \pe_0n$. 
Then 
$$\pr\big(\det(M) = 0\big) \le Ce^{-cn}$$
for constants $C,c>0$ depending only on $\pe_0$. 
\end{conjecture}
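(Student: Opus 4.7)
Since this is stated as a conjecture rather than a theorem with proof, my plan is to outline the most natural attack and identify where it breaks down. The overall strategy is modelled on the iid sign case, where the exponential bound $\pro{\det(\Xi)=0}\ll c^n$ of \cite{KKS, TaVu:sing, BVW} is obtained by combining an $\eps$-net argument on the sphere with inverse Littlewood--Offord theory. Accordingly, one would bound $\pro{\det M=0}\le \pro{\exists\, \mathbf{v}\in S^{n-1}:\, M\mathbf{v}=0}$ and decompose the sphere into compressible (near-sparse) vectors and incompressible vectors, treating each separately.

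For compressible $\mathbf{v}$ — those within distance $\eps$ of a $k$-sparse vector for appropriate small $k,\eps$ — I would use a polynomial-size $\eps$-net together with the discrepancy concentration estimates from \cite{Cook:discrep}: these control the number of edges between any two vertex subsets $S,T$ at the expected scale $d|S||T|/n$ with failure probability exponentially small in $n$, and for such $\mathbf{v}$ they should rule out $\|M\mathbf{v}\|=o(1)$ with probability $1-e^{-cn}$ whenever $\min(d,n-d)\ge \pe_0 n$.

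For incompressible vectors one needs a genuine substitute for inverse Littlewood--Offord theory adapted to the correlated regular-digraph setting. The plan would be to exploit the shuffle coupling used in the proof of \Cref{thm_main}: given $M$, performing $k$ independent swaps that interchange one neighbor between a pair of rows produces a family of $2^k$ related matrices in $\mM_{n,d}$. If $\mathbf{v}$ is incompressible, each swap should shift some coordinate of $M\mathbf{v}$ by a quantity with small concentration function, so after $\Omega(n)$ approximately independent swaps the probability that $M\mathbf{v}=0$ survives should decay exponentially, outside an ``additively structured'' exceptional set of $\mathbf{v}$ whose metric entropy is small enough to be absorbed by a separate net (analogously to the role of generalized arithmetic progressions in \cite{TaVu:sing}).

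The hard part will be achieving genuinely $\Omega(n)$ approximately-independent swaps in the dense regime. A single swap alters only two rows and two columns, and sequential swaps correlate rapidly through the regularity constraint; the concentration tools of \cite{Cook:discrep} currently yield decorrelation on only a polynomial rather than an exponential scale. Equivalently, one would need an inverse Littlewood--Offord theorem for uniform $d$-subset sums — classifying those $\mathbf{v}\in \R^n$ for which $\pros[R]{\sum_{j\in R}v_j=0}$ is anomalously large when $R$ is a uniform $d$-subset of $[n]$ — which as of this writing does not exist in usable form. This is precisely why \Cref{thm_main} delivers only a polynomial bound, and why the exponential improvement is stated here as a conjecture.
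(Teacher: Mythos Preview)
Your assessment is correct: the paper states this as an open conjecture and offers no proof, so there is nothing to compare against. Your outline of the natural attack (compressible/incompressible decomposition, shuffling to inject randomness, and the need for an inverse Littlewood--Offord theorem for uniform $d$-subset sums) and your diagnosis of the obstruction --- that the shuffling coupling as developed here does not furnish $\Omega(n)$ approximately independent bits of randomness, only polynomially many --- are consistent with the paper's methods and accurately explain why \Cref{thm_main} achieves only a polynomial bound.
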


We also conjecture that rrd matrices are invertible with high probability for much smaller values of $d$:
\begin{conjecture}	\label{conj_poly}
There are absolute constants $C,c>0$ such that for any $3\le d\le n-3 $ we have 
$$\pr\big(\det(M) = 0\big)\le Cn^{-c}.$$
\end{conjecture}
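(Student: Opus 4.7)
The plan is to split the problem by the magnitude of $d$. For $d \ge d_0$ with $d_0 = (\log n)^C$ for some large constant $C$, Theorem 1.1 combined with the complementation trick $M \leftrightarrow M'$ (where $M'(i,j) = 1 - M(i,j)$) already gives a singularity probability that decays as $d^{-c}$, which is far better than the target $n^{-c}$; so the genuine work lies in the sparse regime $3 \le d \le d_0$.

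For the sparse regime I would work via the configuration model, sampling $M$ by a uniform matching of $dn$ out-half-edges to $dn$ in-half-edges and conditioning on simplicity (which happens with probability bounded below by $e^{-O(d^2)}$ in this range). Then I would bound $\pr(\det M = 0)$ by decomposing according to the structure of a putative left null vector $v$. The contribution of \emph{structured} null vectors coming from local obstructions (e.g.\ a pair of identical rows, or a short linear dependence among $O(1)$ rows) can be estimated directly by first-moment calculations in the configuration model, and in fact this is the dominant term: for $d = 3$ the expected number of equal row-pairs is $\Theta(n^{-1})$, which one expects to saturate the conjectured bound. The contribution of \emph{unstructured}, or incompressible, null vectors must then be shown to be strictly smaller; this requires a Littlewood--Offord-type small-ball bound for inner products $\langle v, M_{i,\cdot}\rangle$ combined with a union bound over a discretization net of incompressible vectors.

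The main obstacle is the incompressible case. The inverse Littlewood--Offord theory of \cite{TaVu:sing, BVW} and the small-ball methods of \cite{RuVe} rely on full independence of the matrix entries, which is absent here, and the switching couplings underlying Theorem 1.1 produce only a few bits of fresh randomness per switch --- enough when $d = \omega(\log^2 n)$ but far too weak in the sparse regime. One possible route is to condition on most of the edges of $M$ and analyze the remaining ``free'' structure via a long chain of single-edge switches, extracting a row-by-row anti-concentration statement for $\langle v, M_{i,\cdot}\rangle$. Bridging this with the dense-regime argument would in turn require a hybrid step for the intermediate range $\log n \ll d \ll \log^2 n$, where neither the configuration-model local analysis nor the discrepancy estimates of \cite{Cook:discrep} are fully effective; this intermediate range is, in my view, the most technically delicate part of the conjecture.
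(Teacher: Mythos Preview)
This statement is a \emph{conjecture} in the paper, not a theorem: the paper does not prove it and explicitly leaves it open, noting only that it ``mirrors a similar conjecture of Vu'' for undirected regular graphs. So there is no proof in the paper to compare your proposal against.

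Your proposal is not a proof but a research plan, and you yourself identify the gap: the incompressible case in the sparse regime $3\le d\le (\log n)^C$. Your suggestion to extract anti-concentration from ``a long chain of single-edge switches'' is vague, and it is precisely the absence of such a mechanism that makes the conjecture open. The paper's own methods (shuffling couplings plus the discrepancy estimates of \cite{Cook:discrep}) already break down below $d=\omega(\log^2 n)$, as you note; the configuration-model first-moment bounds you sketch handle only the compressible (short linear dependence) contribution, and the remaining incompressible contribution is the entire difficulty. In short, the proposal correctly locates the obstruction but does not overcome it, so it does not constitute a proof of the conjecture.
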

This mirrors a similar conjecture of Vu in \cite{Vu:discrete} on the adjacency matrices of undirected $d$-regular graphs, which are the symmetric analogue of $M$.
When $d$ is bounded, considering the event that two columns of $M$ are parallel shows that we cannot hope for better than a polynomial bound on the singularity probability. 
$M$ is obviously invertible when $d=1$ as it is a permutation matrix in this case. 
On the other hand, it is not hard to show that for $d=2$, $M$ is \emph{singular} asymptotically almost surely.

Next we give a consequence of Theorem \ref{thm_main} for random sign matrices. 
Note that if we draw an iid matrix of signs $\Xi$ as in Theorem \ref{thm_komlos} and condition on the event that all rows and columns sum to 0, the resulting matrix $\Xi_0$ will be singular with null vector $\ones=(1,1,\dots,1)\in \R^n$.
It is an easy consequence of Theorem \ref{thm_main} that this is usually the only obstruction for invertibility.

\begin{corollary}
Assume $n$ is even, and let $\Xi_0$ be an $n\times n$ matrix of iid uniform $\pm1$ signs conditioned to have each row and column sum to 0.
Then with high probability, $\ker(\Xi_0)=\langle \ones\rangle$.
\end{corollary}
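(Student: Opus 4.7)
The plan is to reduce directly to \Cref{thm_main} via a simple affine change of variables. Set $J = \ones\ones^\tran$ for the $n\times n$ all-ones matrix, and consider the map $M \mapsto 2M-J$. It sends $\{0,1\}$-valued entries of $M \in \mM_{n,n/2}$ to $\pm 1$-valued entries, and sends row and column sums of $n/2$ to $2(n/2)-n=0$, so it is a bijection from $\mM_{n,n/2}$ onto the set of $n\times n$ sign matrices with vanishing row and column sums. Being affine, it pushes the uniform law forward to the uniform law, and we obtain the distributional identity
\[
\Xi_0 \eqd 2M - J,
\]
where $M$ is a uniform random element of $\mM_{n,n/2}$. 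The parameter $d=n/2$ satisfies $\min(d,n-d)=n/2=\omega(\log^2 n)$, so \Cref{thm_main} applies and tells us that $M$ is invertible with probability $1-o(1)$.

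The remaining task is a deterministic one: to show that on the event $\{M \text{ invertible}\}$ one has $\ker(2M-J)=\langle \ones\rangle$. The inclusion $\langle\ones\rangle\subseteq\ker(\Xi_0)$ is immediate because every row of $\Xi_0$ sums to zero. For the reverse inclusion, suppose $(2M-J)x=0$, i.e.
\[
2Mx = (\ones^\tran x)\,\ones.
\]
Since $M\ones=(n/2)\ones$, setting $\alpha := (\ones^\tran x)/n$ rewrites this as $M(x-\alpha\ones)=0$. Invertibility of $M$ then forces $x=\alpha\ones\in\langle\ones\rangle$, as desired.

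I do not expect any serious technical obstacle here. The substance of the corollary is carried entirely by \Cref{thm_main}; once one notices that the model of $\pm 1$ matrices with zero row and column sums is affinely equivalent to the $d=n/2$ slice of the regular 0/1 model, everything else is a one-line kernel computation. The only point worth double-checking is that the affine bijection indeed preserves uniformity (which is automatic) and that the hypothesis on $d$ in \Cref{thm_main} is comfortably satisfied at $d=n/2$.
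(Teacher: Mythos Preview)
Your proof is correct and is essentially the same as the paper's: both use the affine bijection $\Xi_0 = 2M - \ones\ones^\tran$ with $M$ uniform on $\mM_{n,n/2}$, apply \Cref{thm_main} at $d=n/2$, and then do the same one-line kernel computation (the paper phrases it as $x\in\ker(\Xi_0)\iff x_0\in\ker(M)$ with $x_0=x-\bar{x}\ones$, which is exactly your $x-\alpha\ones$).
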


\begin{proof}
Let 
$M=\frac12\big(\Xi_0+\ones\ones^\tran\big).$
Then $M$ is an rrd matrix with $d=n/2$.
For $x\in \R^n$, write
$$x=\overline{x}\ones + x_0$$
with $\overline{x}=\frac1n\sum_ix(i)$ and $x_0$ the orthogonal projection of $x$ to $\langle \ones\rangle^\perp$, the space of mean-zero vectors. 
One then verifies that $x\in \ker(\Xi_0)$ if and only if $x_0\in \ker(M)$, and the result follows from Theorem \ref{thm_main}.
\end{proof}

Our next result concerns \emph{signed rrd matrices}.
Let $\mM^\pm_{n,d}$ denote the set of $n\times n$ matrices $M_\pm$ with entries in $\set{+1,0,-1}$ satisfying the constraints 
\begin{equation}	\label{pmdef}
d= \sum_{i=1}^n \big|M_\pm(i,k)\big| = \sum_{j=1}^n \big|M_\pm (k, j)\big|
\end{equation}
for all $k\in [n]$. 
We have the following analogue of Theorem \ref{thm_main}:

\begin{theorem}[Signed rrd matrices are invertible a.a.s.]
\label{thm_pm}
Assume 
$C\log^2n\le d\le n$ for a sufficiently large constant $C>0$, and let $M_{\pm}$ be a uniform random element of $\mM^{\pm}_{n,d}$. 
Then
\begin{equation}	\label{pmbound}
\pr\big( \det(M_\pm) = 0\big) = O(d^{-1/4}).
\end{equation}
\end{theorem}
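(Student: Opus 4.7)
The plan is to reduce Theorem \ref{thm_pm} to the Hadamard-product singularity theorem announced in the abstract, which asserts a.a.s.\ invertibility of $\Sigma\odot \Xi$ for $\Xi$ a matrix of iid $\pm 1$ signs whenever the 0/1 matrix $\Sigma$ is the adjacency matrix of a digraph with appropriate expansion.

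My first step is a distributional identity:
\[
M_\pm \,\eqd\, M \odot \Xi,
\]
where $M$ is a uniform element of $\mM_{n,d}$ and $\Xi$ is an independent $n\times n$ matrix of iid uniform $\pm 1$ signs. Indeed, each element of $\mM^{\pm}_{n,d}$ is uniquely encoded by its support pattern (which ranges over $\mM_{n,d}$, since the constraints \eqref{pmdef} depend only on absolute values) together with an arbitrary sign assignment on the $nd$ nonzero entries. Since all $2^{nd}$ sign assignments yield valid elements of $\mM^{\pm}_{n,d}$, the product distribution of a uniform pattern and an independent iid sign mask matches the uniform law on $\mM^{\pm}_{n,d}$.

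Given this identification, I would condition on $M$ and invoke the Hadamard product theorem. Let $\good$ denote the event that $M$, viewed as a 0/1 matrix, satisfies the expansion hypotheses of that theorem. On $\good$, the Hadamard theorem yields $\pr\big(\det(M\odot\Xi)=0 \,\big|\, M\big)=O(d^{-1/4})$, while the complementary event $\good^c$ will be shown to have probability $o(d^{-1/4})$ using the discrepancy estimates of \cite{Cook:discrep}. The hypothesis $d=\omega(\log^2n)$ is essentially the threshold at which those concentration statements kick in with enough slack to survive a union bound over all vertex subsets; a typical rrd matrix is therefore an excellent expander. A final union bound over the two regimes gives the conclusion.

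The main obstacle I anticipate lies not in the distributional identity (which is immediate) but in checking the expansion hypotheses on $\good$. Concretely, I expect those hypotheses to include both uniform lower bounds on the supports of linear combinations of columns of $M$, and quantitative discrepancy bounds on the number of edges between moderately-sized vertex subsets, matching the expected $d|S||T|/n$ up to lower order. These are precisely the estimates produced in \cite{Cook:discrep} via switching and coupling arguments, and the logarithmic loss in their statements is exactly what forces $d\gg \log^2 n$. The proof of the Hadamard theorem itself presumably proceeds by a Koml\'os-style column exposure combined with the Erd\H os--Littlewood--Offord inequality applied to the conditionally-iid $\pm 1$ entries of each column $C_j$ supported on $S_j:=\supp(M_{\cdot,j})$; expansion of the digraph is then used to control the compressible null-vector regime, where the Littlewood--Offord bound degrades. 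Those details, however, belong to the proof of that separate theorem rather than to the short reduction needed for Theorem \ref{thm_pm}.
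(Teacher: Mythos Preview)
Your proposal is correct and follows essentially the same route as the paper: establish the distributional identity $M_\pm \eqd M\odot \Xi$, then deduce Theorem~\ref{thm_pm} from Theorem~\ref{thm_gen} by verifying that the rrd matrix $M$ lies in the expansion event $\good(d)$ with overwhelming probability via the discrepancy results of \cite{Cook:discrep}. One small correction: the hypothesis $d=\omega(\log^2 n)$ is inherited from Theorem~\ref{thm_gen} itself rather than from the discrepancy bounds used to verify $\good(d)$, which only need $\min(d,n-d)=\omega(\log n)$.
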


Note that in contrast to Theorem \ref{thm_main}, in the above result we don't need to assume an upper bound on $d$ (apart from the trivial one). 
This is because of the additional randomness of the Bernoulli signs in $\Xi$: as $d$ approaches $n$, $M$ approaches the non-random (and singular) matrix of all 1s, while $M_{\pm}$ approaches an iid sign matrix.
In particular, by considering $d=n$ we see that Theorem \ref{thm_pm} is a generalization of Koml\' os' Theorem \ref{thm_komlos}, up to a small loss in the exponent of $n$ from the bound \eqref{komlosbd}.

The signed rrd matrix $M_\pm$ is easier to work with than the unsigned rrd matrix $M$ due to the following alternative description.
Letting $M$ be the rrd matrix with entries
\begin{equation}	\label{unsigned}
M(i,j) := |M_\pm(i,j)|
\end{equation}
we have $M_\pm \eqd M\schur \Xi$, where $\Xi$ is an iid sign matrix independent of $M$. 
(Here $\schur$ denotes the Hadamard (or Schur) product, so that $M_\pm(i,j)=M(i,j)\Xi(i,j)$ for each $i,j\in [n]$.)
We refer to $M$ as the ``base" or ``support" of the signed rrd matrix $M_\pm$.
Roughly speaking, our approach to proving Theorem \ref{thm_pm} will be to condition on a ``good" realization of the base rrd matrix $M$ and proceed using only the randomness of the iid signs. 
We will then have to show that such good configurations $M$ occur with high probability. 

The conditions of a good configuration are most naturally stated in terms of the $d$-regular digraph $\Gamma=(V,E)$ which has $M$ as its adjacency matrix.
We identify $V$ with $[n]$, and $E\subset [n]^2$ is such that for all $i,j\in [n]$,
$M(i,j) = 1_E(i,j).$
We associate row and column indices with vertices of $\Gamma$.
For $i\in [n]$, let
\begin{equation}		\label{defnb1}
\mN_M(i) := \big\{j\in [n]: M(i,j)=1\big\}
\end{equation}
so that $\mN_M(i)$ and $\mN_{M^\tran}(i)$ are the out- and in-neighborhoods of the vertex $i$, respectively, in $\Gamma$. 
For $S\subset [n]$ we denote
\begin{equation}	\label{defnb2}
\mN_M(S) := \bigcup_{i\in S}\mN_M(i).
\end{equation}
For $A,B\subset [n]$ we let
\begin{align}
e_M(A,B) &:= \big| (A\times B) \cap E\big| \notag\\
&= \sum_{i\in A} \sum_{j\in B} M(i,j)		\label{defedge}
\end{align}
count the number of directed edges passing from $A$ to $B$. 

Roughly speaking, the base matrix $M$ is a good configuration if the associated digraph satisfies certain \emph{expansion} properties. 
In Section \ref{sec_discrepancy} we prove that the rrd matrix $M$ satisfies all of the necessary properties with overwhelming probability if 
$d=\omega(\log n)$.
The proofs rely on sharp tail bounds for the edge counts $e_M(A,B)$, which were proved in \cite{Cook:discrep}.
The proof of Theorem \ref{thm_main} for the unsigned rrd matrix $M$ will rely more heavily on the expansion properties of Section \ref{sec_discrepancy}.

It turns out that to prove Theorem \ref{thm_pm} by this approach we do not need all of the expansion properties enjoyed by $M$.
Hence, we will actually prove the following result, where $M$ is replaced by a general 0/1 matrix $\Sigma$, and the event $\good(d)$ distills the required expansion properties.
In particular, Theorem \ref{thm_gen} is independent of the results in \cite{Cook:discrep}.

\begin{theorem}[0/$\pm$1 matrices with expanding support are invertible a.a.s.]
\label{thm_gen}
Let $\Sigma$ be a random or deterministic 0/1 matrix.
For $d\in [n]$, let $\good(d)$ be the event that $\Sigma$ enjoys the following \emph{expansion properties} with constants $c_1,c_2,C_2>0$ and a parameter $\kappa_3\ge 1$:
\begin{enumerate}[start=0]
\item (Minimum degree) Every row and column of $\Sigma$ has at least $d$ nonzero entries. That is, for all $i\in [n]$,
\begin{equation}	\label{deglb}
|\mN_\Sigma(i)|,\, |\mN_{\Sigma^\tran}(i)| \ge d.
\end{equation}
\item (Expansion of small sets) There is some constant 
$c_1>0$ such that for all $\gamma\in (0,c_1]$, 
for all $S\subset [n]$ such that $|S|\le \frac{\log n}{2\gamma }\frac{n}{d}$, we have
$$|\mN_\Sigma(S)|,\, |\mN_{\Sigma^\tran}(S)| \ge \frac{\gamma}{\log n}d|S|.$$
\item (No large sparse minors) There are constants 
$C_2,c_2>0$ 
such that for all $A,B\subset[n]$ satisfying
$|A|,|B|\ge C_2\frac{n}{d}\log n$
we have
$$e_\Sigma(A,B) \ge c_2\frac{d}{n}|A||B|.$$
\item (No thin dense minors) There is a parameter 
$\kappa_3\in [1,\infty)$, 
possibly depending on $n$, such that for any $S,B\subset[n]$,
$$e_\Sigma(S,B),\, e_{\Sigma}(B,S) \le \kappa_3d|S|.$$
(In particular, taking $S$ to be a singleton we have the degree bounds 
$$|\mN_\Sigma(i)|,|\mN_{\Sigma^\tran}(i)| \le \kappa_3d$$
to complement \eqref{deglb} above.)
\end{enumerate}
Let $\Xi $ be an iid sign matrix independent of $\Sigma$, and put $H=\Sigma\schur\Xi$.
There is a constant $C_0'>0$ depending on $c_1,c_2,C_2$ such that if $d\ge C_0'\log^2n$,
then 
\begin{equation}	\label{genbound}
\pr\big(\set{\det(H)=0} \wedge \good(d)\big) = O(\kappa_3d^{-1/4}).
\end{equation}
In particular, if properties (0)-(3) hold a.a.s.\ for $\Sigma$ with 
$d\ge C_0'\log^2n$
and $\kappa_3=o(d^{1/4})$, then $H$ is invertible 
a.a.s.
\end{theorem}

In Section \ref{sec_discrepancy} we will show that with $\Sigma=M$ the rrd matrix, the event $\good(d)$ holds with overwhelming probability 
for some constants $c_1,c_2,C_2>0$ (we can take $\kappa_3=1$ in this case) assuming $d=\omega(\log n)$, at which point Theorem \ref{thm_pm} follows from Theorem \ref{thm_gen}.

The proof of Theorem \ref{thm_gen} will follow the general outline of Theorem \ref{thm_main}, but each stage will be easier due to the independence of the entries of $\Xi$. 
Hence, we believe it will benefit the reader to first see the arguments for $H$ as warmups to the more complicated arguments involving couplings for the rrd matrix $M$, and have structured the paper accordingly.
However, nothing from the proof of Theorem \ref{thm_gen} is needed for the proof of Theorem \ref{thm_main}, so the reader who is only interested in the proof of the main theorem can skip the sections devoted to $H$ (namely, Sections \ref{sec_pm} and \ref{sec_structpm}).

\subsection{The general strategy}
\label{sec_strategy}

Now we give a high level discussion 
of our couplings approach to proving invertibility of an rrd matrix $M$.
The strategy is similar in spirit to the one used by Rudelson and Vershynin in the recent work \cite{RuVe:pert} on the least singular value of perturbations of deterministic matrices by Haar unitary or orthogonal matrices. 
As the rrd matrix $M$ has discrete distribution, the couplings we define will be of a very different nature from the ones considered in that work.
Nevertheless, on a conceptual level at least, we cannot overvalue the influence \cite{RuVe:pert} has had on our approach for dealing with dependent random variables.

In order to improve on the strategy of conditioning on an iid Bernoulli($p$) matrix $M_p$ as in \eqref{eventd}, we would like to show that the events 
$\set{\det(M_p)=0}$ and $\set{M_p\in \mM_{n,d}}$
are approximately independent in some sense. 
Indeed, proceeding as in \eqref{comparebd} gives
\begin{align}
\pr(\,\det(M)=0\,) &= \pro{\, \det(M_p)=0 \,\big|\, M_p\in \mM_{n,d} \,}\notag \\
&\le \frac{\pro{ \det(M_p)=0}}{\pro{M_p\in \mM_{n,d}}} \label{worstcase}
\end{align}
which is only sharp for the worst case that we have the containment
\begin{equation}	\label{farfrom}
\set{\det(M_p)=0} \subset \set{M_p\in \mM_{n,d}}.
\end{equation}
Of course, \eqref{farfrom} is likely far from the truth.
This motivates us to better understand the structure of the set $\mM_{n,d}$; specifically, we try to identify \emph{symmetries} of this set.
If we can identify a large class of operations $\Phi: \mM_{n,d}\rightarrow \mM_{n,d}$ which leave the distribution of a uniform random element $M\in \mM_{n,d}$ invariant, then we could select such an operation $\Phi $ \emph{at random} from this class and form a new rrd matrix
$\tM= \Phi (M).$
Now to bound the event that some property $P$ holds for $M$, we may replace $M$ with $\tM$:
\begin{align*}
\pro{\,\mbox{$P$ holds for $M$}\,} &= \pro{\,\mbox{$P$ holds for $\tM$}\,}\\
&=\e \pro{\,\mbox{$P$ holds for $\tM$}\,\big|\, M}
\end{align*}
and proceed to bound the inner probability using only the randomness we have ``injected" via the map $\Phi $.
This approach can be very powerful if we can design the map $\Phi $ to involve a large number of independent random variables.

This strategy was used in \cite{RuVe:pert} to obtain bounds of the form
\begin{equation}	\label{rvbound}
\pro{ \sigma_n(D+U) \le t} \ll t^cn^C
\end{equation}
for some absolute constants $C,c>0$, where $D$ is a deterministic matrix (satisfying some additional hypotheses) and $U$ is a Haar-distributed unitary or orthogonal matrix. 
Since the random matrices in this case are drawn from a group, there is no shortage of symmetries to consider for injecting independence.
Furthermore, the availability of \emph{continuous} symmetries allowed for the injection of random variables possessing smooth bounded density (such as iid Gaussians).
This gave quick access to anti-concentration or ``small ball" estimates, which play a fundamental role in all currently known (to this author at least) proofs of invertibility for random matrices.

The bound \eqref{rvbound} had implications for the Single Ring Theorem, proved by Guionnet, Krishnapur and Zeitouni in \cite{GKZ}, for the limiting spectral distribution of certain random matrices with prescribed singular values; specifically, it was shown that a hypothesis in \cite{GKZ} could be disposed of.
It was also used in the proof by Basak and Dembo in \cite{BaDe} of the limiting spectral distribution for the sum of a fixed number of independent Haar unitary or orthogonal matrices. 
It was conjectured in \cite{BoCh} that the same law should hold for the random matrix
$$M_{\Perm}:=P_1+\cdots+P_d$$
where the summands are iid uniform $n\times n$ permutation matrices --
this can be viewed as a sparse version of the rrd matrix $M$.
It is possible that the least singular value of $M_{\Perm}$ could be controlled by an extension of the ideas used in the present work, though the extreme sparsity of this matrix will likely call for new ideas.

The present setting of rrd matrices is a little more complicated than the case of Haar unitaries as the distribution is discrete, and $\mM_{n,d}$ is not a group (except when $d=1$ of course, but then the problem is trivial).
The basic building block for our coupled pairs $(M,\tM)$ will be the well-known ``simple switching" operation: 
letting
\begin{equation}\label{IJ0}
\eye:=
\begin{pmatrix}
1& 0 \\
0 & 1  \end{pmatrix}, 
\hspace{.7cm}
\jay:=
\begin{pmatrix}
0& 1 \\
1 & 0  \end{pmatrix}
\end{equation}
we can replace a $2\times2$ minor of $M$ by $\eye$ if it is $\jay$ and $\jay$ if it is $\eye$ -- indeed, note that this preserves the row and column sums. If $i_1,i_2$ and $j_1,j_2$ are the row and column indices, respectively, of such a minor, then
in the associated digraph $\Gamma=(V,E)$ we are alternating between the following edge configurations at vertices $i_1,i_2,j_1,j_2$:
\begin{center}
  \includegraphics[width=80mm]{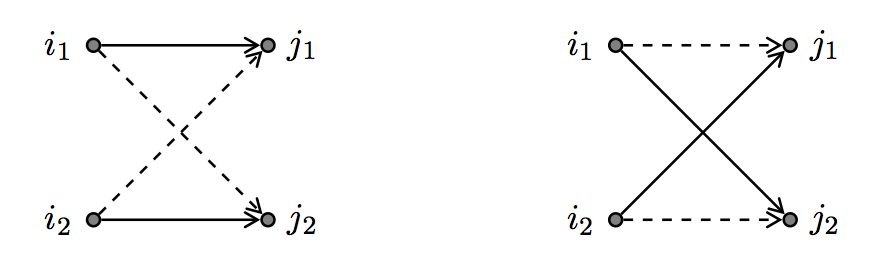}
\end{center}
where we use solid arrows to depict directed edges, and dashed arrows to indicate the absence of a directed edge (i.e.\ ``non-edges"). This forms the basis for (a simple instance of) what is known as the \emph{switching method}, which has been a successful tool in the study of random regular graphs since its introduction by McKay in \cite{McKay2}; see also section 2.4 of the survey \cite{Wormald}. 

We will want to form $\tM$ by applying several switchings at non-overlapping $2\times 2$ minors. 
Each minor is replaced with $\eye$ or $\jay$ uniformly at random, independently of all other switchings. 
We can encode the outcomes of the random switchings with iid uniform signs -- this will give us access to anti-concentration estimates for random walks (specifically Erd\H os' Theorem \ref{thm_erdos} below).
The formal construction, called the ``shuffling coupling", is given in Section \ref{sec_shuffling}.

\subsection{Organization of the paper}
\label{sec_organization}

The rest of the paper is organized as follows. 
In Section \ref{sec_ideas} we describe the ideas of the proof in more detail, reviewing the approach introduced by Koml\'os to classify potential null vectors as structured and unstructured, and illustrating our use of couplings by solving a toy problem.
Section \ref{sec_prelim} gives the formal statements and proofs for the tools that were motivated in Section \ref{sec_ideas}, namely the ``shuffling" coupling of rrd matrices, discrepancy properties for random regular digraphs (including results from \cite{Cook:discrep}), and a concentration inequality for the symmetric group due to Chatterjee.
After establishing the discrepancy properties for $M$ in Section \ref{sec_discrepancy}, we will deduce Theorem \ref{thm_pm} from Theorem \ref{thm_gen}.
In Sections \ref{sec_unstruct} and \ref{sec_struct} we bound the events that our random matrices have ``unstructured" and ``structured" null vectors, respectively -- 
for the signed matrix $H$ ``structured" will mean ``sparse", while for the rrd matrix $M$ it will mean that the null vector has a large level set.
(Note that in the recent literature on invertibility of iid matrices the term ``structured" is used for vectors whose components lie in a set that is well-approximated in some sense by a \emph{generalized arithmetic progression} -- see \cite{TaVu:sing, TaVu:cond, RuVe}.)
In each section, we first treat the signed matrix $H$ as a warmup to the more complicated arguments for $M$.
However, the reader who is only interested in the proof of the main theorem concerning the rrd matrix $M$ can skip Sections \ref{sec_pm} and \ref{sec_structpm}.

\subsection{Notation}
\label{sec_notation}

We make use of the following asymptotic notation with respect to the limit $n\rightarrow \infty$ (though the proof can easily be made effective).
$f\ll g$, $g\gg f$, $f=O(g)$, and $g=\Omega(f)$ are synonymous to the statement that $|f|\le Cg$ for all $n\ge C$ for some absolute constant $C$. $f\asymp g$ and $f=\Theta(g)$ mean $f\ll g$ and $f\gg g$.
$f=o(g)$ and $g=\omega(f)$ mean that $f/g\rightarrow 0$ as $n$ tends to infinity. 
Dependence of the implied constant on a parameter $\alpha$ is indicated with a subscript: $f=O_\alpha(g), f\ll_\alpha g$, 
etc.
$C,c, c'$, etc.\ are used to denote various unspecified absolute constants, and their values may change from line to line.
Some distinguished constants from the statements of Theorems and Propositions (such as $C_0, c_0$ in Theorem \ref{thm_main}) have numbered subscripts so that they can be more easily tracked through the arguments. 
We still allow hidden constants in asymptotic notation to depend on these numbered constants. 

Most events will be denoted by the letters $\event, \bad,$ and $\good$, where the latter two denote ``bad" and ``good" events, respectively. 
Their meaning may vary from proof to proof, but will remain fixed for the duration of each proof. 
$\un_\event$ denotes the indicator random variable for the event $\event$, and for a statement $P$, 
$\un(P):= \un_{\set{P \text{ holds}}}$.
$\e_X$ and $\pr_X$ denote expectation and probability, respectively, conditional on all random variables but $X$.

We make use of the following terminology for sequences of events. 
\begin{definition}[Frequent events]
\label{def_frequent}
An event $\event$ depending on $n$ holds
\begin{itemize}
\item \emph{asymptotically almost surely} (a.a.s.) if $\pr(\event) = 1-o(1)$, 
\item \emph{with high probability} (w.h.p.) if $\pr(\event) = 1-O(n^{-c})$ for some absolute constant $c>0$,
\item \emph{with overwhelming probability} (w.o.p.) if $\pr(\event) = 1- O_C(n^{-C})$ for any constant $C>0$.
\end{itemize}
\end{definition}

Given ordered tuples of row and column indices $(i_1,\dots, i_a)$ and $(j_1,\dots, j_b)$, we denote by 
$M_{(i_1,\dots, i_a)\times(j_1,\dots, j_b)}$
the $a\times b$ matrix with $(k,l)$ entry equal to the $(i_k,j_l)$ entry of $M$. 
(Note for instance that the sequence $(i_1,\dots,i_a)$ need not be increasing.) 
For $A,B\subset [n]$, with $M_{A\times B}$ the increasing ordering of the elements of $A,B$ is implied.
We also recall from \eqref{IJ0} our notation
\begin{equation}	\label{IJ}
\eye:=
\begin{pmatrix}
1& 0 \\
0 & 1  \end{pmatrix}, 
\hspace{.7cm}
\jay:=
\begin{pmatrix}
0& 1 \\
1 & 0  \end{pmatrix}.
\end{equation}
We use $\circ$ to denote the matrix Hadamard product; that is, for $n\times m$ matrices $M_1, M_2$, $M_1\circ M_2$ is the $n\times m$ matrix with entries $M_1(i,j)M_2(i,j)$. 
We will use this notation for row vectors as well (which is the case $n=1$).

We use notation that views a vector $x\in \R^n$ as a function
$x:[n]\rightarrow\R$. 
In particular, the $i$th component of $x$ is denoted $x(i)$. 
We also define the support of $x$ as
$$\spt(x):=\big\{i\in [n]:x(i)\ne 0\big\} = [n]\setminus x^{-1}(0).$$
We write $\R^T\subset \R^n$ for the subspace of vectors supported on $T\subset[n]$. 
We let $\ones$ denote the all-ones vector $(1\etc 1)\in \R^n$. 
``Null vector" will mean ``right null vector" unless otherwise stated.
The span of a single vector $x$ is denoted by $\langle x\rangle$.

As noted in Section \ref{sec_main}, it will be convenient to use some terminology reflecting the association of $M$ with a $d$-regular digraph $\Gamma=(V,E)$. 
In addition to the notation \eqref{defnb1}--\eqref{defedge}, 
for distinct row indices $i_1,i_2\in [n]$ we define the sets of column indices
\begin{align}
\Co_M(i_1,i_2) &= \mN_M(i_1)\cap \mN_M(i_2) = \set{ j\in [n]: M_{(i_1,i_2)\times j} = {1\choose 1}}	\label{codef}\\
\Ex_M(i_1,i_2) &= \mN_M(i_1)\setminus \mN_M(i_2) = \set{ j\in [n]: M_{(i_1,i_2)\times j} = {1\choose 0}}	\label{exdef}
\end{align}
so that
\begin{equation}
\Ex_M(i_2,i_1) = \set{ j\in [n]: M_{(i_1,i_2)\times j} = {0\choose 1}}.
\end{equation}
These three sets partition the vertex-pair neighborhood $\mN_M(\set{i_1,i_2})$. 
We denote their cardinalities with lower case: 
$\co_M(i_1,i_2)= |\Co_M(i_1,i_2)|, \; \ex_M(i_1,i_2)= |\Ex_M(i_1,i_2)|$,
the first of these being the usual (out-)codegree of vertices $i_1,i_2$.

\section{Ideas of the proof}
\label{sec_ideas}

Our general approach to Theorem \ref{thm_main} is inspired by Koml\'os' proof of the analogous theorem for iid sign matrices. 
After briefly reviewing Koml\'os' argument below, we will discuss the new ideas that are necessary to treat rrd matrices. 

\subsection{The strategy of Koml\'os}
\label{sec_komlos}

A key ingredient of Koml\'os' proof is the following ``discrete small ball estimate" for random walks due to Erd\H os. 

\begin{theorem}[Anti-concentration for random walks \cite{Erdos}]
\label{thm_erdos}
Let $x\in \R^n$ be a fixed nonzero vector, and let $\xi:[n]\rightarrow\set{\pm1}$ be a sequence of iid uniform signs. 
Then
\begin{equation}
\sup_{a\in \R}\, \pr\bigg\{ \sum_{i=1}^nx(i)\xi(i) = a \bigg\} \ll |\spt(x)|^{-1/2}
\end{equation}
where we recall the notation $\spt(x)= \big\{i\in [n]:x(i)\ne 0\big\}$.
\end{theorem}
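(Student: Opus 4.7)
The plan is to follow Erd\H{o}s' classical argument via Sperner's theorem on antichains in the Boolean lattice. Set $k=|\spt(x)|$, and without loss of generality restrict the sum to $i\in \spt(x)$ (indices outside contribute $0$). My first step would be a sign-flipping reduction: for each $i\in\spt(x)$ with $x(i)<0$, replace $\xi(i)$ by $-\xi(i)$. Since $\xi$ is a vector of iid symmetric signs, this substitution preserves the joint law of $\xi$, while making all nonzero entries of $x$ strictly positive. So we may assume $x(i)>0$ for every $i\in\spt(x)$.

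Next, I would reparametrize in terms of subsets. Writing $\xi(i)=2\un_A(i)-1$ where $A=\{i\in\spt(x):\xi(i)=+1\}$, the random set $A$ is uniform in $2^{\spt(x)}$, and
\begin{equation}
\sum_{i=1}^n x(i)\xi(i) \;=\; 2\sum_{i\in A} x(i)\;-\;\sum_{i\in\spt(x)} x(i).
\end{equation}
Consequently, for any target $a\in\R$,
\begin{equation}
\pr\Bigl\{\sum_i x(i)\xi(i)=a\Bigr\} \;=\; 2^{-k}\,\bigl|\mathcal{F}_a\bigr|,
\qquad \mathcal{F}_a:=\Bigl\{A\subset \spt(x):\sum_{i\in A}x(i)=b\Bigr\},
\end{equation}
where $b=(a+\sum x(i))/2$. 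Thus the problem reduces to bounding $|\mathcal{F}_a|$ uniformly in $a$.

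The key combinatorial observation is that $\mathcal{F}_a$ is an antichain in the Boolean lattice on $\spt(x)$: if $A\subsetneq B$ belonged to $\mathcal{F}_a$, then strict positivity of the $x(i)$'s on $\spt(x)$ would force $\sum_{i\in B}x(i)>\sum_{i\in A}x(i)$, a contradiction. By Sperner's theorem, any antichain in $2^{[k]}$ has size at most $\binom{k}{\lfloor k/2\rfloor}$. Combining this with Stirling's approximation $\binom{k}{\lfloor k/2\rfloor}=O(2^k/\sqrt{k})$ gives
\begin{equation}
\pr\Bigl\{\sum_i x(i)\xi(i)=a\Bigr\}\;\le\; 2^{-k}\binom{k}{\lfloor k/2\rfloor}\;\ll\; k^{-1/2},
\end{equation}
which is the desired bound after taking the supremum over $a$.

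There is no real obstacle here: the proof is short and classical, with the sign-flipping trick and Sperner's theorem being the only substantive ingredients. The one point requiring a little care is the initial reduction to the strictly positive case, since preservation of the joint law of $\xi$ under coordinate-wise sign flips is what makes Sperner's theorem applicable; aside from this, the argument is a direct two-line combinatorial computation.
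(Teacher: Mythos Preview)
Your proof is correct and is precisely Erd\H{o}s' original argument via Sperner's theorem. Note, however, that the paper does not actually prove this statement: Theorem~\ref{thm_erdos} is quoted as a known result with a citation to \cite{Erdos} and used as a black box throughout. So there is no ``paper's own proof'' to compare against; your write-up simply supplies the classical proof that the paper omits.
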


\begin{proof}{\emph{of Theorem \ref{thm_komlos}}}.
We want to bound the bad event
\begin{equation}
\bad := \big\{ \det(\Xi )= 0 \big\} = \big\{ \exists \mbox{ nonzero } x\in \R^n: \, \Xi\, x=0\big\}.
\end{equation}
The idea is to separately consider the possibility of ``structured" and ``unstructured" null vectors $x$. 
Here the right notion of structure is \emph{sparsity}.
Say that $x\in \R^n$ is $k$-sparse if $|\spt(x)|\le k$. 

\begin{proposition}[No structured null vectors for $\Xi $]
\label{prop_structber}
For any fixed $\eta\in (0,1)$, with overwhelming probability $\Xi $ has no nontrivial $(1-\eta)n$-sparse null vectors. 
\end{proposition}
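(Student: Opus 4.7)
My plan is a union bound over the supports $S\subset[n]$ of a potential sparse null vector, combined with an Odlyzko-type per-support bound. Observe that a nontrivial null vector $x$ with $|\spt(x)|=k$ exists if and only if for some $S\subset[n]$ with $|S|=k$ the $n\times k$ submatrix $\Xi_S$ (consisting of the columns of $\Xi$ indexed by $S$) fails to have full column rank.

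For a fixed $S$ with $|S|=k$, I would expose the $k$ columns of $\Xi_S$ one at a time. Given that the first $j-1$ columns are linearly independent and span a subspace $V_{j-1}\subset\R^n$ of dimension $j-1$, Odlyzko's lemma---the probability that a uniform random $\pm 1$ vector lies in a fixed subspace of dimension $r$ is at most $2^{-(n-r)}$---gives $\pr(\Xi_{\cdot,j}\in V_{j-1})\le 2^{-(n-j+1)}$. Chaining over $j=1,\dots,k$ yields $\pr(\Xi_S \text{ not of full column rank})\le k\cdot 2^{-(n-k+1)}\le k\cdot 2^{-\eta n}$, so that a union bound over $S$ gives
\[
\pr(\bad)\;\le\; \sum_{k=1}^{\lfloor(1-\eta)n\rfloor} \binom{n}{k}\, k\cdot 2^{-(n-k+1)}.
\]
For $k$ small compared to $n$---the regime most relevant to Koml\'os' dichotomy, where ``structured'' essentially means ``very sparse''---the entropy estimate $\binom{n}{k}\le (en/k)^k$ beats the $2^{-(n-k)}$ factor and each summand is super-polynomially small, which already suffices to conclude in that part of the sum.

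The main obstacle will be the intermediate regime where $k$ is a constant fraction of $n$ (i.e.\ $\eta$ not close to $1$), since there $\binom{n}{k}$ is exponential in $n$ and Odlyzko's bound alone is too weak. In that regime I would sharpen the per-column estimate using Erd\H os' small-ball bound: $V_{j-1}^\perp$ has dimension $\ge n-k+1\ge \eta n$, and any subspace of $\R^n$ of dimension $\ge \eta n$ necessarily contains a vector $v$ with $|\spt(v)|\ge \eta n$, so \Cref{thm_erdos} applied to such a $v\in V_{j-1}^\perp$ upgrades the per-column probability to $\pr(\Xi_{\cdot,j}\in V_{j-1})\ll (\eta n)^{-1/2}$. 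Tensorizing this anti-concentration across the $n$ rows---via an $\epsilon$-net on the unit sphere in $\R^S$ of cardinality $(3/\epsilon)^k$, combined with the standard sub-Gaussian concentration of $\|\Xi y\|_2$ around $\sqrt n$ for fixed unit $y$ and an operator-norm bound $\|\Xi\|_{\mathrm{op}}\ll\sqrt n$ to control the net error---should produce an exponential-type per-support bound of the form $\exp(-c(\eta)n)$, enough to overcome the binomial entropy $\binom{n}{k}$ and close the union bound for $\eta$ bounded away from $0$.
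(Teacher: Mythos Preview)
Your Odlyzko step is sound but covers only a limited range: the union bound $\sum_k\binom{n}{k}\,k\,2^{-(n-k+1)}$ is summable only when $1-\eta$ stays below the root $c^*\approx 0.23$ of $H(c)=1-c$, so it does not handle small $\eta$. The genuine gap is in your proposed fix for the remaining range. The $\epsilon$-net scheme you sketch yields, for each support $S$ of size $k$, a per-support bound of the form $(C/\epsilon)^k e^{-c_2 n}$, where $c_2>0$ comes from sub-Gaussian concentration of $\|\Xi y\|_2$ and is an \emph{absolute} constant---it does not improve as $\epsilon\to 0$ or as $k$ grows. Since $\epsilon$ is pinned at order one by the operator-norm step, summing over supports gives $(1+C/\epsilon)^n e^{-c_2 n}$, and there is no reason to expect $c_2>\log(1+C/\epsilon)$; the union bound blows up for small $\eta$. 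The stronger ``tensorized Erd\H os'' rate $(Ck^{-1/2})^n$ you mention is only available for the exact event $\{\Xi y=0\}$ with $y$ \emph{fixed}, and does not transfer to the small-ball event $\{\|\Xi y\|_2\le t\sqrt n\}$ one needs to pass through a net.

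The paper avoids the net by making the null vector deterministic before invoking anti-concentration. On $\event_k\setminus\event_{k-1}$, after paying $\binom{n}{k}$ to fix $\spt(x)=[k]$, the submatrix $\Xi_{[n]\times[k]}$ has rank exactly $k-1$; paying a further $\binom{n}{k-1}$ to place $k-1$ linearly independent rows first and conditioning on them \emph{fixes} $x$ up to scaling. One then applies Erd\H os' bound $\pr(R_i\cdot x=0)\ll k^{-1/2}$ independently to each of the $n-k+1$ remaining rows, obtaining $\pr(\event_k\setminus\event_{k-1})\ll\binom{n}{k}^2(Ck^{-1/2})^{n-k+1}$. Because the base $Ck^{-1/2}\to 0$, this is a super-exponential rate that beats the entropy $(e/\eta)^{2(n-k)}$ for every fixed $\eta>0$. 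Pinning down the null vector via a maximal independent set of rows is the idea your proposal is missing; it is what allows the $k^{-1/2}$ from Erd\H os to be compounded $n-k$ times, whereas a net only ever extracts a fixed exponential rate.
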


We defer the proof of this proposition to the end. 
Fix $\eta\in (0,1)$. 
We say that $x\in \R^n$ is ``structured" if $x$ is $(1-\eta)n$-sparse, and ``unstructured" otherwise.
Since $\Xi $ is identically distributed to its transpose, we may now restrict to the event $\good$ on which $\Xi $ has no structured left or right null vectors. 

For each $i\in [n]$, let $R_i$ denote the $i$th row of $\Xi $, and denote
$V_i= \Span(R_{i'}: i'\ne i).$
Define the events
$$\bad_i := \good \wedge \set{ R_i\in V_i}.$$
On $\bad\wedge \good$, $\Xi $ must have an unstructured left null vector, which implies that $\bad_i$ holds for at least $(1-\eta)n$ values of $i\in [n]$. 
By double counting we then have that
\begin{equation}
\sum_{i=1}^n \pr(\bad_i) \ge (1-\eta) n\pr(\bad\wedge \good).
\end{equation}
Since the rows of $\Xi $ are exchangeable, all of the summands on the left hand side are equal to $\pr(\bad_1)$, say, and so
\begin{equation}
\pr(\bad\wedge \good) \le \frac{1}{1-\eta} \pr(\bad_1).
\end{equation}
By our bound on $\pr(\good^c)$ from Proposition \ref{prop_structber}, it only remains to show that $\pr(\bad_1)\ll n^{-1/2}$.

We condition on the rows $R_2,\dots, R_n$ of $\Xi $, which fixes their span $V_1$.
Condition also on a unit normal vector $u\in V_1^\perp$, drawn independently of $R_1$. 
We have
\begin{equation}
\set{R_1\in V_1} \subset \set{R_1\cdot u = 0}.
\end{equation}
On $\bad_1$ we have that $u$ is perpendicular to every row of $\Xi $, and is hence a left null vector. 
By our restriction to $\good$ we may hence assume that $u$ is unstructured.
By Theorem \ref{thm_erdos} we may now use the randomness of $R_1$ to conclude the desired bound
$$\pro{ \bad_1} \le \pro{ \good\wedge \set{R_1\cdot u = 0}} \ll_\eta n^{-1/2}.$$

We turn to the proof of Proposition \ref{prop_structber}. 
Define the events
$$\event_k = \set{\exists \, x\in \R^n: \, 0<|\spt(x)|\le k, \, \Xi\, x=0}.$$
Our aim is to bound 
\begin{equation}
\pro{\event_{(1-\eta)n}} = \sum_{k=2}^{\lf (1-\eta)n\rf} \pro{ \event_k\setminus \event_{k-1}}
\end{equation}
(noting that $\event_1$ is empty).
It suffices to show that $\pr(\event_k\setminus \event_{k-1})$ is exponentially small for arbitrary fixed $2\le k\le (1-\eta)n$.

Fix $k$ in this range. 
On $\event_k\setminus \event_{k-1}$ there is a right null vector $x$ with exactly $k$ nonzero components. 
We may spend a factor ${n\choose k}$ to assume that $x$ is supported on $[k]$ (using column exchangeability). 
Now on the complement of $\event_{k-1}$, the first $k$ columns of $\Xi $ must span a space of dimension $k-1$.
It follows that there are $k-1$ linearly independent rows of the left $n\times k$ minor of $\Xi $. 
By row exchangeability we may spend another factor ${n\choose k-1}$ to assume the first $k-1$ rows are linearly independent. 
To summarize,
\begin{equation}	\label{line2}
\pro{\event_k\setminus \event_{k-1}}\le {n\choose k} {n\choose k-1} \pro{\event_k'}
\end{equation}
where 
$$\event_k':= \Big\{ \exists x\in \R^n: \, \Xi\, x=0, \, \spt(x)=[k], \, R_1,\dots, R_{k-1} \mbox{ are linearly independent}\Big\}.$$
Now note that by linear independence, on $\event_k'$ we have that $x$ is determined by the first $k-1$ rows. Conditioning on these rows fixes $x$. 
Then by the independence of the rows of $\Xi $ we have
\begin{align}
\pro{ \event_k' \,\big|\, R_1,\dots, R_{k-1}} &\le \pro{ R_i\cdot x=0 \; \forall i\in [k,n]} \notag\\
& = \pro{ R_n\cdot x = 0}^{n-k+1}.	\label{line1}
\end{align}
Since $|\spt(x)|=k$, by Theorem \ref{thm_erdos} we can bound
\begin{equation}
\pro{ R_n\cdot x = 0 } \le \min\big[ 1/2, O(k^{-1/2})\big].
\end{equation}
Combining this bound with \eqref{line1}, \eqref{line2} and the inequality 
${n\choose n- k}\le (en/(n-k))^{n-k}$ 
we conclude
\begin{equation}
\pro{\event_k\setminus \event_{k-1}} \ll \exp\bigg\{ (n-k) \left[ C+ 2\log \frac{n}{n-k} -\log \sqrt{k} \right] \bigg\}
\end{equation}
which is more than sufficiently small if $n-k\ge \eta n$ for any fixed $\eta\in (0,1)$ (in fact we can allow $n-k$ as small as 
$C' n^{-1/4}$ for a sufficiently large absolute constant $C'>0$).
\end{proof}

\subsection{Structured and unstructured null vectors}
\label{sec_divideandconquer}

It turns out that Proposition \ref{prop_structber} is robust under some zeroing out of the entries of $M$. 
Specifically, we can show an analogous result for the matrix $H=\Sigma\schur\Xi$ from Theorem \ref{thm_gen}.

\begin{proposition}[No structured null vectors for $H$]
\label{prop_structpm}
For $\eta\in (0,1]$, let $\good_\pm^{\sparse}(\eta)$ be the event that $H$ has no nontrivial $(1-\eta)n$-sparse left or right null vectors. 
With hypotheses as in Theorem \ref{thm_gen},
we have that on $\good(d)$ the event $\good_\pm^{\sparse}(\eta)$ holds 
with probability $1-O(n^{-100})$ if 
$\eta\in [C_1'd^{-1/4}, 1]$ for a sufficiently large absolute constant $C_1'>0$.
\end{proposition}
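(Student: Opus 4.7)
The plan is to adapt Koml\'os's argument from the iid case (as presented in the proof of \Cref{thm_komlos}) to the Hadamard product $H = \Sigma \odot \Xi$, conditioning throughout on a fixed realization of $\Sigma$ satisfying the deterministic event $\good(d)$. The key new difficulty is that $H$ has no row- or column-exchangeability, so the two ``spend'' steps in \eqref{line2} must be carried out as explicit union bounds, and the per-row anti-concentration afforded by \Cref{thm_erdos} varies from row to row according to the quantity $N_i(S) := |\mN_\Sigma(i) \cap S|$, where $S = \spt(x)$ is the support of a putative minimal-support null vector.

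First I would decompose $\good_\pm^{\sparse}(\eta)^c$ into events $\event_k \setminus \event_{k-1}$ indexed by $k = |\spt(x)|$ for $2 \le k \le (1-\eta)n$ (left null vectors being handled symmetrically by applying the same argument to $H^\tran = \Sigma^\tran \odot \Xi^\tran$, since $\good(d)$ is symmetric under transposition). For fixed $k$ I union bound over the support $S \subset [n]$ with $|S|=k$ (cost $\binom{n}{k}$) and over a subset $T \subset [n]$ with $|T|=k-1$ certifying that $H_{[n]\times S}$ has row rank $k-1$ (cost $\binom{n}{k-1}$). Conditioning on $H_{T, \cdot}$ determines $x$ up to scalar, and the remaining rows are independent given $\Sigma$, so \Cref{thm_erdos} applied to each random walk $H_{i,\cdot}\cdot x = \sum_{j \in \mN_\Sigma(i)} \Xi(i,j) x(j)$ yields
\begin{equation*}
\pro{H_{i,\cdot}\cdot x=0\ \forall\ i\notin T \,\big|\, H_{T,\cdot},\,\Sigma} \;\le\; \prod_{i\notin T} \min\!\big(1,\, C/\sqrt{N_i(S)}\,\big).
\end{equation*}

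The remaining task is to control this product using $\good(d)$. Note that properties (0) and (3) together force $\Sigma$ to be exactly $d$-regular, so $\sum_i N_i(S) = dk$, and $e_\Sigma(S, S^c) \le d\min(|S|, |S^c|)$; a routine truncation argument then gives that at most $O(\eta n)$ rows satisfy $N_i(S) < d/2$ when $|S| = (1-\eta)n$. For small $k$ (say $k \le c_1 (\log n)\,n/d$), property (1) produces $\Omega(dk/\log n)$ rows with $N_i(S) \ge 1$, yielding decay $C^{\Omega(dk/\log n)}$ per support, which easily beats $\binom{n}{k}^2 \le (en/k)^{2k}$ since $d = \omega(\log^2 n)$. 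For intermediate and large $k$, for a favorable choice of $T$ the product over $T^c$ is of order $\big(C/\sqrt{d}\big)^{(1-O(\eta))n}$, and combined with the binomial cost $\binom{n}{k}^2 \le (e/\eta)^{O(\eta n)}$ gives $\big(Ce^2/(\eta^2\sqrt{d})\big)^{\Omega(\eta n)}$, which is $O_C(n^{-C})$ precisely when $\eta \ge C_0 d^{-1/4}$ --- this is where the exponent $1/4$ enters.

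The main obstacle is the endpoint regime $k\approx (1-\eta)n$ with $\eta$ as small as $C_0 d^{-1/4}$: here the naive adversarial choice of $T$ could absorb all $O(\eta n)$ ``bad'' rows (those with $N_i(S) < d/2$) into $T$, leaving $T^c$ entirely bad and destroying the per-row decay. To resolve this I would exploit the rank inequality $\rank(H_{G\times S}) \ge (k-1) - |G^c|$, where $G := \{i : N_i(S) \ge d/2\}$, obtained from the subadditivity $\rank(H_{[n]\times S}) \le \rank(H_{G\times S}) + |G^c|$. This forces any valid rank-$(k-1)$ witness $T$ to contain at least $(k-1) - O(\eta n)$ rows from $G$; a pigeonhole argument then guarantees that $T^c$ retains $\Omega(\eta n)$ rows from $G$, on which the anti-concentration product contributes the $(C/\sqrt{d})^{\Omega(\eta n)}$ decay required to close the argument at the threshold $\eta \ge C_0 d^{-1/4}$.
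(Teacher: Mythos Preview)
Your overall strategy---decomposing into $\event_k\setminus\event_{k-1}$, union bounding over the support $S$ and a rank-$(k-1)$ witness $T$, conditioning on $H_{T,\cdot}$ to fix $x$, then using row independence and \Cref{thm_erdos}---is exactly the paper's, and your treatment of small $k$ via property~(1) is correct.

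The argument breaks down for intermediate and large $k$ because you never invoke property~(2) of $\good(d)$.  Your Markov bound from $d$-regularity gives only $|G^c|\le 2(n-k)$ rows with $N_i(S)<d/2$; since $|T^c|=n-k+1$, this allows $T^c\subset G^c$ for \emph{every} $k$, so the product $\prod_{i\in T^c}\min(1,C/\sqrt{N_i(S)})$ can be $1$ for an adversarial $T$ in the union bound.  (Your claimed exponent $(1-O(\eta))n$ is also off: the product has only $|T^c|=n-k+1$ factors, and your binomial estimate $\binom{n}{k}^2\le(e/\eta)^{O(\eta n)}$ is valid only near the endpoint, not for $k\sim n/2$ where $\binom{n}{k}^2\sim 4^n$.)  The paper instead uses property~(2) through an auxiliary lemma (\Cref{lem_Aeps}): for $|S|\ge \frac{1}{2\gamma}\frac{n\log n}{d}$ one has $|\{i:N_i(S)<c_2 p|S|\}|=O(p^{-1}\log n)$.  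Since $p^{-1}\log n=o(n-k)$ throughout this range (from $d=\omega(\log^2 n)$ and $\eta\ge C_0 d^{-1/4}$), the bad rows are negligible compared with $|T^c|$ \emph{uniformly over all $T$}, and no favorable choice is needed.

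Your rank argument does not close the gap.  The inequality $|T\cap G|\ge (k-1)-|G^c|$ is just $|T\cap G^c|\le|G^c|$, which is trivial and yields no lower bound on $|T^c\cap G|$.  To force $T^c\subset G$ one would need $T\supset G^c$ while retaining $\rank(H_{T\times S})=k-1$; this is impossible whenever $\rank(H_{G^c\times S})<|G^c|$, in which case the optimal $T$ still leaves $|G^c|-\rank(H_{G^c\times S})$ bad rows in $T^c$, potentially all of $T^c$.  Moreover, any such data-dependent selection of $T$ depends on $\Xi$, which spoils the conditioning that was supposed to leave the rows in $T^c$ fresh.  The clean fix is to use property~(2) as the paper does; this gives the endpoint bound $(C/\sqrt{d})^{(n-k)(1-o(1))}\cdot\binom{n}{k}^2\le (e/(\eta d^{1/4}))^{2\eta n}$, which is exactly where the threshold $\eta\ge C_0 d^{-1/4}$ appears.
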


As for the rrd matrix $M$ and Theorem \ref{thm_main}, we will also treat structured null vectors separately, but it turns out that sparsity is no longer the right notion of structure. Instead, we will need to show that null vectors of $M$ have \emph{small level sets}:

\begin{proposition}[No structured null vectors for $M$]
\label{prop_struct}
For $\eta\in (0,1]$, let $\good^{\sls}(\eta)$ be the event that for any nontrivial left or right null vector $x$ of $M$ and for any $\lambda\in \R$,
\begin{equation}
|x^{-1}(\lambda)| \le \eta n.
\end{equation}
With hypotheses as in Theorem \ref{thm_main}, 
we have that $\good^{\sls}(\eta)$ holds 
with probability $1-O(n^{-100})$ if 
$\eta \ge C_1d^{-c_0}$ for some absolute constants $C_1,c_0>0$ sufficiently large and small, respectively (the constant $c_0$ is the same as in Theorem \ref{thm_main} and can be taken to be $1/18$).
\end{proposition}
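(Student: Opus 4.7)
Rephrasing the bad event: if $x$ is a nontrivial null vector of $M$ with $|x^{-1}(\lambda)|>\eta n$, then setting $T=x^{-1}(\lambda)$, $S=[n]\setminus T$, and $y=x-\lambda\ones$, we have $\spt(y)=S$, $|S|<(1-\eta)n$, and $My=-\lambda d\ones$. It therefore suffices to bound
\begin{equation*}
\pro{\exists\ T\subset[n],\, y\in\R^n,\, c\in\R: \; \spt(y)=[n]\setminus T,\; |T|>\eta n,\; (y,c)\ne 0,\; My=c\ones}.
\end{equation*}
By choosing $\lambda$ to attain the maximal level set of $x$ (and handling the rare case of multiple level sets of size $>\eta n$ by a separate, cruder bound) we may further assume that every nonzero level set of $y$ has size $\le\eta n$.

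The plan has three layers of union bound: (i) over the dyadic size $k=|T|\in[\eta n,n-1]$; (ii) over $T$ of a given size, at cost $\binom{n}{k}$ by column exchangeability; and (iii) over a carefully chosen finite net of rescaled candidate vectors $y$. For (iii) we invoke \Cref{lem_tall} to reduce to vectors whose components lie in a compact range with no small subset carrying most of the mass; the covering number of this reduced set at scale $\eps$ is $\exp(O(|S|\log(1/\eps)))$.

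For each fixed net point $y$, we apply the shuffling coupling of \Cref{sec_shuffling} to $K=\Omega(n)$ disjoint row pairs. On a pair $(i_1,i_2)$, the coupling produces $\tM\eqd M$ by swapping, along a uniform matching $\pi:\Ex_M(i_1,i_2)\to\Ex_M(i_2,i_1)$ and independent Rademacher signs $(\eps_j)$, the entries of $M$ on the symmetric difference $\Ex_M(i_1,i_2)\cup\Ex_M(i_2,i_1)$. A direct calculation gives
\begin{equation*}
(\tM y)_{i_1}-(\tM y)_{i_2}=\sum_{j\in\Ex_M(i_1,i_2)}\eps_j\bigl(y(j)-y(\pi(j))\bigr).
\end{equation*}
On the bad event this difference must vanish for every chosen pair, and \Cref{thm_erdos} bounds the conditional probability (over the $\eps_j$) by $O(N(\pi)^{-1/2})$, where $N(\pi)$ counts the $j$ with $y(j)\ne y(\pi(j))$. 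The expansion properties from \Cref{sec_discrepancy} give $\ex_M(i_1,i_2)=\Theta(d)$ for most pairs, and since the nonzero level sets of $y$ are small by construction, a uniform $\pi$ typically satisfies $N(\pi)=\Omega(d)$. Independence of the shuffles across $K$ disjoint pairs multiplies these estimates to give a total anti-concentration bound of order $d^{-K/2}$ for each fixed $y$.

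The main obstacle is calibrating the union-bound cost, roughly $\exp(O(k\log(n/k))+O(|S|\log(1/\eps)))$, against the coupling gain $d^{-K/2}=\exp(-\Omega(n\log d))$. The delicate regime is small $k\asymp\eta n$ near the threshold $\eta=d^{-c}$, where the net for $y$ is most expensive; here the sharp form of \Cref{lem_tall} is essential, as is the freedom to choose $\eps$ depending on the scales of $y$'s entries. When $d$ is close to the $\omega(\log^2n)$ threshold, a single shuffling per row pair does not produce enough iid signs, and one composes several independent shuffling moves per pair to boost the effective count. The constant $c=1/20$ emerges from this quantitative balance; summing over the dyadic ranges of $|T|$ and absorbing an extra net on the scalar $\lambda$ then concludes the proof.
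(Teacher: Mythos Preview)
Your rephrasing of the bad event (essentially Claim~\ref{claim_goodbad} in the paper) is correct, as is the high-level idea of using shufflings on row pairs to inject Rademacher sums and apply \Cref{thm_erdos}. However, the proposal has a genuine gap in step (iii), and it mischaracterizes \Cref{lem_tall}.

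You propose to union bound over an $\eps$-net of candidate vectors $y$, and for each net point apply anti-concentration via \Cref{thm_erdos}. But \Cref{thm_erdos} only bounds the probability that a Rademacher sum hits an \emph{exact} value, not that it lies in an interval. If $y$ is merely a net point approximating the true null vector $y^*$, then $(\tM y)_{i_1}-(\tM y)_{i_2}$ need not vanish on the bad event---it is only $O(\eps d)$-close to zero---so the event you actually need to control is a small-ball event, which you do not address. Balancing the net cost $\exp(O(|S|\log(1/\eps)))$ against $d^{-K/2}$ would force $\eps$ far above the natural anti-concentration scale, and the argument as written does not close.

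More importantly, \Cref{lem_tall} is not a regularity or compressibility reduction to ``spread'' vectors. In the paper it serves the opposite purpose: it \emph{eliminates the net entirely}. On $\event_k\setminus\event_{k-1}$ the minor $M_{[n]\times[k]}$ has $k-1$ (or $k$) linearly independent rows; conditioning on those rows determines the candidate $\hat v$ uniquely up to scaling. One then bounds $\pr\big(M_{[k+1,n]\times[k]}\hat v=\alpha\ones\,\big|\,R_1,\dots,R_k\big)$ uniformly over deterministic $\hat v\in\mW_k$, paying only $\binom{n}{k}^2$ for the choice of support and of the independent rows. No covering number enters.

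Finally, the paper does not use a single argument for all $k$. For small $k$ (below $\asymp\frac{n\log n}{d}$) the row-pair shuffling fails because $\spt(\hat v)=[k]$ is too small for many indices $j\in\Ex_M(i,\sigma(i))$ to land in $[k]$, so $N(\pi)$ cannot be shown large. The paper instead performs restricted shufflings on \emph{column} pairs for this regime (\Cref{lem_patches}), using the expansion event $\bad(\eps_0,\gamma)^c$ from \Cref{cor_thin}. Your sketch omits this case split.
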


We prove 
Propositions \ref{prop_structpm} and \ref{prop_struct} 
in Section \ref{sec_struct}.
The reason for ruling out null vectors with large level sets will be apparent in the next section, where we describe our couplings approach.

\subsection{Injecting a random walk}
\label{sec_inject0}

The proof in Section \ref{sec_komlos} proceeded by reducing to the event that $R_1\cdot u=0$, where $R_1$ is the first row of $\Xi$ and $u$ is a unit vector in $V_1^\perp=\Span(R_2,\dots,R_n)^\perp$.
Then we used independence of the entries of $\Xi$ in two ways:
\begin{enumerate}
\item Independence of the rows of $\Xi$ allowed us to condition on $R_2,\dots, R_n$ to fix $u$, without affecting the distribution of $R_1$.
\item Independence of the components of $R_1$ allowed us to view the dot product $R_1\cdot u$ as a random walk, to which we could apply the anti-concentration result Theorem \ref{thm_erdos}.
\end{enumerate}
The rrd matrix $M$ enjoys neither of these properties. 
However, we will be able to accomplish something like (2) above by defining an appropriate coupling of rrd matrices using switchings. 
It will take some care to implement this without having the independence between rows (1).

To illustrate our couplings approach, let us consider a toy problem: to control the event that the first two rows lie in the span of the remaining rows, i.e. to show 
\begin{equation}		\label{toy0}
\pr\set{ R_1,R_2\in V_{(1,2)}^\perp} = o(1)
\end{equation}
where $V_{(1,2)}:= \Span(R_3,\dots, R_n)$. 
We will see later that this can be used to control the event that $M$ has corank at least 2 (see Lemma \ref{lem_sampling}).
For now we will operate under the following
\begin{assumption}\label{ass_dense}
$n\ll d \le \frac{n}2$.
\end{assumption}
Thus, we are assuming $M$ is a dense rrd matrix.
In the next section we will discuss some of the new ideas necessary to treat sparse matrices.

Blindly following the proof from Section \ref{sec_komlos}, we condition on the rows $R_3,\dots, R_n$ to fix the space $V_{(1,2)}$, and pick a unit vector $u\in V_{(1,2)}^\perp$, say uniformly and independently of $R_1,R_2$ under the conditioning.
Now it suffices to show
\begin{equation}		\label{toy1}
\pr\big( R_1\cdot u=0 \,\big|\,R_3,\dots,R_n\big) = o(1).
\end{equation}

We need to understand how $R_1$ and $R_2$ are distributed under the conditioning on $R_3,\dots, R_n$. 
Recall from \eqref{codef}, \eqref{exdef} the sets
$\Co_M(1,2)$, $\Ex_M(1,2)$, $\Ex_M(2,1)$, which the partition the vertex-pair neighborhood $\mN_M(\set{1,2})$.
Now since the entries of each column sum to $d$, by fixing $R_3,\dots, R_n$
we have fixed which columns of $M$ need both, neither, or just one of their first two components equal to 1 in order to meet the constraint.
This fixes the sets $\Co_M(1,2)$ and $\Ex_M(1,2)\cup \Ex_M(2,1)$. 
Furthermore, by the row sums constraint, we must have 
\begin{align}
|\Ex_M(1,2)|&=d-|\Co_M(1,2)|	=|\Ex_M(2,1)| 	\label{exco}
\end{align}
It follows that with $R_3,\dots, R_n$ fixed, the only remaining randomness is in the uniform random equipartition of the deterministic set  $\Ex_M(1,2)\cup \Ex_M(2,1)$ into the sets $\Ex_M(1,2)$, $\Ex_M(2,1)$.
See Figure \ref{fig_shuffling}.

\begin{figure}
\centering
  \includegraphics[width=90mm]{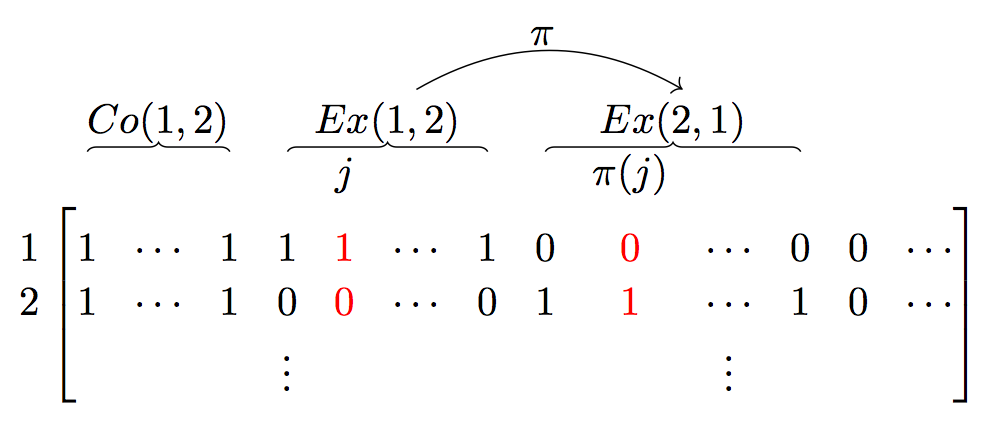}

\caption{The shuffling coupling: the minor $M_{(1,2)\times(j,\pi(j))}$ (in red) will be replaced with $\eye$ or $\jay$ according to a random sign $\xi(j)$. We do this independently for each $j\in \Ex(1,2)$.}

\label{fig_shuffling}
\end{figure}

We re-randomize the sets $\Ex_M(1,2)$, $\Ex_M(2,1)$ in the following way.
Under this conditioning, pick a bijection 
$\pi: \Ex_M(1,2)\rightarrow \Ex_M(2,1)$
uniformly at random. 
Now for each $j\in \Ex_M(1,2)$ we have
$$M_{(1,2)\times(j,\pi(j))}= \idd=: \eye.$$
Having obtained a sequence of ``switchable" $2\times 2$ minors,
we can apply random switchings (with terminology as in Section \ref{sec_strategy}).
Let $\xi:[n]\rightarrow \set{\pm1}$ be a sequence of iid uniform signs, independent of all other variables.
For each $j\in \Ex_M(1,2)$, we replace the minor $M_{(1,2)\times(j,\pi(j))}$ with the random minor
$$\eye \un(\xi(j)=+1)+\jay \un(\xi(j)=-1)$$
(with notation as in \eqref{IJ}).
Call the resulting matrix $\tM$. 
It is not hard to show that $\tM$ is also an rrd matrix after undoing all of the conditioning (see the proof of Lemma \ref{lem_shuffling} below).

We have hence obtained a coupled pair $(M,\tM)$ of rrd matrices (more precisely, we have defined a coupling $(M,\tM,\pi,\xi)$ on an enlarged probability space such that the marginals for the first two entries are uniform). 
Let $\tR_i$ denote the $i$th row of $\tM$. 
Replacing $M$ with $\tM$ in \eqref{toy1}, it suffices to show
\begin{equation}
\pr\big(\tR_1\cdot u =0 \, \big|\, M\big) =o(1).
\end{equation}
Now in the randomness of the iid signs $\xi(j)$, one sees that the dot product $\tR_1\cdot u$ is a random walk:
\begin{equation}	\label{toyrw}
\tR_1\cdot u = W_0 + \sum_{j\in \Ex_M(1,2)} \xi(j) \frac{u(j)-u(\pi(j))}{2}
\end{equation}
where $W_0$ is a term that does not depend on $\pi$ or $\xi$. 
Applying Theorem \ref{thm_erdos} we have
$$\pr_{\xi}\big\{\tR_1\cdot u =0 \big\} \ll \big|\big\{ j\in \Ex_M(1,2): \, u(j)\ne u(\pi(j))\big\} \big|^{-1/2}.$$
It remains to get a lower bound on the number of $j\in \Ex_M(1,2)$ for which $u(j)\ne u(\pi(j))$. 

First we deal with the possibility that $\Ex_M(1,2)$ is a very small set. 
On average, we expect $\Co_M(1,2)$ to be of size roughly $p^2n=d^2/n$. 
By \eqref{exco} and our assumption $p\le 1/2$ (from Remark \ref{rmk_assumed}) we have 
$\e\big|\Ex_M(1,2)\big|\gg d.$
It was shown in \cite{Cook:discrep} that codegrees in random regular digraphs are sharply concentrated (see Theorem \ref{thm_codeg}) from which we can deduce that
\begin{equation}	\label{toyex}
\big|\Ex_M(1,2)\big|\gg d
\end{equation}
off a negligibly small event.

Now we apply Proposition \ref{prop_struct} and the randomness of $\pi$ to argue that for most $j\in \Ex_M(1,2)$ we have $u(j)\ne u(\pi(j))$.
Since $u\in V_{(1,2)}^\perp$ we have that $u$ is a (right) null vector of the $(n-2)\times n$ matrix $M_{[3,n]\times [n]}$.
By a small extension of Proposition \ref{prop_struct} we may assume that $u$ is unstructured, i.e.\ that all of its level sets are of size at most $\eta n$, with $\eta$ of size $\Theta(d^{-c_0})$ for some $c_0>0$ absolute. 
(In the actual proof we will argue that $u$ is unstructured in a slightly different way, but in any case it comes down to an application of Proposition \ref{prop_struct}.)
Now by \eqref{toyex} and Assumption \ref{ass_dense}, the sets $\Ex_M(1,2)$, $\Ex_M(2,1)$ are much larger than the level sets of $u$.
Hence, in the randomness of $\pi$, it is very unlikely that we have $\pi(j)\in u^{-1}(u(j))$ for a large number of indices $j\in \Ex_M(1,2)$. 
Thus, off a negligibly small event we can deduce that most of the steps taken by the random walk \eqref{toyrw} are nonzero, and hence
\begin{equation}	\label{toypixi}
\pr_{\xi, \pi}\big\{\tR_1\cdot u =0 \big\} \ll \big|\Ex_M(1,2)\big|^{-1/2} \ll d^{-1/2}.
\end{equation}
Since we are assuming $d=\omega(1)$, we have completed the proof of \eqref{toy0}.

To summarize, we bounded $\pr(R_1,R_2\in V_{(1,2)}^\perp)$ by defining a coupling $(M,\tM,\pi,\xi)$ on an enlarged probability space, with $\tM\eqd M$, and replacing $M$ with $\tM$ in \eqref{toy1}.
The variables $M$, $\pi$ and $\xi$ each played a special role:
\begin{enumerate}
\item In the randomness of $M$, we simply restricted to a couple of ``good events": the event $\good^{\sls}(\eta)$ that null vectors are unstructured, and the event that codegrees are close to their expectations.
\item Conditional on $M$ satisfying the good events, $\pi$ was used to pair indices in $\Ex_M(1,2)$ with indices in $\Ex_M(2,1)$ to show that, off a small event, the random walk $\tR_1\cdot u$ takes many nonzero steps $\frac12(u(j)-u(\pi(j)))$.
\item Conditional on good realizations of $M$ and $\pi$, the randomness of $\xi$ was used with Theorem \ref{thm_erdos} to finish the proof.
\end{enumerate}

As remarked above, \eqref{toy0} can be used to deduce that $M$ has corank at most 1 
a.a.s.
It then remains to deal with the event that $\corank(M)=1$.
This task is a little more complicated, and involves expressing a certain $2\times 2$ determinant involving two randomly sampled rows of $M$ as a random walk.
See Section \ref{sec_corank1} for details. 

\subsection{Dealing with sparsity}
\label{sec_sparsity}

In the previous section, we used Assumption \ref{ass_dense} to guarantee that the level sets of the normal vector $u$ were small in comparison to the neighborhood $\mN_M(\set{1,2})$ (more precisely, the sets $\Ex_M(1,2)$ and $\Ex_M(2,1)$). 
Indeed, since the level sets are of size at most $\eta n =\Theta(nd^{-c_0})$ by Proposition \ref{prop_struct}, and since $|\Ex_M(1,2)|\gg d$ with high probability, we see upon rearranging that in the above argument we must assume $d\ge Cn^{1/(1+c_0)}$ for a sufficiently large constant $C>0$.
It turns out that the value $c_0=1/8$ is the limit of what can be obtained by our arguments in the proof of Proposition \ref{prop_struct} for the case that $d=\omega(\sqrt{n})$.
Hence, the argument of the previous section is limited to $d\ge Cn^{8/9}$.

In the present work we are able to take $d$ as small as 
$C_0\log^2n$ using some new ideas. 
Rather than consider the event that $R_1,R_2\in V_{(1,2)}^\perp$, we will draw row indices $I_1,I_2$ \emph{at random} and seek to bound 
$
\pr\big\{\, R_{I_1}, R_{I_2}\in V_{(I_1,I_2)}^\perp \, \big\}.
$
It can be shown that this leads to control on the event that $\corank(M)\ge 2$ (see Lemma \ref{lem_sampling}).
Conditional on $I_1,I_2$ and the remaining rows $(R_i)_{i\notin \set{I_1,I_2}}$, we will again select a unit normal vector $u$ uniformly at random.

Whereas in Section \ref{sec_inject0} the distribution of $u$ played no special role, here we will use it along with the randomness of $I_1,I_2$ to argue that it is very unlikely that a level set of $u$ has large overlap with the neighborhood $\mN_M(\set{I_1,I_2})$.
Under conditioning on $I_1,I_2$, one can see that the ``bad" realizations of $u$ form an algebraic subset of the sphere.
We will then use the simple fact that a proper algebraic subset of the sphere has surface measure zero.
(This is perhaps the only part of the proof that is not strictly combinatorial.)
The argument requires some care as the vector $u$ and the set $\mN_M(\set{I_1,I_2})$ are both dependent on $I_1,I_2$.
See Section \ref{sec_corank2} for the detailed proof.

\begin{remark}[Necessary lower bounds on $d$]		\label{rmk_dlb}
While we need to assume $\min(d,n-d)\ge C_0\log^2n$ in Theorem \ref{thm_main}, various parts of the argument work under a weaker lower bound assumption.
Specifically, Theorem \ref{thm_main} follows from Proposition \ref{prop_red} and our ability to restrict to the following ``good events":
\begin{enumerate}
\item $\good^{\sls}(\eta)$ (from Proposition \ref{prop_struct}), with $\eta$ of order $d^{-c_0}$;
\item $\good^{\ex}(\delta)$ (from Theorem \ref{thm_codeg}), with $\delta$ a small fixed constant.
\end{enumerate}
Proposition \ref{prop_struct} shows 1.\ holds with high probability under the hypothesis $\min(d,n-d)\ge C_0\log^2n$ for $C_0>0$ sufficiently large.
Theorem \ref{thm_codeg} establishes 2.\ holding with overwhelming probability if $\min(d,n-d)=\omega(\log n)$ (and in fact holds with high probability if $d\ge C_0\log n$). 
Finally, Proposition \ref{prop_red} itself assumes no lower bound on $d$.
Hence, the only real barrier to assuming a lower bound of order $\log n$ is Proposition \ref{prop_struct}. The lower bound $\min(d,n-d)\ge C_0\log^2n$ is only needed there for technical reasons, and we believe that an improvement to $C_0\log n$ is possible. 
Beyond that, it is likely that an entirely different approach will be needed for the case $\min(d,n-d)=o(\log n)$, as the discrepancy properties in Section \ref{sec_discrepancy} would no longer hold with high probability, and these are essential to several parts of our argument.

Similar comments apply to Theorem \ref{thm_gen}, where the lower bound assumption $d\ge C_0'\log^2n$ comes from Proposition \ref{prop_structpm}.
\end{remark}

\begin{remark}\label{rmk:LLKT-JY2}
As was mentioned in \Cref{rmk:LLKT-JY}, an extension of Theorem \ref{thm_main} to the range $C\le d\le cn/\log^2n$ has recently been accomplished in \cite{LLKT-JY}, for some absolute constants $C,c>0$.
The argument in \cite{LLKT-JY} builds on the approach of the present work, and is similar in its use of a shuffling coupling (much like Lemma \ref{lem_shuffling}) and graph discrepancy properties. 
To take $d$ below the barrier $\log n$ discussed in Remark \ref{rmk_dlb}, they are able to make use of weaker discrepancy properties than the ones employed in the present work.
Another notable difference from the present work is that they are able to effectively apply the shuffling coupling with much less control on ``structured null vectors" than is provided by Proposition \ref{prop_struct}.
\end{remark}

\section{Preliminaries}
\label{sec_prelim}

\subsection{The shuffling coupling}
\label{sec_shuffling}

In this section we formally define the pair $(M,\tM)$ of rrd matrices described in the previous section, where $\tM$ is obtained by re-randomizing the neighborhood $\mN_M(\set{i_1,i_2})$ of a pair of distinct vertices $i_1,i_2\in [n]$ in a certain way. 
Recall that from the row sums constraint we have 
$$
\ex_M(i_1,i_2) = d-\co_M(i_1,i_2) = \ex_M(i_2,i_1)
$$
for any distinct $i_1,i_2\in [n]$ (recall from Section \ref{sec_notation} our notation $\ex_M(i_1,i_2)=|\Ex_M(i_1,i_2)|$, $\co_M(i_1,i_2)=|\Co_M(i_1,i_2)|$).
On an intuitive level, the shuffling operation is somewhat similar to performing a ``riffle shuffling" of the ``deck" $\Ex_M(i_1,i_2)\cup \Ex_M(i_2,i_1)$, then cutting the deck into two equal parts to obtain $\Ex_{\tM}(i_1,i_2)$, $\Ex_{\tM}(i_2,i_1)$.
The set of common neighbors $\Co_M(i_1,i_2) = \mN_M(i_1)\cap \mN_M(i_2)$ is preserved by the shuffling.

\begin{definition}[Shuffling]
\label{def_shuffling}
Let $M\in \mM_{n,d}$ and $i_1,i_2\in [n]$ distinct. 
For a bijection
$$\pi:\Ex_M(i_1,i_2)\rightarrow \Ex_M(i_2,i_1)$$
and a sequence of signs $\xi:[n]\rightarrow\set{\pm1}$, 
by \emph{perform a shuffling on $M$ at rows $(i_1,i_2)$ according to $\pi,$ $\xi$,} 
we mean to replace the $2\times 2$ minors $M_{(i_1,i_2)\times(j,\pi(j))}$ with 
$$\eye \un(\xi(j)=+1) + \jay \un(\xi(j)=-1)$$
for each $j\in \Ex_M(i_1,i_2)$, and to leave all other entries of $M$ unchanged. 
\end{definition}

The key to applying the shuffling operation in the proof of Theorem \ref{thm_main} will be to take $\pi$ and $\xi$ to be random. 

\begin{lemma}[Shuffling coupling]
\label{lem_shuffling}
Let $M$ be an rrd matrix, and fix $i_1,i_2\in [n]$ distinct. 
Conditional on $M$, let $\pi:\Ex_M(i_1,i_2)\rightarrow \Ex_M(i_2,i_1)$ be a uniform random bijection. 
Draw a sequence $\xi:[n]\rightarrow \set{\pm 1}$ of iid uniform signs, independent of all other variables. 
Form $\tM$ by performing a shuffling on $M$ at rows $(i_1,i_2)$ according to $\pi$ and $\xi$. 
Then $\tM\eqd M$. 
\end{lemma}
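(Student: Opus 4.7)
The plan is to condition on all rows of $M$ other than rows $i_1$ and $i_2$, and show that under this conditioning $\tM$'s restriction to those two rows is uniform over the valid completions to an rrd matrix. Since the same is true of $M$ by assumption, the marginals will match and $\tM \eqd M$ follows.

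First I would verify $\tM \in \mM_{n,d}$ deterministically: replacing the $2\times 2$ minor $M_{(i_1,i_2)\times(j,\pi(j))} = \eye$ by either $\eye$ or $\jay$ preserves the sums of rows $i_1,i_2$ and of columns $j, \pi(j)$, and the common-neighborhood $\Co_M(i_1,i_2)$ is untouched by the shuffling. In particular the ``exchange'' parameter $k := d - \co_M(i_1,i_2) = \ex_M(i_1,i_2)$ is the same for $\tM$ as for $M$.

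Next, fix a configuration $\mathcal{R}$ for the other $n-2$ rows. This determines $\Co_M(i_1,i_2)$ (the columns with residual column-sum $d-2$) as well as the ``exchange set'' $S$ of columns with residual column-sum $d-1$, of size $2k$. Because $M$ is uniform on $\mM_{n,d}$, the ordered pair $(\Ex_M(i_1,i_2), \Ex_M(i_2,i_1))$ is conditionally uniform over ordered equal-size bipartitions of $S$, each with probability $1/\binom{2k}{k}$.

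The main step is to check that $(\Ex_{\tM}(i_1,i_2), \Ex_{\tM}(i_2,i_1))$ has the same conditional distribution. Fix a target partition $(E_1^*, E_2^*)$ of $S$ with $|E_1^*| = k$. Writing $E_1 := \Ex_M(i_1,i_2)$, $E_2 := \Ex_M(i_2,i_1)$ and $j := |E_1 \cap E_1^*|$, one checks by unpacking the shuffling that $\Ex_{\tM}(i_1,i_2) = E_1^*$ occurs exactly when (i) $\xi$ equals $-1$ precisely on $A := E_1 \setminus E_1^*$ (of size $k-j$); (ii) $\pi$ maps $A$ bijectively onto $E_1^* \setminus E_1 \subseteq E_2$; and (iii) $\pi$ maps the remaining $j$ elements $E_1 \cap E_1^*$ bijectively onto $E_2 \cap E_2^*$ (also of size $j$). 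Conditions (ii)--(iii) allow $(k-j)!\,j!$ valid choices of $\pi$ out of $k!$, and (i) allows a single valid choice of $\xi|_{E_1}$ out of $2^k$. Since there are $\binom{k}{j}^2$ starting sets $E_1$ with $|E_1 \cap E_1^*| = j$, summing yields
\begin{equation*}
\pr\big( \Ex_{\tM}(i_1,i_2) = E_1^* \,\big|\, \mathcal{R} \big)
= \frac{2^{-k}}{\binom{2k}{k}} \sum_{j=0}^{k} \binom{k}{j}^2 \cdot \frac{(k-j)!\,j!}{k!}
= \frac{2^{-k}}{\binom{2k}{k}} \sum_{j=0}^k \binom{k}{j}
= \frac{1}{\binom{2k}{k}},
\end{equation*}
and averaging over $\mathcal{R}$ gives $\tM \eqd M$. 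The main obstacle is just the combinatorial bookkeeping identifying the valid $(\pi,\xi)$ triples. A slicker alternative would be to exhibit a measure-preserving involution $\Psi: (M,\pi,\xi) \mapsto (\tM, \tpi, \tilde{\xi})$ on the joint sample space, where $\tpi$ agrees with $\pi$ on ``unswapped'' pairs (those with $\xi(j)=+1$) and inverts it on ``swapped'' ones, and $\tilde\xi$ records which pairs were swapped; the invariance of $k$ under shuffling makes $\Psi$ measure-preserving, and $\Psi^2 = \mathrm{id}$ immediately yields equality of marginals.
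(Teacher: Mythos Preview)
Your proof is correct. The conditioning on the remaining $n-2$ rows, the identification of the residual randomness as a uniform equipartition of $S$, and the explicit summation over the overlap parameter $j=|E_1\cap E_1^*|$ all check out; the identity $\sum_j\binom{k}{j}^2\cdot\frac{(k-j)!\,j!}{k!}=\sum_j\binom{k}{j}=2^k$ is exactly what is needed.

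The paper takes a different route. Rather than computing $\pr(\Ex_{\tM}(i_1,i_2)=E_1^*\mid\mathcal R)$ directly, it deduces the lemma as the special case $\Fix=\emptyset$, $s=\ex_M(i_1,i_2)$ of a more general ``restricted shuffling'' statement, and proves the latter by a relabeling argument: it introduces a uniform random bijection $\Phi$ from a fixed template $E_1\cup E_2$ onto $A$, rewrites $(A_1,A_2,S_1,S_2,\pi,\xi)$ as images of deterministic sets under $\Phi$ (and auxiliary $\check\pi,\check\xi$), and observes that after conditioning on $\check\pi,\check\xi$ the set $\tA_1$ is the $\Phi$-image of a \emph{fixed} set of the right size, hence uniform. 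Your involution alternative is close in spirit to this symmetry viewpoint. The paper's approach has the advantage of handling the restricted case (where $|A_1|\ne|A_2|$ in general) with no additional work, whereas your enumeration would need to be redone there; on the other hand, your direct calculation is entirely self-contained and avoids introducing the auxiliary bijection $\Phi$.
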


At one part of the proof we will need the following slightly more general version (which implies the above lemma)
in which there is a fixed set of ``frozen" columns which we cannot modify.
For a set $A$ and an integer $0\le k\le |A|$, we use the notation ${A\choose k}$ for the set of subsets of $A$ of size $k$.

\begin{lemma}[Restricted shuffling]
\label{lem_rshuffling}
Let $M$ be an rrd matrix and fix $i_1,i_2\in [n]$ distinct. 
Let $\Fix\subset[n]$ be a set of column indices that is fixed by conditioning on the rows $(R_i)_{i\notin \set{i_1,i_2}}$.
Set 
\begin{equation}
A_1= \Ex_M(i_1,i_2)\setminus \Fix, \quad A_2 = \Ex_M(i_2,i_1)\setminus \Fix
\end{equation}
and let $s\le \min(|A_1|,|A_2|)$ also be fixed by conditioning on the rows $(R_i)_{i\notin\set{i_1,i_2}}$ (i.e.\ chosen measurably with respect to the sigma algebra generated by these rows).
Conditional on $M$ let 
$S_1\in {A_1\choose s}$ and $S_2\in {A_2\choose s}$
be chosen independently and uniformly.
Conditional on $M, S_1,S_2$, let
$
\pi: S_1\rightarrow S_2
$
be a uniform random bijection.
Finally, let $\xi:[n]\rightarrow\set{\pm1}$ be a sequence of iid uniform signs, independent of all other variables.  

Form $\tM$ from $M$ by replacing the $2\times 2$ minors $ M_{(i_1,i_2)\times(j,\pi(j))}$ with
$$\eye \un(\xi(j)=+1) + \jay \un(\xi(j)=-1)$$
for each $j\in S_1$, leaving all other entries of $M$ unchanged. 
Then $\tM\eqd M$.
\end{lemma}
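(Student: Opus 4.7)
The plan is to unwind the randomness by successive conditioning and then invoke the invariance of the whole construction under permutations of the unfrozen columns. First I will condition on all rows of $M$ other than $R_{i_1}$ and $R_{i_2}$. Because every column sum equals $d$, this conditioning fixes $M(i_1,j)+M(i_2,j)\in\{0,1,2\}$ for every $j$, hence it fixes the sets $\Co_M(i_1,i_2)$, $[n]\setminus\mN_M(\{i_1,i_2\})$, and $E:=\Ex_M(i_1,i_2)\cup\Ex_M(i_2,i_1)$; it also fixes $\Fix$ by hypothesis. The row-sum constraints then give $|\Ex_M(i_1,i_2)|=|\Ex_M(i_2,i_1)|=|E|/2$, so the conditional law of $M$ is the uniform equipartition of $E$ into two labelled halves. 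I write $F:=E\cap\Fix$ and $G:=E\setminus\Fix$.

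Next I will observe that the shuffling only alters entries in the $2s$ columns indexed by $S_1\cup S_2\subset G$, so $\Ex_{\tM}(i_\ell,i_{3-\ell})\cap F=\Ex_M(i_\ell,i_{3-\ell})\cap F$ for $\ell\in\{1,2\}$. Further conditioning on $\Ex_M(i_1,i_2)\cap F$ therefore fixes the sizes $a_\ell:=|A_\ell|=|E|/2-|\Ex_M(i_\ell,i_{3-\ell})\cap F|$, and leaves $A_1=\Ex_M(i_1,i_2)\cap G$ uniformly distributed over $\binom{G}{a_1}$. Writing $T:=\{j\in S_1:\xi(j)=-1\}$ and $\tilde A_1:=\Ex_{\tM}(i_1,i_2)\cap G=(A_1\setminus T)\cup\pi(T)$, the entire task reduces to showing that $\tilde A_1$ is also uniform on $\binom{G}{a_1}$, because the $F$-parts match automatically and $\tilde A_2=G\setminus\tilde A_1$.

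The final step is a symmetry argument. Since each $j\in T\subset A_1$ leaves $A_1$ and the same number of elements $\pi(j)\in S_2\subset A_2$ enter, $|\tilde A_1|=a_1$ always. For any $\sigma\in\sym(G)$, the transported tuple $(\sigma A_1,\sigma S_1,\sigma S_2,\sigma\pi\sigma^{-1},\xi\circ\sigma^{-1}|_{\sigma S_1})$ has the same joint law as $(A_1,S_1,S_2,\pi,\xi|_{S_1})$, because each conditional marginal is either uniform over a $\sigma$-equivariant family of subsets/bijections or consists of iid signs. The map producing $\tilde A_1$ commutes with this action, so $\sigma(\tilde A_1)\eqd\tilde A_1$ for every $\sigma\in\sym(G)$, forcing the law of $\tilde A_1$ to be uniform on size-$a_1$ subsets of $G$. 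Undoing the two layers of conditioning gives $\tM\eqd M$. The main hurdle is purely bookkeeping: tracking how $\Fix$ interacts with the random partition of $E$; once this is isolated by the double conditioning, the symmetry argument is essentially immediate, so I do not anticipate any substantive obstacle beyond setup.
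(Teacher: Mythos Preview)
Your proof is correct and follows essentially the same approach as the paper: both condition on $(R_i)_{i\notin\{i_1,i_2\}}$ and then on the frozen part of the partition to reduce the claim to showing that $\tilde A_1$ is uniform on $\binom{G}{a_1}$, and both verify the same decomposition $\tilde A_1=(A_1\setminus S_1)\sqcup(S_1\cap\xi^{-1}(+1))\sqcup\pi(S_1\cap\xi^{-1}(-1))$. The only cosmetic difference is in the final step: the paper encodes the randomness via a uniform random bijection $\Phi:[a_1+a_2]\to G$ and observes that $\tilde A_1$ is the $\Phi$-image of a deterministic set of size $a_1$, whereas you argue directly that the joint law of $(A_1,S_1,S_2,\pi,\xi|_{S_1})$ is $\sym(G)$-equivariant and the map to $\tilde A_1$ commutes with this action, forcing uniformity since $\sym(G)$ is transitive on $\binom{G}{a_1}$ --- this is the same symmetry, just invoked abstractly rather than parametrized.
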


Lemma \ref{lem_shuffling} follows from Lemma \ref{lem_rshuffling} by taking $\Fix$ to be empty and $s=\ex_M(i_1,i_2)$. 

\begin{proof}
Condition on the rows $(R_i)_{i\notin \set{i_1,i_2}}$. 
This fixes $\Fix$ and the set
$\Ex_M(i_1,i_2)\cup \Ex_M(i_2,i_1).$
Condition also on the columns of $M$ with indices in $\Fix$ -- this fixes $a_1:=|A_1|$ and $a_2:=|A_2|$. 

The only remaining randomness of $M$ is in the uniform random partition of 
$$A:=\big(\Ex_M(i_1,i_2)\cup \Ex_M(i_2,i_1)\big)\setminus \Fix$$
into the sets $A_1,A_2$ of prescribed sizes.
It hence suffices to show that $\tA_1:=\Ex_{\tM}(i_1,i_2)\setminus \Fix$ is also distributed uniformly over ${A\choose a_1}$.
We may write $\tA_1$ as the disjoint union
\begin{equation}	\label{ta1def}
\tA_1 = (A_1\setminus S_1)\, \sqcup\, \big(S_1\cap\xi^{-1}(+1)\big)\, \sqcup\, \pi\big(S_1\cap \xi^{-1}(-1)\big).
\end{equation}

To see that $\tA_1\eqd A_1$ it is clearer to use the following alternative description of the coupling $(M,\tM, S_1,S_2,\pi, \xi)$. 
Denote
\begin{align*}
E_1 &:= [a_1] \supset[s]=: F_1 \\
E_2 &:= [a_1+1,a_1+a_2] \supset [a_1+1,a_1+s]=: F_2
\end{align*}
and denote $E=E_1\cup E_2$, $F=F_1\cup F_2$. 
Under the above conditioning, draw bijections
\begin{equation}
\Phi: E\rightarrow A, \quad \check{\pi}: F_1\rightarrow F_2
\end{equation}
independently and uniformly at random, and let $\check{\xi}:[n]\rightarrow\set{\pm1}$ be a sequence of iid uniform signs independent of all other variables. 
Then
$$(A_1,A_2,S_1,S_2,\pi, \xi)\eqd \big(\Phi(E_1),\Phi(E_2),\Phi(F_1),\Phi(F_2),\Phi\circ\check{\pi}\circ\Phi^{-1}, \check{\xi}\circ\Phi^{-1}\big).$$
We have shifted the randomness of the sets $A_1,A_2,S_1,S_2$ to 
the randomness of the map $\Phi$.
We want to show that $\tA_1\eqd \Phi(E_1)$, where $E_1=[a_1]$ is now a deterministic set.
From \eqref{ta1def}, 
\begin{align*}
\tA_1 &\eqd \Phi(E_1\setminus F_1) \, \sqcup \, \Phi\big(F_1\cap \check{\xi}^{-1}(+1)\big) \, \sqcup\, (\Phi\circ \check{\pi})\big(F_1\cap\check{\xi}^{-1}(-1)\big) \\
& = \Phi\Big[ (E_1\setminus F_1) \, \sqcup \, \big(F_1\cap \check{\xi}^{-1}(+1)\big) \, \sqcup\, \check{\pi}\big(F_1\cap\check{\xi}^{-1}(-1)\big) \Big].
\end{align*}
Conditioning on $\check{\pi}$ and $\check{\xi}$ (which doesn't affect the distribution of $\Phi$) we have that $\tA_1$ is the image under $\Phi$ of a fixed set of size $a_1$, which completes the proof.
\end{proof}

\subsection{Discrepancy properties}
\label{sec_discrepancy}

In this section we collect various ``good events" concerning the distribution of edges in the random regular digraph $\Gamma$ associated to $M$. 
In all cases, the good event is shown to hold with overwhelming probability, for a suitable range of parameters and assuming $\min(d,n-d)=\omega\big(\log n\big)$
(note this is a wider range of $d$ than is assumed in Theorems \ref{thm_main} and \ref{thm_gen}).
This will allow us to restrict to these events without further comment in subsequent stages of the proof.
(Indeed, note that we are ultimately aiming for only a polynomially-small bound on the singularity probability, so the failure probabilities for the good events will be negligible.)
At the end of the section we prove that Theorem \ref{thm_pm} follows from Theorem \ref{thm_gen} by showing that the event $\good(d)$ from the latter theorem holds with overwhelming probability for $M$.

The results of this section are all corollaries of sharp tail estimates for codegrees and edge counts in random regular digraphs (Theorems \ref{thm_codeg} and \ref{thm_edge} below).
The proofs, which are too long for inclusion in the present work, are contained in the companion paper \cite{Cook:discrep}.
These results may also be of independent interest for graph theorists.

The shuffling coupling from Lemma \ref{lem_shuffling} will only be useful if the sets $\Ex_M(i_1,i_2)$ are large (see Section \ref{sec_inject0}).
Hence, the following result from \cite{Cook:discrep} will be essential for our arguments.
Recall that $p:=d/n$ denotes the average edge density for the digraph.

\begin{theorem}[Concentration of codegrees \cite{Cook:discrep}]
\label{thm_codeg}
For $\delta\in (0,1)$, let $\good^{\ex}(\delta)$ denote the event that for every pair of distinct $i_1,i_2\in [n]$ we have
\begin{equation}
 \left| \frac{\ex_M(i_1,i_2)}{p(1-p)n} - 1\right| \le \delta \quad \mbox{ and } \quad  \left| \frac{\ex_{M^\tran}(i_1,i_2)}{ p(1-p)n} -1 \right| \le \delta.
\end{equation}
Then
\begin{equation}
\pro{\good^{\ex}(\delta)} = 1 - n^{O(1)}\expo{ - c\delta \min \big\{d,n-d, \delta n\big\}}.
\end{equation}
In particular, for any fixed $\delta\in(0,1)$ independent of $n$ we have that $\good^{\ex}(\delta)$ holds with overwhelming probability if $\min(d,n-d)=\omega(\log n)$.
\end{theorem}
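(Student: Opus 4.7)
The plan is to reduce the claim about $\ex_M(i_1, i_2)$ to a concentration statement for the codegrees $\co_M(i_1, i_2) = d - \ex_M(i_1, i_2)$ (the equality coming from the row-sum constraint), and to establish that by McKay's switching method applied directly inside $\mM_{n, d}$. A direct calculation gives
\[
\e\, \co_M(i_1, i_2) = \frac{d(d-1)}{n-1} = p^2 n + O(p),
\]
whence $\e\, \ex_M(i_1, i_2) = p(1-p)n + O(p)$, matching the centering in $\good^{\ex}(\delta)$ up to negligible error. I would avoid attempting to transfer concentration from the iid Bernoulli model via the conditioning $M \eqd M_p \mid \event_{n, d}$, since the lower bound in \eqref{psimplb} already costs $\exp(\Omega(n \sqrt{d}))$, which swamps any plausible single-pair tail in the sparse regime $d = \omega(\log n)$.

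Fix distinct rows $i_1, i_2$, and let $N_k$ count those $M \in \mM_{n, d}$ with $\co_M(i_1, i_2) = k$. The core step is to bound the ratio $N_{k+1}/N_k$ via a forward/reverse switching that decreases the codegree by exactly one. A natural candidate is a $2 \times 2$ switch: starting from $M$ with $\co_M(i_1, i_2) = k + 1$, pick $j_0 \in \Co_M(i_1, i_2)$, an auxiliary row $i_3 \notin \{i_1, i_2\}$ with $M(i_3, j_0) = 0$, and a column $j_1 \in \mN_M(i_3) \setminus (\mN_M(i_1) \cup \mN_M(i_2))$; then replace $M_{(i_1, i_3) \times (j_0, j_1)} = \eye$ by $\jay$. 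This preserves every row and column sum and strictly removes $j_0$ from $\Co_M(i_1, i_2)$ without creating new common neighbors, so the codegree drops by exactly one. Double counting forward and reverse switchings, together with typical-regularity estimates for the relevant edge and path counts, yields a log-derivative $\partial_k \log(N_{k+1}/N_k)$ of order $-1/(p(1-p)n)$ in a window around the putative mean $k = p^2 n$; telescoping the resulting product of ratios gives binomial-type concentration for $\co_M(i_1, i_2)$, which translates back to $\ex_M(i_1, i_2)$. A union bound over the $\binom{n}{2}$ unordered pairs of rows, plus the identical argument applied to $M^\tran$ for the column codegrees, produces the $n^{O(1)}$ prefactor.

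The exponent $\delta \min(d, n - d, \delta n)$ reflects the standard crossover between Gaussian and Poisson concentration: when the deviation $\delta p(1-p) n$ is of the order of the standard deviation, the telescoped ratio is well-approximated by a Gaussian, giving $\exp(-c \delta^2 n)$; at larger $\delta$ the bound saturates at $\exp(-c \delta d)$, the large-deviations regime for a sum of roughly $d$ near-independent Bernoulli contributions; and the symmetric $n - d$ term reflects the complementary-matrix symmetry of \Cref{rmk_assumed}. The main obstacle will be making the switching estimates sharp enough to capture this full interpolation uniformly in $k$ deep into the tail, since the $o(1)$ error in the ratio $N_{k+1}/N_k$ depends on auxiliary counts of paths and codegrees in $\Gamma$---precisely the sort of quantity one is trying to control. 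A natural workaround is a bootstrap: first use a coarse a priori estimate (for instance via Chatterjee's permutation concentration inequality, singled out as a preliminary tool in \Cref{sec_prelim}) to control the typical structure of $\Gamma$ with moderate tails, then feed this estimate back into the switching-ratio analysis to sharpen the bound to the claimed exponent.
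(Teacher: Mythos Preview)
The paper does not actually prove this theorem: it is stated as a black box, with the explicit remark in \Cref{sec_discrepancy} that ``the proofs, which are too long for inclusion in the present work, are contained in the companion paper \cite{Cook:discrep}.'' So there is no in-paper proof to compare against.

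That said, your sketch is broadly consistent with the switchings approach that \cite{Cook:discrep} is known to take, and the reduction $\ex_M(i_1,i_2)=d-\co_M(i_1,i_2)$ is correct. Your specific switching (replacing $M_{(i_1,i_3)\times(j_0,j_1)}=\eye$ by $\jay$) does drop $\co_M(i_1,i_2)$ by exactly one as you claim, and the bootstrap idea---first establish coarse control on auxiliary counts, then feed that into sharper ratio estimates---is exactly how these arguments are typically organized. Where your proposal is vague is in the quantitative step: you assert the telescoped ratio gives the exponent $c\delta\min(d,n-d,\delta n)$ with the Gaussian/Poisson crossover, but actually extracting this uniformly over the whole tail from the switching ratios requires real work (controlling the number of forward and reverse switchings with multiplicative error $1+o(1)$ when $k$ is far from $p^2n$, handling the cases $p$ small and $1-p$ small symmetrically, etc.). This is presumably why the proof was relegated to a separate paper. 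Your proposal is a reasonable outline, but not yet a proof.
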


Our next result concerns the concentration of the number of edges $e_M(A,B)$ passing from a set $A$ to a set $B$ (defined in \eqref{defedge}).
We expect this random variable to be of size roughly $\mu(A,B):=p|A||B|$.
It is straightforward to check that from the $d$-regularity constraint, for any $t\in \R$ we have the following equality of events:
\begin{equation}
\big\{ e_M(A,B) - p|A||B| \ge t \big\} = \big\{ e_M(A^c,B^c) -p|A^c||B^c| \ge t \big\}
\end{equation}
where we denote $A^c=[n]\setminus A$.
That is, a large deviation of $e_M(A,B)$ coincides with a large deviation of $e_M(A^c,B^c)$. 
It will hence be natural to express deviations of $e_M(A,B)$ at the scale 
\begin{equation}
\hat{\mu}(A,B) := p \min\big\{ |A||B|, (n-|A|)(n-|B|) \big\}.
\end{equation}

\begin{theorem}[Concentration of edge counts \cite{Cook:discrep}]
\label{thm_edge}
With $\good^{\ex}(\delta)$ as in Theorem \ref{thm_codeg}, we have that for any $A,B\subset [n]$ and any $\tau\ge 0$, 
\begin{equation}
\pro{ \Big\{ \big| e_M(A,B) - \mu(A,B) \big| \ge \tau \hat{\mu}(A,B) \Big\} \wedge \good^{\ex}(\delta) } 
\le 2\expo{ -\frac{c\tau^2}{1+\tau} \hat{\mu}(A,B) }
\end{equation}
provided $\delta \le \min\big( \frac{1}{4}, \frac{\tau}{8}\big)$. 
\end{theorem}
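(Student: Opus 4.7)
My plan is to prove Theorem~\ref{thm_edge} via McKay's switching method, which is well-suited for sharp counting on $\mM_{n,d}$. The strategy is to define a local ``switching'' move that decreases $e_M(A,B)$ by exactly one while preserving the row- and column-sum constraints, count the number of valid moves in both directions as a function of $M$, and use these counts to obtain tight multiplicative control of the ratio between the sizes
\begin{equation}
N_k := |\{M \in \mM_{n,d} \cap \good^{\ex}(\delta) : e_M(A,B) = k\}|
\end{equation}
for consecutive values of $k$. Telescoping the log-ratios then produces a Bernstein-type tail bound on $e_M(A,B)$.

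Concretely, a forward switching consists of choosing rows $i \in A$, $i' \in A^c$ and columns $j \in B$, $j' \in B^c$ with $M_{(i,i')\times(j,j')} = \eye$, and replacing this $2\times 2$ minor by $\jay$. The number of available forward switchings $F(M)$ is a sum over such quadruples, which after summing over $(i',j')$ first can be written in terms of $e_M(A,B)$, the out-codegrees $\ex_M(i,i')$, and the in-codegrees $\ex_{M^\tran}(j,j')$ via inclusion--exclusion. On the good event $\good^{\ex}(\delta)$, all these codegrees lie within factor $1 \pm \delta$ of $p(1-p)n$, so $F(M)$ is sharply concentrated around a function of $e_M(A,B)$ alone. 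A symmetric argument yields the reverse count $R(M)$, concentrated around a function of $|A||B| - e_M(A,B)$.

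The double-counting identity $\sum_{M : e_M(A,B)=k+1} F(M) = \sum_{M : e_M(A,B)=k} R(M)$, applied within $\good^{\ex}(\delta)$ (with a separate treatment of boundary switchings that cross out of $\good^{\ex}(\delta)$, whose contribution is negligible by Theorem~\ref{thm_codeg}), yields
\begin{equation}
\frac{N_{k+1}}{N_k} \;=\; \frac{p}{1-p} \cdot \frac{|A||B|-k}{k+1}\cdot (1 + O(\delta)),
\end{equation}
matching the corresponding ratio in the Bernoulli model $M_p$ conditioned on $e_{M_p}(A,B) = k$. Summing $\log(N_{k+1}/N_k)$ from $k = \lf\mu(A,B)\rf$ up to $\lf \mu(A,B) + \tau\hat\mu(A,B)\rf$ produces exactly the Bernstein exponent $c\tau^2\hat\mu/(1+\tau)$: near the mean the sum behaves like a Gaussian one (exponent $\asymp \tau^2 \hat\mu$), and in the Poisson tail regime ($\tau \gg 1$) the factor $\log[(|A||B|-k)/(k+1)]$ degrades to a linear-in-$\tau$ exponent $\asymp \tau\hat\mu$. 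The involution $M \mapsto J - M$, $(A,B) \mapsto (A^c, B^c)$ (which preserves $\mM_{n,d}$) exchanges the upper and lower tails and makes $\hat\mu(A,B)$ the natural scale, allowing both tails to be deduced from a single argument.

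The main obstacle will be controlling the compounding of the multiplicative $O(\delta)$ error across up to $\tau\hat\mu$ steps of the telescope: the accumulated error $O(\delta\cdot\tau\hat\mu)$ must remain small compared to the Bernstein exponent $c\tau^2\hat\mu/(1+\tau)$, which is exactly what forces the hypothesis $\delta \leq \tau/8$. A secondary bookkeeping issue is that the good event $\good^{\ex}(\delta)$ is not strictly preserved by individual switchings; I would handle this by showing that each switching perturbs each codegree by at most $O(1)$, so that the failure of $\good^{\ex}(\delta)$ under a single step requires being very close to its boundary -- an event controlled by Theorem~\ref{thm_codeg} with a slightly relaxed $\delta$. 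The factorization structure of $F(M)$ and $R(M)$ in terms of codegrees is the part requiring the most delicate inclusion--exclusion, and it is there that all nonsparse corrections must be tracked to the precision needed for the Bernstein constant.
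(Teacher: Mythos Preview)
This theorem is not proved in the present paper; as stated at the start of \Cref{sec_discrepancy}, the proofs of \Cref{thm_codeg} and \Cref{thm_edge} ``are too long for inclusion in the present work'' and ``are contained in the companion paper \cite{Cook:discrep}.'' There is therefore no proof here against which to compare your proposal directly.

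That said, your outline is a sound instance of the switching method and is broadly the right idea. One hint about how the companion paper likely proceeds: the remark after \Cref{lem_com} notes that \Cref{lem_com} is essentially the $d=1$ case of \Cref{thm_edge} and follows from Chatterjee's exchangeable-pairs concentration inequality (Proposition~1.1 of \cite{Chatterjee}) or from Theorem~1.18 of \cite{Cook:discrep}. This suggests the argument in \cite{Cook:discrep} is phrased via exchangeable pairs---a uniformly random simple switching produces $(M,M')$ with $|e_M(A,B)-e_{M'}(A,B)|\le 1$, and one bounds the conditional drift and variance---rather than via the level-set-ratio telescope you describe. The two are tightly linked (the drift is essentially $F(M)-R(M)$ normalized by the total number of switchable quadruples), but the exchangeable-pairs route tends to package the error control more cleanly and sidesteps the issue of switchings crossing the boundary of $\good^{\ex}(\delta)$.

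One point where your sketch is optimistic: the claim that $F(M)$ reduces by inclusion--exclusion to $e_M(A,B)$ and codegrees alone is not quite accurate. Fixing an edge $(i,j)\in A\times B$, the number of completions $(i',j')\in A^c\times B^c$ with $M_{(i,i')\times(j,j')}=\eye$ equals $e_M\big(A^c\setminus\mN_{M^\tran}(j),\,B^c\setminus\mN_M(i)\big)$; expanding this brings in terms like $e_M(A^c\cap\mN_{M^\tran}(j),B^c)$, which are edge counts into neighborhoods rather than codegrees. These corrections are lower order and can be controlled on $\good^{\ex}(\delta)$, but this is where the real work lies, and it is not purely a codegree computation.
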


Combining the above theorems with a union bound over pairs of vertex sets $(A,B)$,
we can deduce that with overwhelming probability, the densities of all sufficiently large minors of $M$ are uniformly close to their expectations.
The following is a consequence of Corollary 1.8 in \cite{Cook:discrep} for the case that $\min(d,n-d)=\omega(\log n)$ -- see \cite{Cook:discrep} for the bound with explicit dependence on $n$, $d$ and the parameter $\eps$.

\begin{corollary}[Discrepancy for large minors \cite{Cook:discrep}]
\label{cor_discrep}
Let 
$C>0$ be a sufficiently large absolute constant. 
For $\eps\in (0,1)$, define the family of pairs of sets
\begin{equation}
\mF(\eps)= \set{(A,B):\; A, B\subset[n],\; \min(|A|,|B|) \ge \frac{C}{\eps^2}\frac{\log n}{p} }
\end{equation}
and the event
$$\good^{\ee}(\eps) = \Big\{ \forall (A,B)\in \mF(\eps), \;\big|e_M(A,B) - \mu(A,B) \big| \le \eps \hat{\mu}(A,B) \Big\}. $$
If $\min(d,n-d)=\omega(\log n)$ and $\eps\in (0,1)$ is fixed independent of $n$, then $\good^{\ee}(\eps)$ holds with overwhelming probability.
\end{corollary}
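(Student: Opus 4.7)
The plan is to combine Theorem~\ref{thm_edge} with a union bound over pairs $(A,B) \in \mF(\eps)$, after restricting to the overwhelming-probability event $\good^{\ex}(\delta)$ on which Theorem~\ref{thm_edge} is valid. First I would fix $\delta$ to be a small constant satisfying $\delta \le \min(1/4,\eps/8)$; then Theorem~\ref{thm_edge} applied with $\tau = \eps$ yields the per-pair bound $\pr(|e_M(A,B) - \mu(A,B)| > \eps \hat{\mu}(A,B)) \le 2\exp(-c\eps^2 \hat{\mu}(A,B)/(1+\eps))$, while $\good^{\ex}(\delta)$ itself holds with overwhelming probability by Theorem~\ref{thm_codeg} under the hypothesis $\min(d,n-d) = \omega(\log n)$.

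Before union-bounding I would exploit a complementation symmetry. The $d$-regularity constraint gives $e_M(A^c, B^c) = e_M(A,B) + d(n - |A| - |B|)$, which combined with the identity $\mu(A^c, B^c) = \mu(A,B) + d(n - |A| - |B|)$ shows $e_M(A^c, B^c) - \mu(A^c, B^c) = e_M(A,B) - \mu(A,B)$. Since $\hat{\mu}$ is manifestly invariant under $(A,B) \leftrightarrow (A^c, B^c)$, the discrepancy event is too, so I may restrict the union bound to pairs with $|A| + |B| \le n$, for which $\hat{\mu}(A,B) = p|A||B|$ takes its simpler form.

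Grouping such pairs by sizes $(a,b)$, the count is $\binom{n}{a}\binom{n}{b} \le \exp(a\log(en/a) + b\log(en/b))$, which (WLOG $a \le b$) is at most $\exp(2b\log(en/a))$. The contribution of a given size pair to the union bound is thus at most $\exp(2b\log(en/a) - c'\eps^2 pab) = \exp(-b(c'\eps^2 pa - 2\log(en/a)))$, where $c' := c/(1+\eps)$. Under the hypothesis $pa \ge C_0 \log n/\eps^2$, the first term in the bracket is at least $c'C_0 \log n$, while the second is at most $3\log n$ for $n$ large. Choosing $C_0$ sufficiently large relative to $1/c'$ makes the bracket at least $(c'C_0 - 3)\log n$, which can be made larger than any prescribed constant; summing over the $O(n^2)$ choices of $(a,b)$ then leaves a bound of size $O(n^{2-c''})$ for a constant $c''$ growing with $C_0$, yielding the overwhelming-probability conclusion.

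The main obstacle is purely bookkeeping of constants: one must pick $C_0$ large enough in terms of the absolute constant $c$ from Theorem~\ref{thm_edge} so that the tail exponent $c\eps^2 pab/(1+\eps)$ strictly dominates the combinatorial entropy $a\log(en/a) + b\log(en/b)$ uniformly across the size range of interest. Beyond this calibration, no ideas beyond Theorems~\ref{thm_codeg} and~\ref{thm_edge} are needed.
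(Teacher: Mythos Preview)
The paper does not prove this corollary here; it simply cites Corollary~1.8 of the companion paper~\cite{Cook:discrep}. Your plan---restrict to $\good^{\ex}(\delta)$ with $\delta\le\min(1/4,\eps/8)$, apply Theorem~\ref{thm_edge} with $\tau=\eps$, then union bound---is the natural derivation from the tools stated here, and your entropy calculation is correct for the pairs you actually treat.

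There is, however, a real gap in the complementation step. The discrepancy event is indeed invariant under $(A,B)\mapsto(A^c,B^c)$, but membership in $\mF(\eps)$ is not: a pair $(A,B)\in\mF(\eps)$ with $|A|+|B|>n$ can have $\min(|A^c|,|B^c|)$ as small as $1$. After passing to the representative with $|A'|+|B'|\le n$ you lose the lower bound $pa\ge C_0\eps^{-2}\log n$ that your estimate relies on, and the tail exponent $c'\eps^2 p|A'||B'|$ becomes $O(p)$, far too small to beat the entropy. In fact this gap cannot be patched: taking $|A|=|B|=n-1$ (which lies in $\mF(\eps)$ once $d=\omega(\log n)$), with $A^c=\{i\}$, $B^c=\{j\}$, one has $\hat\mu(A,B)=p$ while $e_M(A,B)-\mu(A,B)=M(i,j)-p$, so $|e_M(A,B)-\mu(A,B)|\ge\min(p,1-p)>\eps p$ deterministically for any fixed $\eps<1$. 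Thus $\good^{\ee}(\eps)$ as literally written has probability zero; presumably the version in~\cite{Cook:discrep} also imposes an upper bound on $\max(|A|,|B|)$ (equivalently a lower bound on $\min(|A^c|,|B^c|)$). The downstream applications here (Lemmas~\ref{lem_Aeps} and~\ref{lem_ASeps}) only use the consequences $(1-\eps)p|A||B|\le e_M(A,B)\le(1+\eps)p|A||B|$, i.e.\ the weaker bound $|e_M-\mu|\le\eps\mu$, and for that version your union bound goes through directly without any complementation.
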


Note that for $S,B\subset[n]$ we have the deterministic bound
\begin{equation}	\label{dsbound}
e_M(S,B) \le d|S|
\end{equation}
which is effective when $S$ is small (and we have equality when $B=[n]$). 
While this bound will be sufficient for many purposes, we will sometimes need a little more when $|B|=o(n)$.
Theorem \ref{thm_edge} allows us to improve on \eqref{dsbound} off a small event:

\begin{corollary}[Discrepancy for thin minors]
\label{cor_thin}
For $\gamma >0$ set
\begin{equation}	\label{s0}
s_0(\gamma):= \frac{\log n}{2\gamma }\frac{n}{d}
\end{equation}
and for $\eps_0\in (0,1]$ define the family of ``thin minors"
\begin{equation}
\mF_{\thin}(\eps_0,\gamma) = \Big\{ (S,B) : |S|\le s_0(\gamma),  
\; |B| \le \frac{\eps_0 \gamma}{\log n}d|S| \Big\}.
\end{equation}
Let
\begin{equation}	\label{bepsg}
\bad(\eps_0,\gamma) = \Big\{ \exists\, (S,B)\in \mF_{\thin}(\eps_0,\gamma): \; 
\max\big(e_M(S,B) , e_M(B,S)\big) \ge \eps_0 d|S|\Big\}.
\end{equation}
There are absolute constants 
$C,c>0$ such that if $\min(d,n-d)\ge C\eps_0^{-1}\log n$ and $\gamma\in (0,c]$, then
\begin{equation}
\pro{ \bad(\eps_0, \gamma) }\ll \expo{-c\eps_0 d}.
\end{equation}
In particular, if $\min(d,n-d)=\omega(\log n)$ and $\eps_0$ is fixed independent of $n$, then $\bad(\eps_0,\gamma)^c$ holds with overwhelming probability.
\end{corollary}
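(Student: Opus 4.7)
The plan is to prove the bound on $\bad(\eps_0,\gamma)$ by applying the sharp tail bound of Theorem \ref{thm_edge} to each $(S,B)\in\mF_{\thin}(\eps_0,\gamma)$ and then taking a union bound. The key numerical observation, which makes the approach succeed, is that a thin minor has very small expected edge count: combining $|B|\le \frac{\eps_0\gamma}{\log n}d|S|$ and $|S|\le s_0(\gamma)=\frac{\log n}{2\gamma p}$ (where $p=d/n$) yields
\[
\mu(S,B) \;=\; p|S||B| \;\le\; \frac{\eps_0 \gamma}{\log n}\cdot \frac{\log n}{2\gamma}\cdot d|S| \;=\; \frac{\eps_0}{2}d|S|.
\]
Thus on the event $\{e_M(S,B)\ge \eps_0 d|S|\}$ the deviation from the mean is at least $\eps_0 d|S|/2$, which is comparable to or larger than $\mu(S,B)\ge \hat\mu(S,B)$.

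Fixing $(S,B)$, I would take $\tau$ so that $\tau\hat\mu(S,B)=\eps_0 d|S|/2$; since $\hat\mu(S,B)\le\mu(S,B)\le \eps_0 d|S|/2$ this forces $\tau\ge 1$, so $\tau^2/(1+\tau)\ge \tau/2$. Choosing $\delta=1/8$ in Theorem~\ref{thm_edge} (which is permissible since $\tau\ge 1$), one gets
\[
\pro{ \set{e_M(S,B)\ge \eps_0 d|S|}\wedge \good^{\ex}(1/8) } \;\le\; 2\exp\!\left(-c'\tau\hat\mu(S,B)\right) \;\le\; 2\exp(-c''\eps_0 d|S|),
\]
and the same bound for $e_M(B,S)$ by applying Theorem \ref{thm_edge} with the pair $(B,S)$ in place of $(S,B)$.

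For the union bound, I count pairs with $|S|=s$, $|B|=b\le b_s:=\frac{\eps_0\gamma}{\log n}ds$: using ${n\choose s}{n\choose b}\le \exp(s\log(en/s)+b\log(en/b))$ and the inequalities $s\log(en/s)\le 2s\log n$ and (since $x\mapsto x\log(en/x)$ is increasing on $[1,n]$) $b\log(en/b)\le b_s\log(en/b_s)\le 3\eps_0\gamma\, ds$ (here I use that $en/b_s\le en\log n/(\eps_0\gamma)$ which has log $\ll \log n$ for fixed $\eps_0,\gamma$), one finds that the combined logarithm is
\[
2s\log n + 3\eps_0\gamma\, ds - c''\eps_0\, ds \;\le\; 2s\log n - \tfrac{c''}{2}\eps_0\, ds
\]
provided $\gamma\le c_0:= c''/12$. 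The hypothesis $d\ge C_0\eps_0^{-1}\log n$ (for $C_0$ large) lets us absorb the $2s\log n$ term into a fraction of $\eps_0 ds/4$, so summing the geometric series over $s\ge 1$ gives $\ll \exp(-c\eps_0 d)$. Adding the tiny failure probability of $\good^{\ex}(1/8)$ from Theorem~\ref{thm_codeg} (which is $n^{O(1)}\exp(-c\min(d,n-d))$) finishes the bound. The only real delicacy I anticipate is bookkeeping the dependence on $\gamma$ and $\eps_0$ carefully so that the combinatorial entropy terms $\log\binom{n}{s}$ and $\log\binom{n}{b}$ are both strictly dominated by the tail gain $c''\eps_0 ds$, which is exactly what fixes the condition $\gamma\le c_0$.
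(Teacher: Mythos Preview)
Your proposal is correct and follows essentially the same route as the paper: bound $\mu(S,B)\le \eps_0 d|S|/2$ from the thinness constraints, apply Theorem~\ref{thm_edge} with $\delta=1/8$ to get an $\exp(-c\eps_0 d|S|)$ tail for each fixed pair, then take a union bound in which the $b$-entropy is absorbed by taking $\gamma$ small and the $s$-entropy by the hypothesis $d\ge C_0\eps_0^{-1}\log n$, and finally add back the failure probability of $\good^{\ex}(1/8)$. The only cosmetic difference is that the paper uses the cruder estimate $\sum_{b\le b_0}{n\choose b}\ll n^{b_0}$ (giving $b_0\log n=\eps_0\gamma ds$ directly) where you use the slightly sharper $b_s\log(en/b_s)$; both yield the same absorption once $\gamma\le c_0$.
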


\begin{proof}
Let $\eps_0\in (0,1]$ and $\gamma>0$.
Denote
\begin{equation}	\label{b0}
b_0(\eps_0,\gamma, s) :=  \frac{\eps_0 \gamma}{\log n}ds
\end{equation}
and
fix $s\le s_0(\gamma)$, $b\le b_0(\eps_0,\gamma,s)$. 
For $S,B\subset [n]$ with $|S|=s$, $|B|=b$ we have
\begin{equation}	\label{mubound}
\mu(S, B) = \frac{dsb}{n} \le \frac{b_0ds}{n}  \le \frac{\eps_0 ds}2.
\end{equation}
Let $\delta>0$ to be chosen. For fixed $S, B$ as above,
from Theorem \ref{thm_edge} we have
\begin{align}
\pro{ \good^{\ex}(\delta)\wedge \Big\{ e_M(S,B) \ge \eps_0 ds \Big\}} 
&\le \pro{ \good^{\ex}(\delta)\wedge \Big\{ e_M(S,B) - \mu(S,B) \ge \frac{\eps_0}{2} ds \Big\}}  \notag\\
&\le \expo{ -c\eps_0 ds}	\label{bsfixed}
\end{align}
provided we take 
$\delta \le \min\left(\frac14, \frac{\eps_0 d s}{16 \mu}\right).$
Since $\frac{\eps_0 d s}{16\mu} \ge \frac18$ by \eqref{mubound}, we can take $\delta = \frac18$. 
With this choice of $\delta$ we have
\begin{equation}	\label{gexbound}
\pro{\good^{\ex}(\delta)^c} \ll n^{O(1)} \expo{-cd}
\end{equation}
from Theorem \ref{thm_codeg}.
Now by a union bound, \eqref{bsfixed} and the assumed lower bound on $d$, 
\begin{align*}
\pro{ \good^{\ex}(1/8) \wedge\bad(\eps_0,\gamma) } 
&\le \sum_{s\le s_0(\gamma)} {n\choose s} \sum_{b\le b_0(\eps_0,\gamma,s)} {n\choose b} \expo{-c\eps_0 ds} \\
&\ll \sum_{s\le s_0(\gamma)} n^{b_0} \expo{s(\log n - c\eps_0 d)} \\
&\le \sum_{s\le s_0(\gamma)} \expo{ b_0\log n-c\eps_0 ds} \\
&\le \sum_{s\le s_0(\gamma)} \expo{-c\eps_0 ds} \\
&\ll \expo{-c\eps_0 d}
\end{align*}
where in the fourth line we used the definition \eqref{b0} of $b_0$ and took $\gamma$ sufficiently small. 
Combining with the bound \eqref{gexbound} and the lower bound on $d$ completes the proof.
\end{proof}

We have the following quick consequence that with high probability, the size of the neighborhood $\mN_M(S)$ of any small set $S$ is within a logarithmic factor of the upper bound $d|S|$. 

\begin{corollary}[Expansion of small sets]
\label{cor_expand}
For $\gamma>0$, let $\good^{\exp}(\gamma)$ be the event that for every $S\subset[n]$ with $|S|\le s_0(\gamma)= \frac{\log n}{2\gamma}\frac{n}{d}$, we have
$$\big|\mN_M(S) \big| \ge \frac{\gamma}{\log n} d|S|.$$
Assume $d=\omega(\log n)$.
Then there is a constant $c_1>0$ such that for all $\gamma\in (0,c_1]$, 
\begin{equation}
\pro{\good^{\exp}(\gamma)} = 1- O\big( e^{-cd} \big).
\end{equation}
\end{corollary}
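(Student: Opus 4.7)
The plan is to derive the expansion bound directly from \Cref{cor_thin} in the endpoint case $\eps_0 = 1$, exploiting the fact that if $\mN_M(S)$ is small, then the pair $(S, \mN_M(S))$ itself is a thin minor that saturates the edge count at $d|S|$, giving an immediate witness to the bad event.

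To set up, I would put $c_1 := c_0$, where $c_0$ is the constant from \Cref{cor_thin}, and fix $\gamma \in (0, c_1]$. By \Cref{rmk_assumed} we may assume $d \le n/2$, so the hypothesis $d = \omega(\log n)$ gives $\min(d, n-d) = d \ge C_0 \log n$ for $n$ large, which is exactly what is required to invoke \Cref{cor_thin} with $\eps_0 = 1$. That result immediately yields
\begin{equation}
\pr\bigl(\bad(1, \gamma)\bigr) \ll e^{-cd}.
\end{equation}

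The remaining step is a short contrapositive argument. Restricting to the good event $\bad(1, \gamma)^c$, suppose toward contradiction that some nonempty $S \subset [n]$ with $|S| \le s_0(\gamma)$ satisfies $|\mN_M(S)| < \frac{\gamma}{\log n} d |S|$. Setting $B := \mN_M(S)$ gives $|B| < \frac{\gamma}{\log n} d|S|$, so $(S, B) \in \mF_{\thin}(1, \gamma)$. Since every out-edge from a vertex in $S$ lands in $\mN_M(S) = B$, we have
\begin{equation}
e_M(S, B) = \sum_{i \in S} |\mN_M(i)| = d|S| \ge 1 \cdot d|S|,
\end{equation}
so $(S, B)$ witnesses $\bad(1, \gamma)$, a contradiction. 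Hence $|\mN_M(S)| \ge \frac{\gamma}{\log n} d|S|$ for every admissible $S$, and the conclusion follows from the bound on $\pr(\bad(1, \gamma))$.

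I do not anticipate any substantive obstacle here: \Cref{cor_expand} is essentially a reformulation of \Cref{cor_thin} at the extreme parameter $\eps_0 = 1$, with the saturating choice $B = \mN_M(S)$ converting a small-neighborhood failure directly into a thin-minor witness. The only detail worth verifying is that the endpoint $\eps_0 = 1$ is admitted by \Cref{cor_thin}, which is clear from the statement $\eps_0 \in (0, 1]$.
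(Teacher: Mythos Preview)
Your proposal is correct and follows essentially the same route as the paper: show $\good^{\exp}(\gamma)^c \subset \bad(1,\gamma)$ by taking $B = \mN_M(S)$ as the witnessing thin minor, then invoke \Cref{cor_thin} with $\eps_0 = 1$.

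One small correction: your appeal to \Cref{rmk_assumed} to reduce to $d \le n/2$ is not quite right here, since that remark concerns invertibility of $M$ (which is invariant under passing to the complementary matrix), whereas the expansion property $\good^{\exp}(\gamma)$ is not obviously preserved under that operation. The paper instead disposes of the range $d \ge n/2$ directly: since $|\mN_M(S)| \ge d$ trivially, and $|S| \le s_0(\gamma) = \frac{\log n}{2\gamma}\frac{n}{d} \le \frac{\log n}{\gamma}$ in this range, one has $\frac{\gamma}{\log n} d|S| \le d \le |\mN_M(S)|$, so $\good^{\exp}(\gamma)$ holds deterministically.
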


\begin{proof}
Let $\gamma>0$. 
From the crude lower bound $|\mN_M(S)|\ge d$ we have that $\good^{\exp}(\gamma)$ trivially holds if $d\ge n/2$, so assume $\omega(\log n)\le d\le n/2$.
On $\good^{\exp}(\gamma)^c$ there exist $S,B\subset [n]$ with $|S|\le s_0(\gamma)$ and 
$$|B|< \frac{\gamma}{\log n} d|S| = b_0(1,\gamma,|S|)$$
in the notation of \eqref{s0}, \eqref{b0}, such that $e_M(S,B) = d|S|$ (simply from taking $B=\mN_M(S)$). 
Hence, $\good^{\exp}(\gamma)^c$ is contained in the event $\bad(1,\gamma)$ from Corollary \ref{cor_thin}, and the result follows by taking $\gamma$ sufficiently small.
\end{proof}

Now we can prove that Theorem \ref{thm_pm} follows from Theorem \ref{thm_gen}.

\begin{proof}{\emph{of Theorem \ref{thm_pm}}}.
Assume $\omega(\log n)\le d\le n$. 
Write
$M_\pm=M\schur \Xi$
as in \eqref{unsigned}, where $M$ is an rrd matrix and $\Xi$ is matrix of iid uniform signs, independent of $M$.
It suffices to show that the event $\good(d)$ in Theorem \ref{thm_gen} holds with overwhelming probability for $\Sigma=M$.

Conditions (0) and (3) of $\good(d)$ are immediate for $M$ (and hold with probability 1), 
taking $\kappa_3=1$.
From Corollary \ref{cor_expand} we have that condition (1) is satisfied with probability $1-O(e^{-cd})$. 
From Corollary \ref{cor_discrep}, we have that if $\min(d,n-d)=\omega(\log n)$, then condition (2) holds with overwhelming probability with $c_2\in (0,1)$ fixed arbitrarily and $C_2>0$ sufficiently large depending only on $c_2$.

It only remains to show that condition (2) holds with overwhelming probability for the high density case $d=n-O(\log n)$. 
Let $A,B\subset [n]$ such that 
$|A|,|B| \ge C_2\frac{n}{d}\log n$
for some $C_2>0$ to be chosen sufficiently large. 
Let $M'$ denote the complementary rrd matrix with entries $M'(i,j)=1-M(i,j)$. 
Then by $(n-d)$-regularity,
$e_{M'}(A,B)\le (n-d)|A|.$
It follows that
\begin{align*}
e_M(A,B) &= |A||B| - e_{M'}(A,B) \\
&\ge |A|(|B|-(n-d)) \\
&\gg |A||B|\\
&\ge \frac{d}{n}|A||B|
\end{align*}
where in the third line we used the upper bound $n-d=O(\log n)$ and the lower bound on $|B|$, taking $C_2$ sufficiently large. 
It follows that for the case $d=n-O(\log n)$, we may take $c_2\in (0,1)$ sufficiently small such that condition (2) holds with probability 1 for all $n$ sufficiently large.
\end{proof}

\subsection{Concentration of measure}
\label{sec_concentration}

The following concentration inequality for certain functions on the symmetric group will be useful when working with the bijections $\pi$ in the shuffling coupling of Lemma \ref{lem_shuffling}, and follows from the $d=1$ case of Theorem 1.18 in \cite{Cook:discrep}, or alternatively from Proposition 1.1 in \cite{Chatterjee}.

\begin{lemma}[Concentration for the symmetric group]
\label{lem_com}
For $m\ge 1$,  $\pi \in \sym(m)$ a permutation on $[m]$, and $A,B\subset[m]$, denote
$$e_\pi(A,B) = \big|\big\{ i\in A: \pi(i)\in B\big\}\big|.$$
If $\pi$ is a uniform random element of $\sym(m)$, we have that for any $\tau\ge0$,
\begin{equation}
\pr\Big\{ \big| e_\pi(A,B) - |A||B|/m \big| \ge \tau |A||B|/m\Big\} \le 2\exp\bigg\{ -\frac{c\tau^2}{1+\tau} \frac{|A||B|}{m}\bigg\}.
\end{equation}
\end{lemma}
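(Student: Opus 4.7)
The plan is to recognize $e_\pi(A,B)$ as a hypergeometric random variable and then invoke a classical Bernstein-type bound. Specifically, for uniform random $\pi \in \sym(m)$ and fixed $A,B \subset [m]$, the image set $\pi(A) := \{\pi(i) : i \in A\}$ is uniformly distributed over all size-$|A|$ subsets of $[m]$. Since $e_\pi(A,B) = |\pi(A) \cap B|$, this random variable follows the hypergeometric distribution with population $m$, number of successes $|B|$, and sample size $|A|$; in particular $\e\, e_\pi(A,B) = |A||B|/m$.

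With this identification in hand, I would invoke Hoeffding's classical reduction (from his 1963 paper), which shows that the moment generating function of a sum of indicators drawn without replacement from a finite population is dominated by that of the corresponding binomial sum. Combined with Bernstein's inequality for sums of independent Bernoullis, namely $\pr(|X-\mu| \ge t) \le 2\exp\bigl(-\tfrac{t^2/2}{\mu + t/3}\bigr)$ whenever $\e X = \mu$, this yields
\begin{equation*}
\pr\bigl(|e_\pi(A,B) - \mu| \ge \tau\mu\bigr) \le 2\exp\left(-\frac{\tau^2 \mu/2}{1+\tau/3}\right)
\end{equation*}
with $\mu = |A||B|/m$, which matches the claimed form after absorbing constants into $c$.

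Alternatively, since the statement cites both Chatterjee's Proposition 1.1 and the $d=1$ case of Theorem 1.18 of \cite{Cook:discrep}, one may deduce the bound via an exchangeable-pairs / Stein's method argument directly on $\sym(m)$: the key inputs are that $e_\pi(A,B)$ changes by at most $2$ under post-composition of $\pi$ with any transposition, and that its variance is at most $|A||B|/m$. The hypergeometric reduction is cleaner, but both routes produce a Bernstein-type exponent of the required shape. No substantive obstacle is anticipated; the only point requiring care is that $c$ be an absolute constant independent of $\tau$, $|A|$, $|B|$, $m$, which is immediate from the denominator $1+\tau/3$ in the Bernstein bound.
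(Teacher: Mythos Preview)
Your proposal is correct. The paper does not give a self-contained proof: it simply records that the lemma follows from the $d=1$ case of Theorem~1.18 in \cite{Cook:discrep} or from Proposition~1.1 in \cite{Chatterjee}, both of which are general concentration statements for functions on $\sym(m)$ proved via exchangeable pairs. Your primary route---observing that $e_\pi(A,B)=|\pi(A)\cap B|$ is hypergeometric and then invoking Hoeffding's without-replacement-to-with-replacement domination together with Bernstein's inequality for a binomial---is a genuinely different and more elementary argument. It has the advantage of being entirely self-contained and of making the constant explicit, whereas the paper's cited results are black boxes here but apply to a much broader class of statistics on $\sym(m)$ (in particular, statistics that are not expressible as $|\pi(A)\cap B|$ for fixed $A,B$). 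Your alternative sketch via transposition-Lipschitz bounds and Stein's method is essentially the Chatterjee route the paper points to, so you have recovered both the paper's intended argument and a cleaner one.
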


\begin{remark}
Note that the above lemma is essentially the $d=1$ case of Theorem \ref{thm_edge}, the only difference (apart from constants in the exponential) being that we do not need to restrict to any ``good event" like $\good^{\ex}(\delta)$.
\end{remark}

\section{Unstructured null vectors}
\label{sec_unstruct}

In this section we prove Theorems \ref{thm_main} and \ref{thm_gen},
taking as black boxes Propositions \ref{prop_struct} and \ref{prop_structpm} ruling out structured null vectors.
These propositions are proved in Section \ref{sec_struct}.
We remark that the proof of Theorem \ref{thm_gen} is not needed for the proof of Theorem \ref{thm_main}, so the reader who is only interested in the proof of the main theorem can begin at Section \ref{sec_reductions}.

\subsection{Warmup: Proof of Theorem \ref{thm_gen}} 
\label{sec_pm}

We restrict the sample space to the event $\good(d)$ defined in Theorem \ref{thm_gen}.
For convenience, we let $\good_i(d)$ denote the event that condition $i$ of $\good(d)$ holds for $\Sigma$, so that 
$\good(d)=\bigwedge_{i=0}^3\good_i(d).$
In this section we will only use the ``minimum degree" and ``no thin dense minors" properties enjoyed on $\good_0(d)\wedge\good_3(d)$. 
We denote the rows of $\Sigma$ by $r_i$ and the rows of $\Xi$ by $Y_i$, so that the $i$th row of $H$ is $R_i=r_i\schur Y_i$.
Our aim is to control the event
\begin{equation}	\label{mR1def}
\mR_1 =\set{\corank(H)\ge 1}.
\end{equation}
The following lemma reduces this task to bounding the event that a randomly sampled row lands in the span of the remaining rows.
We will extend this to larger corank with Lemma \ref{lem_sampling}.
Recall that a vector $x\in \R^n$ is $k$-sparse if $|\spt(x)|\le k$. 

\begin{lemma}
\label{lem_sampling1}
Let $H$ be a random $n\times n$ matrix with rows $R_i$, $1\le i\le n$. 
For $\eta\in (0,1)$, let $\good_{L}(\eta)$ be the event that $H$ has no non-trivial $(1-\eta)n$-sparse left null vectors. 
For $i\in [n]$ denote $V_i:= \Span(R_j: j\ne i)$, and define the events
\begin{equation}
\mS_i:=\big\{ R_i\in V_i\}.
\end{equation}
Draw $I$ uniformly from $[n]$, independently of $H$.
Then with $\mR_1$ as in \eqref{mR1def} we have
\begin{equation}
\pr\big(\, \mR_1\wedge \good_{L}(\eta) \, \big) \le \frac{1}{1-\eta} \pr\big(\, \mS_I\wedge\good_L(\eta)\,\big).
\end{equation}
\end{lemma}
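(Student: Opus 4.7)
The plan is to apply a standard double-counting argument in the spirit of the row-exchangeability step in Koml\'os' proof sketched in Section \ref{sec_komlos}, the only novelty being that we use the auxiliary uniform index $I$ to compensate for the fact that the rows of $H = \Sigma \odot \Xi$ are not exchangeable (since $\Sigma$ is allowed to be deterministic or otherwise not exchangeable).

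The key observation is the following pointwise inequality on the sample space. On the event $\mR_1 \wedge \good_L(\eta)$, the rows of $H$ are linearly dependent, so there exists a nonzero left null vector $u \in \R^n$ satisfying $\sum_{i=1}^n u(i) R_i = 0$. By definition of $\good_L(\eta)$ no such $u$ can be $(1-\eta)n$-sparse, so $|\spt(u)| > (1-\eta)n$. For any $i \in \spt(u)$ we may solve
$$R_i \;=\; -\frac{1}{u(i)} \sum_{j \neq i} u(j) R_j \;\in\; V_i,$$
so the event $\mS_i$ holds. Hence the number of indices $i \in [n]$ for which $\mS_i$ holds is at least $|\spt(u)| > (1-\eta)n$, which yields the pointwise bound
$$\sum_{i=1}^n \un_{\mS_i \wedge \good_L(\eta)} \;\ge\; (1-\eta) n \cdot \un_{\mR_1 \wedge \good_L(\eta)}.$$

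Taking expectations and using that $I$ is uniform on $[n]$ and independent of $H$, the left-hand side equals $n \cdot \pr(\mS_I \wedge \good_L(\eta))$, and rearranging gives the claimed inequality. There is no real obstacle here — the argument is essentially cosmetic, replacing a use of row exchangeability with the averaging provided by the independent uniform index $I$.
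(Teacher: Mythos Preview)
Your proof is correct and essentially identical to the paper's: both observe that on $\mR_1\wedge\good_L(\eta)$ any left null vector has support exceeding $(1-\eta)n$, deduce that $\mS_i$ holds for at least $(1-\eta)n$ indices $i$, and then take expectations of the resulting pointwise inequality (the paper phrases this as ``double counting''). Your version is slightly more explicit in writing out $R_i=-\frac{1}{u(i)}\sum_{j\ne i}u(j)R_j$, but there is no substantive difference.
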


\begin{proof}
On $\mR_1\wedge \good_L(\eta)$, $H$ has a left null vector with support of size at least $(1-\eta)n$.
It follows that on this event, $\mS_i$ holds for at least $(1-\eta)n$ values of $i\in [n]$. 
By double counting,
\begin{equation}
(1-\eta)n \pr\big(\mR_1\wedge \good_L(\eta)\big) \le \sum_{i=1}^n \pr\big( \mS_i\wedge \good_L(\eta)\big)
\end{equation}
and the result follows by rearranging. 
\end{proof}

From Proposition \ref{prop_structpm} we have that $\good_\pm^{\sparse}(\eta)$ holds 
with probability $1-O(n^{-100})$
for $H=\Sigma\schur\Xi$ for any 
$\eta\in [C_1'd^{-1/4},1]$, where $C_1'>0$ is a sufficiently large absolute constant.
Since $\good^{\sparse}_\pm(\eta)$ is simply the event that $\good_L(\eta)$ holds for $H$ and $H^\tran$, by the above lemma it suffices to show
\begin{equation}
\pr\big( \mS_I\wedge \good^{\sparse}_\pm(\eta)\big) \ll \kappa_3\eta + d^{-1/2}
\end{equation}
for arbitrary $\eta\in [C_1'd^{-1/4}, 0.1]$ (say), where $I$ is drawn uniformly of $[n]$, independently of $\Sigma$ and $\Xi$.
From now on we restrict the sample space to $\good^{\sparse}_\pm(\eta)$ for $\eta$ in this range, in order to lighten the notation.

Draw $u$ uniformly from the unit sphere in $V_I^\perp$, 
in a way such that $u$, $r_I$ and $Y_I$ are jointly independent conditional on $I$ and the remaining rows of $\Sigma$ and $\Xi$.
Now it would be enough to show
\begin{equation}
\pr\Big\{(r_I\schur Y_I)\cdot u = 0\Big\} \ll \kappa_3\eta + d^{-1/2}.
\end{equation}
From Theorem \ref{thm_erdos} we have
\begin{equation}	\label{allgood}
\pros[Y_I]{(r_I\schur Y_I)\cdot u = 0 \,\big|\, \Sigma, I}  \ll \left| \spt(u)\cap \spt(r_I) \right|^{-1/2}
\end{equation}
so we need to argue that $\spt(u)$ and $\spt(r_I)=\mN_\Sigma(I)$ have large overlap. 

By our restriction to $\good_0(d)$ we have $|\mN_\Sigma(i)|\ge d$ for all $i\in [n]$. 
We identify the set of undesirable realizations of $u$ as
\begin{equation}
\mH_{\Sigma,I}:= \set{x\in \R^n: \; \big|\spt(x)\cap \mN_\Sigma(I)\big|\le \frac{d}{2}}
\end{equation}
and define the bad event
\begin{equation}
\bad= \big\{ \pros[u]{u\in \mH_{\Sigma,I}}>0\big\}.
\end{equation}
We note that $\bad$ is decided by the randomness of $\Sigma$, $I$, and the rows $(Y_i)_{i\ne I}$ of $\Xi$. 

First we bound $\pro{ \mS_I \wedge\bad }$ using the randomness of $I$ and our restriction to $\good_3(d)\wedge\good^{\sparse}_\pm(\eta)$. 
The crucial observation is that $\mH_{\Sigma,I}$ is a finite union of subspaces, each of co-dimension at least $d/2$:
$$\mH_{\Sigma,I} = \bigcup_{B\subset \mN_\Sigma(I):\, |B|\le \frac{d}{2}} \R^{([n]\setminus \mN_\Sigma(I))\cup B}.$$
Since we picked $u$ according to the surface measure on the unit sphere of $V_{I}^\perp$, it follows that on $\bad$ we actually have
$V_I^\perp \subset \mH_{\Sigma,I}.$
On $\mR_1$ we may pick a nontrivial vector $x\in \ker(H)$ (note that the kernel is nontrivial on this event).
Crucially, we may do this with $x$ independent of $I$.
We have
$$x\in \ker(H) \subset V_I^\perp$$
and so on $\bad$ we have 
\begin{equation}
x\in \mH_{\Sigma,I}.
\end{equation}
Summarizing our progress so far,
\begin{equation}		\label{progress}
\pro{ \mS_I \wedge\bad } \le
\pro{ \mR_1 \wedge\big\{x\in \mH_{\Sigma,I}\big\} }
\end{equation}
where we used that $\mS_I\subset \mR_1$.
Letting
\begin{equation}
S_\Sigma(x) = \set{i\in [n]: \big|\mN_\Sigma(i)\cap \spt(x)\big| \le \frac{d}{2}}
\end{equation}
we have
\begin{align*}
|S_\Sigma(x)|\frac{d}{2} &< \sum_{i\in S_{\Sigma}(x)} \big|\mN_\Sigma(i)\cap x^{-1}(0)\big| \\
&= e_\Sigma\big(S_\Sigma(x),x^{-1}(0)\big) \\
& \le \kappa_3d|x^{-1}(0)|
\end{align*}
where in the last line we applied our restriction to $\good_3(d)$.
By our restriction to $\good_\pm^{\sparse}(\eta)$ we conclude
\begin{equation}
|S_\Sigma(x)| \le 2\kappa_3|x^{-1}(0)| \le 2\kappa_3\eta n.
\end{equation}
It follows that conditional on $\Sigma$ and $\Xi$ such that $\mR_1$ holds, 
$$\pros[I]{x\in \mH_{\Sigma,I}} = \pros[I]{I\in S_\Sigma(x)} \le 2\kappa_3\eta$$
and so we conclude from \eqref{progress} that
\begin{equation} 	\label{beta}
\pro{ \mS_I \wedge\bad } \le 2\kappa_3\eta.
\end{equation}

It remains to bound $\pro{ \mS_I\wedge\bad^c }.$
Condition on $I$, $\Sigma$ and $(Y_i)_{i\ne I}$ such that $\bad$ does not hold.
Off a null event we may assume that $u\notin \mH_{\Sigma,I}$. 
Now since 
$$\big|\spt(u)\cap \mN_\Sigma(I)\big| > \frac{d}{2}$$
applying \eqref{allgood} we have  
\begin{align*}
\pr\big( \mS_I\wedge\bad^c \big) &\le \pro{\big\{ (r_I\schur Y_I)\cdot u=0\big\} \wedge \big\{ u\notin\mH_{\Sigma,I} \big\} }\\
&= \e \pr_{Y_I}\Big( (r_I\schur Y_I)\cdot u=0 \,\Big|\, \Sigma, I, (Y_i)_{i\ne I}\Big)\un(u\notin \mH_{\Sigma,I})\\
&\ll d^{-1/2}
\end{align*}
which combines with \eqref{beta} to give the claim. \qed

\subsection{Preliminary reductions}
\label{sec_reductions}

Now we turn to the proof of Theorem \ref{thm_main}.
We may assume 
\begin{equation}	\label{dlub}
C_0\log^2n\le d\le n/2
\end{equation}
(for the upper bound see Remark \ref{rmk_assumed}).
This will allow us to restrict to the following ``good events":
\begin{itemize}
\item By Theorem \ref{thm_codeg}, the event $\good^{\ex}(\delta)$ holds with overwhelming probability for any fixed $\delta\in (0,1)$ independent of $n$
(here we only need $d=\omega(\log n)$).
\item  From Proposition \ref{prop_struct} 
(which we are assuming for now, deferring the prove to Section \ref{sec_struct}) 
\eqref{dlub} implies that $\good^{\sls}(\eta)$ holds 
with probability $1-O(n^{-100})$ for any 
$\eta\in [C_1d^{-c_0},1]$.
Moreover, we note that  $\good^{\sls}(\eta)\subset\good^{\sparse}(\eta)$, the event that $M$ has no left or right $(1-\eta)n$-sparse null vectors. 
\end{itemize}
We leave the parameters $\delta,\eta\in (0,1)$ unspecified for now.

For $k\in [n]$ define the event
\begin{equation}		\label{Rkdef}
\mR_{k}:=\set{ \corank(M)\ge k}.
\end{equation}
Our aim is to bound $\pr(\mR_1)$. 
Unlike the proof for $H$ in the previous section, we will need to separately handle $\mR_2$ and $\mR_1\setminus \mR_2$ by different arguments.
The argument for $\mR_2$ will follow a similar approach to the proof of Theorem \ref{thm_gen}, after invoking the shuffling coupling to inject iid signs. 
Controlling $\mR_1\setminus \mR_2$ will require more care.
Theorem \ref{thm_main} follows from the next proposition and Proposition \ref{prop_struct}.

\begin{proposition}
\label{prop_red}
For all $\eta\in (0,1]$ 
we have
\begin{equation}	\label{corank2}
\pro{\mR_2 \wedge \good^{\sls}(\eta)} \ll \eta+ d^{-1/2}
\end{equation}
and
\begin{equation}		\label{corank1}
\pro{ \mR_1\wedge \mR_2^c \wedge \good^{\sls}(\eta)} \ll \eta+ d^{-1/2}.
\end{equation}
\end{proposition}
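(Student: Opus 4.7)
The plan is to handle the two bounds separately but with a common backbone: in each case, I would sample a pair of row indices, apply the shuffling coupling of Lemma \ref{lem_shuffling} to inject iid signs, and then apply Erd\H{o}s' anti-concentration estimate (Theorem \ref{thm_erdos}) to a random walk produced by the shuffling. For \eqref{corank2}, I would begin with a two-row analogue of Lemma \ref{lem_sampling1}: on $\mR_2\wedge\good^{\sls}(\eta)$ the left null space of $M$ has dimension at least two and, by $\good^{\sls}(\eta)$, any null vector has support of size at least $(1-\eta)n$, so a double-counting argument reduces, up to a factor depending only on $\eta$, to bounding the probability of $\set{R_{I_1},R_{I_2}\in V_{(I_1,I_2)}}$ for a uniformly sampled ordered pair $(I_1,I_2)$ of distinct row indices. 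Conditioning on $(I_1,I_2)$ and the remaining rows, I would draw $u$ uniformly from the unit sphere of $V_{(I_1,I_2)}^\perp$ (which has dimension at least $2$ on $\mR_2$), independently of $R_{I_1}$ and $R_{I_2}$, and pass to the shuffled matrix $\tM$ at rows $(I_1,I_2)$. The inner product $\tR_{I_1}\cdot u$ is then a random walk of the form \eqref{toyrw} in the shuffling signs $\xi$, and Theorem \ref{thm_erdos} bounds its pointwise concentration by the inverse square root of the number of $j\in\Ex_M(I_1,I_2)$ with $u(j)\neq u(\pi(j))$. To lower bound this count, I would combine $|\Ex_M(I_1,I_2)|\asymp d$ from Theorem \ref{thm_codeg}, the small-level-set property of $u$ inherited from $\good^{\sls}(\eta)$ (extended slightly to null vectors of the $(n-2)\times n$ submatrix of $M$), and the algebraic argument sketched in Section \ref{sec_sparsity}: spherical symmetry of $u$ combined with the randomness of $(I_1,I_2)$ forces the \emph{bad} realizations of $u$, in which a level set covers most of $\Ex_M(I_1,I_2)$, into a proper algebraic subvariety of the sphere, hence a null event. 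Combined with Lemma \ref{lem_com} to absorb the residual randomness of $\pi$, this yields the overall bound $O(\eta+d^{-1/2})$.

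For \eqref{corank1} the approach must change, since on $\mR_1\setminus\mR_2$ the orthogonal complement $V_{(I_1,I_2)}^\perp$ is only two-dimensional and there is no longer room for the spherical-measure argument above. Following the hint at the end of Section \ref{sec_inject0}, I would sample $(I_1,I_2)$, apply shuffling to produce $\tM$, and consider a $2\times 2$ determinant involving the coordinates of the pair $(\tR_{I_1},\tR_{I_2})$ in a basis $\set{u_1,u_2}$ of $V_{(I_1,I_2)}^\perp$ chosen so as to depend only on the rows $(R_i)_{i\notin\set{I_1,I_2}}$. On $\mR_1\setminus\mR_2$ this determinant must vanish (since the row span is only one dimension larger than $V_{(I_1,I_2)}$), and as a function of $\xi$ it can be reorganized into a linear random walk whose step sizes involve differences $u_k(j)-u_k(\pi(j))$ for $k=1,2$ and $j\in\Ex_M(I_1,I_2)$. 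The small-level-set property of $u_1,u_2$ together with Lemma \ref{lem_com} ensures non-degeneracy of this walk off a negligible event, and Theorem \ref{thm_erdos} again produces the $O(d^{-1/2})$ bound, with an additional $O(\eta)$ absorbed by double-counting as in the corank 2 case.

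The main obstacle is the algebraic step in the corank 2 case: the coupling must be set up so that $u$ is drawn from a continuous (spherical) distribution conditionally independent of the shuffling variables $\xi,\pi$, while still allowing the randomness of $(I_1,I_2)$ to escape the bad set via a measure-zero argument. A secondary technical issue is reconciling the level-set formulation of Proposition \ref{prop_struct}, which concerns null vectors of $M$ itself, with the auxiliary vectors $u$, $u_1$, $u_2$ that arise as null vectors of deleted-row submatrices; this should follow by a small extension of Proposition \ref{prop_struct} to the $(n-2)\times n$ setting, but care will be needed to ensure that the resulting level-set bound is uniform in the random choice of $(I_1,I_2)$.
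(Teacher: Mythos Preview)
Your outline follows the paper's architecture, but there is a genuine gap in how you plan to certify that $u$ (and $u_1,u_2$) has small level sets. You propose to extend Proposition~\ref{prop_struct} to null vectors of the $(n-2)\times n$ submatrix, and you flag this as a concern; the paper avoids this extension entirely, and the mechanism for doing so is the main idea you are missing. For \eqref{corank2}: the bad set $\mH'_{M,I_1,I_2}$ of vectors with a level set meeting \emph{both} $\mN_M(I_1)$ and $\mN_M(I_2)$ in more than $d/100$ points is a finite union of proper subspaces, so if $u$ hits it with positive spherical measure then in fact $V_{(I_1,I_2)}^\perp\subset\mH'_{M,I_1,I_2}$, and in particular $\ker(M)\subset\mH'_{M,I_1,I_2}$. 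On $\mR_2$ the paper then picks a genuine null vector $x\in\ker(M)$ \emph{independent of $I_1,I_2$}, applies $\good^{\sls}(\eta)$ directly to $x$ (no extension needed), and uses the randomness of $(I_1,I_2)$ to bound $\pr_{I_1,I_2}(x\in\mH'_{M,I_1,I_2})\ll\eta$ by summing over the level sets of $x$. Your plan to argue about $u$ directly runs into the problem that $u$ depends on $(I_1,I_2)$, so you cannot cleanly play the index randomness against it; passing to $x$ is exactly what decouples these.

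For \eqref{corank1} the same trick reappears: the $2\times2$ determinant organizes as a walk $W(v)$ with $v=[(R_{I_1}+R_{I_2})\cdot u_2]u_1-[(R_{I_1}+R_{I_2})\cdot u_1]u_2$, and on $\{D_M(I_1,I_2)=0,\ \corank(M)=1\}$ one checks $v\in\ker(M)=\langle x\rangle$, so again $\good^{\sls}(\eta)$ is applied to $x$, not to $u_1,u_2$. You also omit two technical points that are essential here. First, the degenerate event $\bad_0=\{v=0\}=\{R_{I_1}+R_{I_2}\in V_{(I_1,I_2)}\}$ must be excised; on $\mR_2^c$ this forces the unique (up to scaling) left null vector $y$ to satisfy $y(I_1)=y(I_2)$, which has probability $O(\eta)$ in $(I_1,I_2)$ by $\good^{\sls}(\eta)$. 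Second, the substitution $M\to\tM$ is only legitimate because $v$ and the relevant bad set are \emph{invariant under the shuffling}: both depend only on $R_{I_1}+R_{I_2}$ and on $\Ex_M(I_1,I_2)\cup\Ex_M(I_2,I_1)$, which the shuffling preserves. Without this invariance you could not transfer the structural restriction on $v$ from $M$ to $\tM$ before applying Theorem~\ref{thm_erdos}.
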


We will use the following extension of Lemma \ref{lem_sampling1} for controlling the event $\mR_k$ when there are no sparse null vectors. 
We only need this for $k=2$, but the result for larger values of $k$ comes with little additional effort.

\begin{lemma}[Control by random sampling]				
\label{lem_sampling}
Assume $M$ is a random $n\times n$ matrix with rows $R_i$, $1\le i\le n$. 
Let $\mR_k$ be as in \eqref{Rkdef} and for $\eta\in (0,1)$ let $\good_L(\eta)$ be the event that $M$ has no non-trivial $(1-\eta)n$-sparse left null vectors. 
For an arbitrary $k$-tuple of row indices $(i_1,\dots, i_k)\in [n]^k$, denote the subspaces 
\begin{equation}	\label{Vcomp}
V_{(i_1,\dots,i_k)} = \Span\Big(R_i:i\notin\set{i_l}_{l=1}^k\Big)
\end{equation}
and the events
$$\mS_{(i_1,\dots,i_k)}=\set{R_{i_1},\dots,R_{i_k}\in V_{(i_1,\dots,i_k)}}.$$
For $k\in [n/2]$, let $\mI=(I_1,\dots,I_k)$ be a vector of indices sampled uniformly without replacement from $[n]$, independently of $M$.
Then if $\eta\in \big(0, \frac1{2}\big)$, we have
$$\pr\big(\mR_k\wedge \good_L(\eta)\big) \le (1-2\eta )^{-k}\pr\Big(\mS_\mI\wedge \good_L(\eta)\Big).$$
\end{lemma}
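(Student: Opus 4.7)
The plan is to mimic the conditional-probability argument of Lemma~\ref{lem_sampling1}, replacing its input ``$\mS_i$ holds for at least $(1-\eta)n$ values of $i$'' with its $k$-fold sequential analogue. Concretely, I will establish the pointwise bound $\pros[\mI]{\mS_\mI \mid M} \ge 1 - 2\eta k$ on $\mR_k \wedge \good_L(\eta)$, take expectation against the indicator of this event, and rearrange, using that $\mS_\mI \wedge \mR_k \wedge \good_L(\eta) \subseteq \mS_\mI \wedge \good_L(\eta)$.

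The key reformulation I would use is as follows: letting $N_L \subset \R^n$ denote the left null space of $M$ and $\pi_\mI : \R^n \to \R^\mI$ the coordinate projection onto the $k$ indices in $\mI$, the event $\mS_\mI$ holds iff $\pi_\mI(N_L) = \R^\mI$. Indeed, $R_{I_l} \in V_\mI$ is equivalent to the existence of a row relation $R_{I_l} = \sum_{i \notin \mI} c_i R_i$, witnessed by some $v \in N_L$ with $v(I_l) \ne 0$ and $v|_{\mI \setminus \{I_l\}} = 0$, i.e.\ by $\pi_\mI(v) \in \R\cdot e_{I_l}$; requiring this for every $l \in [k]$ forces the full basis $(e_{I_l})_{l \in [k]}$ to lie in $\pi_\mI(N_L)$, which gives the claimed surjectivity.

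On $\mR_k$ one has $\dim N_L \ge k$, so I fix (conditional on $M$) a $k$-dimensional subspace $N_0 \subseteq N_L$ and draw $I_1, \ldots, I_k$ one at a time. Set
\begin{equation*}
N_0^{(l)} := \big\{ v \in N_0 : v(I_1) = \cdots = v(I_l) = 0 \big\}.
\end{equation*}
Call step $l$ \emph{good} if $\pi_{I_l}|_{N_0^{(l-1)}} \not\equiv 0$; each good step drops $\dim N_0^{(l)}$ by exactly one, and $k$ good steps in a row yield $N_0^{(k)} = 0$, forcing $\pi_\mI|_{N_0}$ to be an isomorphism $N_0 \to \R^\mI$ and thus $\mS_\mI$ to hold. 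At step $l$, assuming all prior steps were good, $N_0^{(l-1)}$ is nonzero, so I may pick any $0 \ne v \in N_0^{(l-1)} \subseteq N_L$; then $\good_L(\eta)$ gives $|v^{-1}(0)| \le \eta n$, and the bad choices of $I_l$ lie in this zero set. Since $I_l$ is uniform over a set of size $n - l + 1 \ge n - k + 1 \ge n/2 + 1$ (using $k \le n/2$), the conditional probability of a bad step $l$ is at most $2\eta$, and a union bound over $l \in [k]$ produces the desired $1 - 2\eta k$ lower bound.

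The one place to be careful — and the main subtlety — is that $\good_L(\eta)$ must be invoked not for a single distinguished left null vector of $M$ but for the sequentially-chosen witness vector in each successive $N_0^{(l-1)} \subseteq N_L$; this is legitimate precisely because the event constrains \emph{every} nonzero vector in $N_L$ rather than only some fixed basis. Once this together with the dimension-drop bookkeeping is in place, the rest is the same arithmetic as in the $k=1$ case.
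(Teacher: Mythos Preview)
Your proposal is correct and follows essentially the same approach as the paper: both condition on $M$ in $\mR_k\wedge\good_L(\eta)$, sequentially reveal $I_1,\dots,I_k$, and at each step use a nonzero left-null vector vanishing on the previously chosen indices together with $\good_L(\eta)$ to bound the probability of a ``bad'' draw by $2\eta$. The only cosmetic difference is that the paper phrases the argument via an explicit row-reduction producing an upper-triangular $k\times k$ matrix of null-vector coordinates, while you phrase it via the equivalent dimension-drop of the kernels $N_0^{(l)}$ and surjectivity of the coordinate projection $\pi_\mI|_{N_0}$.
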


\begin{proof}
Since
$$\pr\Big(\mS_\mI\wedge \good_L(\eta)\Big) = \pr\Big(\mS_\mI \,\Big|\, \mR_k\wedge\good_L(\eta)\Big) \pr\Big(\mR_k\wedge \good_L(\eta)\Big)$$
it suffices to show that for fixed $M$ such that $\mR_k\wedge \good_L(\eta)$ holds, we have
\begin{equation}
\pr_{\mI}\big(\mS_\mI\big)\ge (1-2\eta )^{-k}.
\end{equation}

Condition on such $M$. 
We may pick $k$ linearly independent left null vectors $y_1,\dots, y_k$, so that for each $j\in [k]$, 
$$\sum_{i\in [n]}y_j(i)R_i=0.$$
Next we apply row reduction to the $k\times n$ matrix with rows $y_j$.
For $\mS_\mI$ to hold, it suffices that there exist $Z=(z_1,\dots, z_k)\in \R^{n\times k}$ with $\Span(z_1,\dots, z_k)=\Span(y_1,\dots, y_k)$ such that the $k\times k$ matrix 
\begin{equation}		\label{zmat}
Z_{\mI\times [k]}= (z_j(I_l))_{1\le j,l,\le k}
\end{equation}
is upper triangular with nonzero diagonal entries. 
Indeed, this implies that $R_{I_1},\dots,R_{I_k}$ can each be expressed as linear combinations of the rows $\set{R_i: i\notin \set{I_1,\dots,I_k}}$. 

Set $z_1=y_1$, and let $\bad_1=\set{z_1(I_1)=0}$. 
By our restriction to $\good_L(\eta)$ we have
$$\pros[I_{1}]{\bad_{1}} \le \frac{|z_{1}^{-1}(0)|}{n}\le \eta.$$
For $j\in [k-1]$, having defined linearly independent vectors $z_1,\dots, z_j$ and events $\bad_1, \dots, \bad_j$, on $\bigwedge_{1\le l\le j}\bad_l^c$ we can find 
$z_{j+1}\in \Span(y_{j+1},z_j,\dots, z_1)$
such that $z_{j+1}(I_l)=0$ for all $1\le l\le j$ (by linear independence). 
Let  $\bad_{j+1}=\set{z_{j+1}(I_{j+1})=0}$. 
Since $z_{j+1}\in \ker(M^\tran)$, by our restriction to $\good_L(\eta)$ we have
$$\pr_{I_{j+1}}\bigg(\bad_{j+1}\Big|\bigwedge_{1\le l\le j}\bad_l^c\bigg) \le \frac{|z_{j+1}^{-1}(0)|}{n-j}\le 2\eta$$
(using the upper bound on $k$). 
Applying the above bound iteratively with Bayes' rule we conclude that $\bigwedge_{1\le l\le k}\bad_l^c$ holds with probability at least $(1-2\eta)^k$ in the randomness of $\mI$, and on this event the matrix \eqref{zmat} has the desired properties. 
\end{proof}

\subsection{Injecting a random walk}
\label{sec_inject}

We now turn to Proposition \ref{prop_red}.
Without the randomness of the independent signs enjoyed by $H$, we must use the shuffling coupling of Lemma \ref{lem_shuffling} to express $\mS_\mI$ as the event that a random walk lands at a particular point. 
We define a coupled pair of rrd matrices $(M,\tM)$ as in that lemma, but with the pair of rows selected randomly. 
That is, we draw:
\begin{enumerate}
\item an rrd matrix $M$, 
\item $I_1,I_2\in [n]$ sampled uniformly without replacement from $[n]$, independently of $M$,
\item a uniform random bijection
$$\pi:\Ex_M(I_1,I_2)\rightarrow \Ex_M(I_2,I_1),$$
\item a sequence $\xi:[n]\rightarrow \set{\pm1}$ of iid uniform signs independent of all other variables. 
\end{enumerate}
We form $\tM$ by performing a shuffling on $M$ at the rows $(I_1,I_2)$ with respect to $\pi$, $\xi$. 
By Lemma \ref{lem_shuffling} and conditioning on $I_1,I_2$ we have that $M\eqd\tM$. 

Now we wish to control the events $\mS_{I_1}=\set{R_{I_1}\in V_{I_1}}$
and 
$\mS_{(I_1,I_2)}=\set{R_{I_1},R_{I_2}\in V_{(I_1,I_2)}}.$
Note that on $\mS_{(I_1,I_2)}$, $R_{I_1}$ and $R_{I_2}$ are orthogonal to any vector in the orthocomplement of $V_{(I_1,I_2)}$. 
We are hence interested in the dot products $R_{I_1}\cdot u$, $R_{I_2}\cdot u$ for $u$ taken from the unit sphere (say) of $V_{(I_1,I_2)}^\perp$. 
Let us examine the joint distribution of these dot products when we replace $M$ by $\tM$. 
Letting $\tR_i$ denote the $i$th row of $\tM$, we can express
\begin{align}
\left( \begin{array}c \tR_{I_1}\cdot u\\ \tR_{I_2}\cdot u  \end{array} \right)
&= \left( \begin{array}c
\sum_{j\in \mN_{\tM}(I_1)} u(j) \\
\sum_{j\in \mN_{\tM}(I_2)} u(j)
\end{array} \right) \notag\\
&= 
\left( \begin{array}c
\sum_{j\in \Co_M(I_1,I_2)}u(j)+\sum_{j\in \Ex_{\tM}(I_1,I_2)}u(j) \\
 \sum_{j\in \Co_M(I_1,I_2)}u(j)+\sum_{j\in \Ex_{\tM}(I_2,I_1)}u(j)
\end{array} \right) \notag\\
&= \sum_{j\in \Co_M(I_1,I_2)}u(j)  {1\choose 1} + \sum_{j\in \Ex_M(I_1,I_2)} 
\left( \begin{array}c
 u\big(j\un(\xi(j)=+1)+\pi(j)\un(\xi(j)=-1)\big) \\
  u\big(\pi(j)\un(\xi(j)=+1) + j\un(\xi(j)=-1)\big) 
\end{array} \right) \notag\\
&= \sum_{j\in \Co_M(I_1,I_2)}u(j)  {1\choose 1}
+ \sum_{j\in \Ex_M(I_1,I_2)}\frac{u(j)+u(\pi(j))}{2}{1\choose 1} + \xi(j)\frac{u(j)-u(\pi(j))}{2}{1\choose -1} \notag\\
&= \bigg(\frac{1}{2}(R_{I_1}+R_{I_2})\cdot u \bigg) {1\choose 1} + \Bigg(\sum_{j\in \Ex_M(I_1,I_2)}\xi(j) \partial _j^\pi(u)\Bigg){1\choose -1}	\label{rwalk1} \\
&=: A(u){1\choose 1}+W(u){1\choose -1}		\label{rwalk2}
\end{align}
where in the penultimate line we have defined 
\begin{equation}
\partial_j^\pi(u)=\frac{u(j)-u(\pi(j))}{2}.
\end{equation}

Note that the term $A(u)$ is fixed by conditioning on $M, I_1,I_2$.  
Furthermore, the sequence $(\partial_j^\pi(u))_{j\in \Ex_M(I_1,I_2)}$ is fixed by additionally conditioning on $\pi$. 
Hence, conditional on $M,I_1,I_2,\pi$, in the randomness of the $\xi(j)$ this pair of dot products is a random walk in the $(1, -1)$ direction with steps $\partial_j^{\pi}(u)$. 

The following lemma isolates the role of the randomness of the signs $\xi(j)$ and reduces the problem to the study of structural properties of the normal vector $u$. 
While it is stated for an arbitrary fixed pair of row indices $(i_1,i_2)$, it can be applied to the random pair $(I_1,I_2)$ after conditioning.

\begin{lemma}[The role of the signs $\xi(j)$]		
\label{lem_rwalk}
For $u\in \R^n$, $i_1,i_2\in [n]$ distinct, and a bijection
$\pi:\Ex_M(i_1,i_2)\rightarrow \Ex_M(i_2,i_1)$,
define
\begin{equation}	\label{stepsdef}
\Steps(u)=\Steps_{M,\pi}^{(i_1,i_2)}(u):=\set{j\in \Ex_M(i_1,i_2): u(j)\ne u(\pi(j))}.
\end{equation}
Then with $(M,\tM)$ coupled as in Lemma \ref{lem_shuffling} and $u$ deterministic or random depending only on $(R_i:i\notin \set{i_1,i_2})$ we have
\begin{equation}
\pr_{\xi}\big(\tR_{i_1}\cdot u=0\big) \ll |\Steps(u)|^{-1/2}.
\end{equation}
\end{lemma}

\begin{proof}
From the representation \eqref{rwalk1} we have
$$\tR_{i_1}\cdot u = A(u) + \sum_{j\in \Ex_M(i_1,i_2)}\xi(j) \partial _j^\pi(u)$$
and the claim follows by conditioning on $M, \pi$ and applying Theorem \ref{thm_erdos}.
\end{proof}

\subsection{Ruling out corank $\ge 2$}
\label{sec_corank2}

In this section we establish the bound \eqref{corank2} from Proposition \ref{prop_red}.
By increasing the hidden constant in \eqref{corank2} we may assume $\eta$ is at most a sufficiently small absolute constant. 

From Lemma \ref{lem_sampling}, for $\eta$ sufficiently small it suffices to bound $\pr(\mS_{(I_1,I_2)})$. 
Conditional on $M,I_1,I_2$, let $u$ be drawn from the uniform surface measure of the unit sphere in $V_{(I_1,I_2)}^\perp$, independently of $\pi, R_{I_1}, R_{I_2}$. 
We have
\begin{align*}
\pr(\mS_{(I_1,I_2)})& \le \pro{\tR_{I_1}\in V_{(I_1,I_2)}} \\
&\le \pro{\tR_{I_1}\cdot u=0}.
\end{align*}
We want to bound this using Lemma \ref{lem_rwalk}, so we will need to argue that 
the set $\Steps_{M,\pi}^{(I_1,I_2)}(u)$ defined there is large.
For this task we use the randomness of $u,I_1,I_2,\pi$, and restrict $M$ to the good events $\good^{\ex}(\delta)$ and $\good^{\sls}(\eta)$. 

First we identify the set of undesirable realizations of $u$. 
Let
\begin{equation}
\mH'_{M,I_1,I_2}=\set{x\in \R^n:\; \exists \lambda\in \R \mbox{ with } \min_{l=1,2}|\mN_M(I_l)\cap x^{-1}(\lambda)|> d/100}.
\end{equation}
That is, $\mH'_{M,I_1,I_2}$ is the set of vectors with a level set intersecting at least $1\%$ of the support of both $R_{I_1}$ and $R_{I_2}$. 
Note that $\mH'_{M,I_1,I_2}$ is a finite union of proper subspaces of $\R^n$. 
Indeed, we may express
$$\mH'_{M,I_1,I_2}=\bigcup_{(T_1,T_2)} \mH_{T_1\cup T_2}$$
where the union ranges over pairs of subsets $T_1\subset \mN_M(I_1), T_2\subset \mN_M(I_2)$ of size at least $d/100$, and $\mH_T$ denotes the subspace of vectors that are constant on $T$. 
Define the bad event
\begin{equation} \label{bmi}
\bad' = \set{\pr_u(u\in \mH'_{M,I_1,I_2})>0}.
\end{equation}

Since $\mH'_{M,I_1,I_2}$ is a finite union of proper subspaces, and $u$ is drawn from the uniform surface measure of the subspaces $V_{(I_1,I_2)}^\perp$, it follows that if $\bad'$ holds then we actually have the inclusion
$$V_{(I_1,I_2)}^\perp \subset \mH'_{M,I_1,I_2}.$$
Note also that $\ker(M)\subset V_{(I_1,I_2)}^\perp$.
On $\mR_2$, we may fix an arbitrary nontrivial element $x\in \ker(M)$ (note that the kernel is nonempty on this event), independent of $I_1,I_2$. 
Now we have
\begin{align*}
\pr(\mR_2\wedge\bad') &\le \pr(\mR_2\wedge\set{x\in \mH'_{M,I_1,I_2}}).
\end{align*}
We will bound the latter quantity using the randomness of $I_1,I_2$. 
For $\lambda\in \R$, let
$$S_M(x,\lambda)=\set{i\in [n]: |\mN_M(i)\cap x^{-1}(\lambda)|\ge d/100}.$$
We can control the size of these sets using only a crude bound on edge counts:
$$|S_M(x,\lambda)|\frac{d}{100}< e_M(S_M(x,\lambda),x^{-1}(\lambda)) \le d|x^{-1}(\lambda)|$$
whence
$$|S_M(x,\lambda)|\le 100 |x^{-1}(\lambda)|.$$
Now for $M$ such that $\good^{\sls}(\eta)$ holds we have $|x^{-1}(\lambda)|\le \eta n$ for all $\lambda\in \R$.
Conditional on $M$ such that $\mR_2$ and $\good^{\sls}(\eta)$ hold (which fixes $x\ne 0$), we can bound
\begin{align*}
\pr_{I_1,I_2}\big(x\in \mH'_{M,I_1,I_2}\big) &= \pr_{I_1,I_2}\big(\exists \lambda\in \R: I_1,I_2\in S_M(x,\lambda)\big) \\
&\le \sum_{\lambda: x^{-1}(\lambda)\ne \phi}\pr_{I_1,I_2}(I_1,I_2\in S_M(x,\lambda)) \\
&\ll \sum_{\lambda:x^{-1}(\lambda)\ne \phi} \left(\frac{|x^{-1}(\lambda)|}{n}\right)^2\\
&\le \eta\sum_{\lambda:x^{-1}(\lambda)\ne \phi} \frac{|x^{-1}(\lambda)|}{n} \\
&\le \eta.
\end{align*}
Undoing the conditioning on $M$, we have shown that
\begin{equation}		\label{bmibound}
\pr(\bad'\wedge \mR_2\wedge \good^{\sls}(\eta)) \ll \eta.
\end{equation}

It remains to bound
$\pro{\big\{R_{I_1}\cdot u=0\big\} \setminus \bad'}.$
Condition on $M,{I_1,I_2}$ such that $\bad'$ does not hold. 
Off a null event we may assume that $u\notin \mH'_{M,{I_1,I_2}}$. 
That is, for every $\lambda\in \R$, we may assume 
\begin{equation}	\label{alternative}
|\mN_M(I_1)\cap u^{-1}(\lambda)|\le d/100\quad\mbox{or} \quad |\mN_M(I_2)\cap u^{-1}(\lambda)|\le d/100.
\end{equation}
We will now get a lower bound on $|\Steps(u)|$ (as defined in \eqref{stepsdef}). 
It will be more convenient to work with the complementary set
\begin{align}
\Flats(u) &:= \Ex_M(I_1,I_2)\setminus \Steps(u) \label{flatsdef}\\
&= \set{j\in \Ex_M(I_1,I_2): u(j)=u(\pi(j))}.	\notag
\end{align}
We have
\begin{align*}
\e_\pi |\Flats(u)| &= \sum_{j\in \Ex_M(I_1,I_2)} \pr_\pi(\pi(j)\in u^{-1}(u(j))) \\
&= \sum_{j\in \Ex_M(I_1,I_2)} \frac{|u^{-1}(u(j))\cap \Ex_M(I_2,I_1)|}{|\Ex_M(I_2,I_1)|} \\
&= \frac{1}{|\Ex_M(I_1,I_2)|}\sum_{\lambda:u^{-1}(\lambda)\ne \phi} |u^{-1}(\lambda)\cap \Ex_M(I_1,I_2)||u^{-1}(\lambda)\cap \Ex_M(I_2,I_1)|
\end{align*}
where the last line follows from double counting. 
Now we apply \eqref{alternative} to get
\begin{align}
\e_\pi|\Flats(u)| & \le \frac{d}{100}\frac{1}{|\Ex_M(I_1,I_2)|}\sum_{\lambda:u^{-1}(\lambda)\ne \phi} \max\big(|u^{-1}(\lambda)\cap\Ex_M(I_1,I_2)|,|u^{-1}(\lambda)\cap \Ex_M(I_2,I_1)|\big) \notag \\
&\le \frac{d}{100}\frac{1}{|\Ex_M(I_1,I_2)|}\sum_{\lambda:u^{-1}(\lambda)\ne \phi}\big|u^{-1}(\lambda)\cap[\Ex_M(I_1,I_2)\cup \Ex_M(I_2,I_1)]\big|\notag\\
&= \frac{d}{50}.		\label{exflat}
\end{align}
We want to show that $|\Flats(u)|$ is concentrated around its expectation (we only need control on the upper tail). 
In the notation of Lemma \ref{lem_com} we have
$
|\Flats(u)| = e_\pi(A,B)
$
with $A=\Ex_M(I_1,I_2)$ and $B=u^{-1}(u(j_1))$ (which are fixed by conditioning on $M,I_1,I_2$).
Applying Lemma \ref{lem_com} and \eqref{exflat} we conclude that for any $\eps>0$, 
\begin{align}
\pros[\pi]{|\Flats(u)|\ge (1+\eps)\frac{d}{50}} &\le \pros[\pi]{|\Flats(u)|\ge \e_\pi|\Flats(u)|+ \eps\frac{d}{50}} \notag\\
&\ll \expo{ -\frac{c\eps^2d^2/50^2}{\e_\pi|\Flats(u)|+\eps \frac{d}{50}}} \notag\\
&\le \expo{-\frac{c\eps^2}{1+\eps}d}.		\label{flatstail}
\end{align}
On the other hand, on $\good^{\ex}(\delta)$ we have (applying our assumption $d\le n/2$)
$$|\Ex_M(I_1,I_2)|\ge (1-\delta)d\Big(1-\frac{d}{n}\Big)\ge \frac{1-\delta}{2}d$$
so that fixing $\delta\le 1/2$ and $\eps\le 4$ (say), we conclude that on $\bad'^c\wedge\good^{\ex}(\delta)$, except with probability at most $O(\exp(-cd))$ we have
\begin{equation}		\label{stepslb}
|\Steps(u)|=|\Ex_M(I_1,I_2)|-|\Flats(u)|\ge \frac{d}{10}.
\end{equation}

Summarizing our work so far, 
\begin{align*}
\pr\Big[\,\mS_{(I_1,I_2)}\wedge \good^{\sls}(\eta)\wedge\good^{\ex}(\delta)\,\Big] &\le 
\pr\Big[\,\mR_2\wedge\bad'\wedge \good^{\sls}(\eta)\,\Big] + \pr\Big[\,\mS_{(I_1,I_2)}\wedge \bad'^c\wedge \good^{\sls}(\eta)\wedge\good^{\ex}(\delta)\,\Big] \\
&\le \pr\Big[\,\mR_2\wedge\bad'\wedge \good^{\sls}(\eta)\,\Big] + 
\pr\bigg[\,\bad'^c\wedge\set{|\Steps(u)|<\frac{d}{10}}\wedge\good^{\ex}(\delta)\,\bigg] \\
&\quad \quad\quad+\pr\bigg[\,\Big\{\tR_{I_1}\cdot u=0\Big\}\wedge \set{|\Steps(u)|\ge \frac{d}{10}}\,\bigg]\\
&\ll \eta + e^{-cd} + d^{-1/2}
\end{align*}
where in the last line we substituted our bounds \eqref{stepslb} and \eqref{bmibound} and applied Lemma \ref{lem_rwalk}. 
Combined with our estimates for $\good^{\sls}(\eta)$ and $\good^{\ex}(\delta)$, together with Lemma \ref{lem_sampling} we have
\begin{equation}
\pr(\mR_2) \ll \eta+d^{-1/2}
\end{equation}
as desired.

\subsection{Ruling out corank $1$}
\label{sec_corank1}

In this section we establish the bound \eqref{corank1}, which completes the proof of Proposition \ref{prop_red} and hence of Theorem \ref{thm_main}.
By increasing the hidden constant in \eqref{corank1} we may assume $\eta$ is at most a sufficiently small absolute constant. 

By Lemma \ref{lem_sampling} it suffices to bound 
$$\pr(\mS_{I_1}\setminus \mR_2) = \pr\big(R_{I_1}\in V_{I_1}, \;  \corank(M)= 1\big)$$
(taking $\eta$ smaller if necessary).
 We cannot simply condition on all rows but $R_{I_1}$ and pick a normal vector $u\in V_{I_1}^\perp$, since this conditioning fixes $R_{I_1}$ as well by $d$-regularity. 
Instead, we will leave $R_{I_2}$ random and express the event $\mS_{I_1}$ in terms of a certain $2\times2$ determinant involving the rows $R_{I_1},R_{I_2}$.
We now have the advantage that on the bad event, we can condition on a unique (up to dilation) null vector $x$ of $M$, which is independent of ${I_1,I_2}$. 

We turn to the details. 
Conditional on $M,I_1,I_2$, we pick a pair of orthonormal vectors $u_1\perp u_2\in V_{(I_1,I_2)}^\perp$ uniformly at random, and independently of $(R_{I_1},R_{I_2})$. 
(On $\mS_{I_1}\wedge \mR_2^c$ we have $\dim(V_{(I_1,I_2)})=n-2$, so $u_1,u_2$ are an orthonormal basis for $V_{(I_1,I_2)}^\perp$ on this event.)
In terms of $u_1,u_2$ we may construct a vector which is also orthogonal to $R_{I_2}$ as follows:
$$z_1:= (R_{I_2}\cdot u_2)u_1-(R_{I_2}\cdot u_1)u_2 \; \in V_{I_1}^\perp.$$
Since $z_1$ lies in the orthocomplement of $V_{I_1}$, on $\mS_{I_1}$ we have
\begin{align}
0&= R_{I_1}\cdot z_1 \notag\\
&= (R_{I_1}\cdot u_1)(R_{I_2}\cdot u_2)-(R_{I_2}\cdot u_1)(R_{I_1}\cdot u_2)		\label{det}\\
&=:D_M(I_1,I_2).	\notag
\end{align}
Hence,
\begin{equation}	\label{detbound}
\pr\big(\mS_{I_1}\setminus \mR_2\big) \le \pr\big(D_M(I_1,I_2)=0,\; \corank(M)=1\big).
\end{equation}

Substituting $\tM$ for $M$, may express the $2\times 2$ determinant using \eqref{rwalk2}:
\begin{align}
D_{\tM}(I_1,I_2) &= \big[A(u_1)+W(u_1)\big]\big[A(u_2)-W(u_2)\big] - \big[A(u_2)+W(u_2)\big]\big[A(u_1)-W(u_1)\big]\notag\\
&= 2A(u_2)W(u_1)-2A(u_1)W(u_2)\notag\\
&= \sum_{j\in \Ex_M(I_1,I_2)}\xi(j) \big[ 2A(u_2)\partial_j^\pi(u_1)- 2A(u_2)\partial_j^{\pi}(u_2)\big]\notag\\
&= \sum_{j\in \Ex_M(I_1,I_2)}\xi(j)\partial_j^\pi(v )	\label{detwalk1}\\
&= W(v )		\label{detwalk2}
\end{align}
where we have defined
\begin{align}
 v  &:= 2A(u_2)u_1- 2A(u_2)u_2 \notag\\
&= \big[(R_{I_1}+R_{I_2})\cdot u_2\big] u_1- \big[(R_{I_1}+R_{I_2})\cdot u_1\big]u_2	\label{vdef}\\
&\in V_{(I_1,I_2)}^\perp.	\notag
\end{align} 

We would like to replace $M$ with $\tM$ and bound \eqref{detbound} using the random walk representation \eqref{detwalk2} with Theorem \ref{thm_erdos}. 
First we must reduce to an event on which many of the steps $\partial_j^\pi(v )$ are nonzero. 
We will do this in two stages.
First we must remove a bad event on which $v =0$; in light of \eqref{vdef} this is the event
$$\bad_0 := \set{R_{I_1}+R_{I_2}\in V_{(I_1,I_2)}}.$$
Once we have done this, we will be able to argue that $v $ is unstructured in a manner similar to the way we controlled the event $\bad'$ in Section \ref{sec_corank2}.

We begin with $\bad_0$. 
Since we are free to restrict to $\mR_2^c\wedge\good_{\eta}^{\sls}$, let us condition on $M$ such that these events hold. 
On $\bad_0\wedge\mR_2^c$, $M$ has exactly one nontrivial left null vector (up to dilation) which we denote by $y$; furthermore, on $\good^{\sls}(\eta)$ the level sets of $y$ are of size at most $\eta n$. 
Now $\bad_0$ is the event that $M$ has a left null vector $y'$ with $y'(I_1)=y'(I_2)\ne 0$, so we have
$$\bad_0\wedge \mR_2^c\subset \set{y(I_1)=y(I_2)}.$$
It follows that
\begin{align}
\pros[{I_1,I_2}]{\bad_0\wedge \mR_2^c\wedge\good^{\sls}(\eta)} &\le \pr_{{I_1,I_2}}\big(y(I_1)=y(I_2)\big) \notag\\
&\le \eta(1+o(1))		\label{badpmi}
\end{align}
which is small enough.

Similarly to what we did in Section \ref{sec_corank2}, for $\eps_1\in (0,1)$ we define
\begin{equation}	\label{HMI}
\mH'_{M,{I_1,I_2}}(\eps_1)= \set{x\in \R^n:\exists \lambda\in \R\mbox{ with } \min_{l=1,2}\big|x^{-1}(\lambda)\cap\mN_M(I_l)\big|>\eps_1 d}
\end{equation}
but we also set for $\eps_2\in (0,1)$
\begin{equation}		\label{HMI'}
\mH''_{M,{I_1,I_2}}(\eps_2)= \Big\{x\in \R^n:\exists \lambda\in \R\mbox{ with } \big|x^{-1}(\lambda)\cap\big[\Ex_M(I_1,I_2)\cup \Ex_M(I_2,I_1)\big]\big|>\eps_2 p(1-p)n\Big\}.
\end{equation}
For $\eps\in (0,1)$ we have the inclusion
\begin{align}		
\mH''_{M,{I_1,I_2}}(1+2\eps) &\subset \mH'_{M,{I_1,I_2}}(2\eps(1-\pe)) \notag\\
&\subset \mH'_{M,{I_1,I_2}}(\eps) \label{inclusions}
\end{align}
(by our assumption $\pe\le 1/2$). 
Since $\mH''_{M,{I_1,I_2}}$ is determined by $\Ex_M(I_1,I_2)\cup \Ex_M(I_2,I_1)=\Ex_{\tM}(I_1,I_2)\cup \Ex_{\tM}(I_2,I_1)$, we have 
\begin{equation}	\label{hinvar}
\mH''_{M,{I_1,I_2}}(\eps)=\mH_{\tM,{I_1,I_2}}''(\eps)
\end{equation}
for any $\eps\in (0,1)$, whereas this invariance does not hold for $\mH'_{M,{I_1,I_2}}(\eps)$. 

Let $\eps>0$ to be chosen later. 
On $\set{\corank(M)=1}$, let $x$ denote a fixed nontrivial null vector of $M$, so that $\ker(M)=\langle x\rangle.$
From \eqref{det}, on $\set{D_M(I_1,I_2)=0}$ we have $z_1,z_2\in \ker(M)$, where
$$z_2:= (R_{I_1}\cdot u_1)u_2-(R_{I_1}\cdot u_2)u_1 \; \in V_{I_2}^\perp$$
and (as before) 
$$z_1:= (R_{I_2}\cdot u_2)u_1-(R_{I_2}\cdot u_1)u_2 \; \in V_{I_1}^\perp.$$
It follows that on $\set{D_M(I_1,I_2)=0}$ we have $v=z_1-z_2\in \ker(M)$. On the intersection of this event with $\set{\corank(M)=1}$ and the event $\bad_0^c$ on which $v$ is non-zero, we have $0\ne v\in \langle x\rangle$.
Hence,
\begin{align}
\bad_0^c\wedge\big\{D_M(I_1,I_2)=0,&\; \corank(M)=1, v \in \mH'_{M,{I_1,I_2}}(\eps)\big\}\notag\\
&\subset \set{\corank(M)=1,\;x\in \mH'_{M,{I_1,I_2}}(\eps)}.
\end{align}
We may now argue exactly as in Section \ref{sec_corank2} to conclude
\begin{equation}		\label{xinH}
\pros[{I_1,I_2}]{\good_{\eta}^{\sls}\wedge\set{\corank(M)=1,\;x\in \mH'_{M,{I_1,I_2}}(\eps)}}\ll_\eps \eta.
\end{equation}

It only remains to bound
\begin{equation}	\label{remains}
\pr\big(D_M(I_1,I_2)=0, \; v \notin \mH'_{M,{I_1,I_2}}(\eps)\big).
\end{equation}
From \eqref{inclusions} this is bounded by
$$\pr\big(D_M(I_1,I_2)=0,\; v \notin \mH''_{M,{I_1,I_2}}(1+2\eps)\big).$$
Now we replace $M$ with $\tM$. 
We make the crucial observation that the second event is unchanged by this substitution. 
Indeed, $\mH''_{M,{I_1,I_2}}(1+2\eps)$ is unchanged as noted in \eqref{hinvar}.
Similarly, $v $ is the same or $M$ and $\tM$ since
$$v =\big[(R_{I_1}+R_{I_2})\cdot u_2\big] u_1 - \big[(R_{I_1}+R_{I_2})\cdot u_1\big] u_2$$
and $R_{I_1}+R_{I_2}=\tR_{I_1}+\tR_{I_2}$ as the shuffling preserves the sets $\mN_M(I_1)\cap \mN_M(I_2)$ and $\mN_M(I_1)\cup \mN_M(I_2)$. 
Hence, \eqref{remains} is bounded by
\begin{equation}
\pr\Big(D_{\tM}(I_1,I_2)=0,\; v  \notin \mH''_{M,{I_1,I_2}}(1+2\eps)\Big) = 
\pr\Big(W(v )=0,\;   v  \notin \mH''_{M,{I_1,I_2}}(1+2\eps) \Big).
\end{equation}
In the final step of the argument, we must show that 
the set $\Steps(v)$ (as defined in \eqref{stepsdef})
is usually large off the event 
$$\bad'':=\set{v \in \mH''_{M,{I_1,I_2}}(1+2\eps)}$$
with high probability in the randomness of $\pi$ (and taking $\eps$ sufficiently small). 
Conditioning on $M$ and ${I_1,I_2}$ such that $\bad''$ does not hold, 
with $\Flats(v )$ as in \eqref{flatsdef} we have
\begin{align*}
\e_\pi \big|\Flats(v )\big| &= \sum_{j\in \Ex_M(I_1,I_2)}\pr_\pi\big(\pi(j)\in v ^{-1}(v (j))\big) \\
&= \sum_{j\in \Ex_M(I_1,I_2)}\frac{\big|v ^{-1}(v (j))\cap \Ex_M(I_2,I_1)\big|}{\big|\Ex_M(I_2,I_1)\big|} \\
&=\frac{1}{\ex_M(I_1,I_2)}\sum_{\lambda:v ^{-1}(\lambda)\ne \phi} \big|v ^{-1}(\lambda)\cap \Ex_M(I_1,I_2)\big|  \big|v ^{-1}(\lambda)\cap \Ex_M(I_2,I_1)\big| \\
&\le \frac{1}{\ex_M(I_1,I_2)}\sum_{\lambda:v ^{-1}(\lambda)\ne \phi} \frac{1}{4}\big|v ^{-1}(\lambda)\cap\big( \Ex_M(I_1,I_2)\cup \Ex_M(I_2,I_1)\big)\big|^2 \\
&\le \frac{(1+2\eps)\pe(1-\pe)n}{4\ex_M(I_1,I_2)}
\sum_{\lambda:v ^{-1}(\lambda)\ne \phi}\big|v ^{-1}(\lambda)\cap\big( \Ex_M(I_1,I_2)\cup \Ex_M(I_2,I_1)\big)\big| \\
&\le \frac{1+2\eps}{2}p(1-p)n.
\end{align*}
We can then argue exactly as in \eqref{flatstail} that
\begin{equation}
\pros[\pi]{ \big|\Flats(v )\big| \ge \Big(\frac{1}{2}+2\eps\Big)d\Big(1-\frac{d}{n}\Big)} \le \exp\bigg( -c\frac{\eps^2}{1+\eps}d\bigg)
\end{equation}
(substituting $p=d/n$).
On the other hand, on $\good^{\ex}(\delta)$ we have
$$\big|\Ex_M(I_1,I_2)\big|\ge (1-\delta)d\Big(1-\frac{d}{n}\Big)$$
so that if we take $\eps$ and $\delta$ sufficiently small, 
\begin{equation}
\big| \Steps(v )\big| = \big|\Ex_M(I_1,I_2)\big|-\big|\Flats(v )\big| \gg d
\end{equation}
(again using our assumption $d\le n/2$).
Applying Lemma \ref{lem_rwalk}, 
\begin{align*}
\pro{\set{W(v )=0} \wedge \good^{\ex}(\delta)\setminus \bad''} &\ll e^{-cd} + d^{-1/2}
\end{align*}
which combines with \eqref{badpmi} and \eqref{xinH} to give 
$$\pr(\mR_1\wedge \mR_2^c\wedge\good^{\sls}(\eta))\ll \eta + d^{-1/2}$$
as desired. \qed

\section{Structured null vectors}
\label{sec_struct}

Our aim in the section is to prove Propositions \ref{prop_structpm} and \ref{prop_struct}. 
The proof of the former outlines the proof of the latter and serves as a warmup. 
For the proof of Proposition \ref{prop_struct} we will use Lemma \ref{lem_shuffling} to inject random walks as in the previous section.
We will also make heavier use of the discrepancy properties from Section \ref{sec_discrepancy}.
We remark that the proof of Proposition \ref{prop_structpm} is not needed for the proof of Proposition \ref{prop_struct}, so the reader who is only interested in the proof of the main theorem can begin at Section \ref{sec_structprelim}.

\subsection{Warmup: no sparse null vectors for $H$ }
\label{sec_structpm}

In this section we prove Proposition \ref{prop_structpm}.
We restrict the sample space to the event $\good(d)$ as defined in Theorem \ref{thm_gen}.
Recall that $\good(d)$ is the event that for some constants $c_1,c_2,C_2>0$ and a parameter $\kappa_3\in [1,\infty)$ (possibly depending on $n$), the following conditions on $\Sigma$ hold:
\begin{enumerate}[start=0]
\item (Minimum degree) For all $i\in [n]$, $|\mN_\Sigma(i)|,|\mN_{\Sigma^\tran}(i)|\ge d$.
\item (Expansion of small sets) For all $\gamma\in (0,c_1]$, for all $S\subset[n]$ such that $|S|\le \frac{\log n}{2\gamma}\frac{n}{d}$, we have $|\mN_\Sigma(S)|, |\mN_{\Sigma^\tran}(S)|\ge \frac{\gamma}{\log n}d|S|$.
\item (No large sparse minors) For all $A,B\subset [n]$ such that $|A|,|B|\ge C_2\frac{n}{d}\log n$, we have $e_\Sigma(A,B)\ge c_2\frac{d}{n}|A||B|$.
\item (No thin dense minors) For any $S,B\subset [n]$, $e_\Sigma(S,B), e_\Sigma(B,S)\le \kappa_3d|S|$. 
\end{enumerate}
We assume $d\ge C_0'\log^2n$ for some $C_0'>0$ to be taken sufficiently large depending on $c_1,c_2,C_2$. 
As in Section \ref{sec_pm}, 
for $0\le i\le 3$
we let $\good_i(d)$ denote the event that condition $i$ above holds for $\Sigma$, so that $\good(d)=\bigwedge_{i=0}^3\good_i(d).$
We continue to denote the rows of $\Sigma$ by $r_i$ and the rows of $\Xi$ by $Y_i$, so that the $i$th row of $H$ is $R_i=r_i\schur Y_i$.

Since the event $\good(d)$ is the same if we replace $\Sigma$ with $\Sigma^\tran$, it suffices to consider only right null vectors. 
For $k\in [n]$, let
$$\event_k = \set{\exists x\in \R^n: 0<|\spt(x)|\le k, \, H x=0}.$$
Our goal is to show that $\event_{(1-\eta)n}^c$ holds 
with probability $1-O(n^{-100})$.
We have
\begin{equation}
\pro{ \event_{(1-\eta)n}} = \sum_{k=2}^{\lf (1-\eta)n\rf} \pro{\event_k\setminus \event_{k-1}}
\end{equation}
(noting that $\event_1$ is empty).
Fix $1\le k\le (1-\eta)n$.
We can follow the same lines establishing \eqref{line2} in the proof of Proposition \ref{prop_structber} to bound
\begin{equation}		\label{simplerpm}
\pro{\event_k\setminus \event_{k-1}}\le \sum_{S\in {[n]\choose k-1}}\; \sum_{T\in {[n]\choose k}} \pro{\event_{S,T}}
\end{equation}
where
$$\event_{S,T} := \Big\{ \exists x\in \R^n: \, H x = 0, \, \spt(x) = T, \, \set{R_i}_{i\in S} \mbox{ are linearly independent} \Big\}.$$
(We have \eqref{simplerpm} instead of \eqref{line2} since the rows and columns of $H$ are not exchangeable.)

Now we fix arbitrary $S,T\subset [n]$ of respective sizes $k-1$, $k$.
Fix also an arbitrary 
$v\in \R^n$ with support $T$. 
Since conditioning on $(R_i)_{i\in S}$ fixes $x$ on $\event_{S,T}$, it suffices to bound
$$\pro{ H_{S^c\times [n]}v=0 \, \Big| \, \Sigma , (Y_i)_{i\in S}}$$
uniformly in $v$.

Our approach is different depending on whether $k$ is small or large. 
In both cases, we use the fact that the rows $R_i$ decouple after conditioning on $\Sigma $:
\begin{align*}
\pro{ H_{S^c\times [n]}v=0 \,\Big|\, \Sigma } &= \prod_{i\in S^c} \pro{ R_i\cdot v = 0 \,\Big|\, \Sigma }\\
&=  \prod_{i\in S^c} \pro{ Y_i\cdot (r_i\schur v ) = 0 \,\Big|\, \Sigma }.		\label{pmdecouple}
\end{align*}
Now under this conditioning, the random variables 
$Y_i\cdot (r_i\schur v )$
are random walks (in the sense of Theorem \ref{thm_erdos}).
For small $k$, it will be enough to show that there are many $i\in S^c$ such that 
\begin{equation}	\label{spt1}
\big|\spt(R_i)\cap T\big|  = \big|\spt(r_i\schur v)\big| \ge 1.
\end{equation}
For such $i$, the random walk takes at least 1 nonzero step since 
$v(j)\ne 0$ 
for all $j\in T$, so we have 
$$\pr\big( Y_i\cdot (r_i\schur v ) = 0\big| \Sigma \big)\le 1/2$$
in this case.
To lower bound the number of rows satisfying \eqref{spt1} we will use our restriction to the ``expansion of small sets" event $\good_1(d)$.

For larger $k$ we will need to argue that the random walks 
$Y_i\cdot (r_i\schur v ) \big|\Sigma$
take more steps. 
For this we prove a consequence of our restriction to $\good_2(d)$ (Lemma \ref{lem_Aeps} below), which essentially guarantees that for most $i\in S^c$, the intersection of any sufficiently large set $B$ with the neighborhood $\mN_\Sigma (i)= \spt(r_i)$ has roughly its expected size, which by our restriction to $\good_0(d)$ is at least $p|B|$ (where we continue to denote $p:=d/n$).
Applying this with $B=T$ gives $|\spt(r_i)\cap T|\gg pk$ for most $i\in S^c$. 
We will build on this idea in the proof of Proposition \ref{prop_struct} for the unsigned rrd matrix $M$, where we will also need that a large set $B$ ``sees" roughly the expected portion of the sets $\Ex_M(i_1,i_2)$. 

We turn to the details.
Let $\gamma\in (0,c_1]$ to be chosen later.
First assume $k\le \frac1{2\gamma} \frac{n\log n}{d}$. 
Let 
$A_0 = \set{i\in S^c: r_i1_T\ne 0}.$
By our restriction to $\good_1(d)$ we have
\begin{align}
\left|A_0\right| &= |\mN_{\Sigma ^\tran}\big(T\big) \setminus S| \notag\\
&\ge \gamma\frac{dk}{\log n} -k.
\end{align}
Since 
$v(j)\ne 0$
for all $j\in T$, we have that for $i\in A_0$, 
$$\pr\big(Y_i\cdot (r_i\schur v ) = 0 \big| \Sigma , (Y_i)_{i\in S}\big) \le 1/2$$
whence
\begin{align*}
\pro{ H_{S^c\times [n]} v=0 \,\Big|\, \Sigma , (Y_i)_{i\in S}} &\le  \prod_{i\in A_0} \pro{Y_i\cdot (r_i\schur v ) = 0 \,\Big|\, \Sigma , (Y_i)_{i\in S} } \le \left( \frac12\right)^{\gamma \frac{dk}{\log n} - k}.
\end{align*}
Since this bound is uniform in $v, \set{r_i\schur Y_i}_{i\in S}$, we conclude from \eqref{simplerpm} that
\begin{align}
\pro{\event_k\setminus \event_{k-1}} &\le {n\choose k}{n\choose k-1} \left( \frac12\right)^{\gamma \frac{dk}{\log n} - k}
\notag \\
&\le \expo{ Ck\log n - c\gamma \frac{dk}{\log n}} \notag \\
&\le n^{-100k}\label{smallk}
\end{align}
where we have taken $C_0'>C'/\gamma$ for a sufficiently large constant $C'$ (we will later fix $\gamma$ depending on $C_2$), so that $d\ge \frac{C'}{\gamma}\log^2n$. 
Summing the bounds \eqref{smallk} gives
\begin{equation}	\label{klow}
\pro{\event_k} \ll n^{-100}
\end{equation}
for any $k\le \frac{1}{2\gamma}\frac{n\log n}{d}$.

Now assume $k> \frac{1}{2\gamma}\frac{n\log n}{d}$. 
For this case we apply the following consequence of our restriction to $\good_2(d)$.
(Recall that $\good_2(d)$ is the event that condition 2 from Theorem \ref{thm_gen} holds. Below we also make use of the constants $C_2,c_2$ defined there.)
\begin{lemma}		
\label{lem_Aeps}
For $A,B\subset [n]$ let
$$A' = \set{ i\in A: \; |B(i)| \ge c_2p|B|}$$
where we use the shorthand $B(i):=B\cap \mN_\Sigma (i)$, 
and denote $p:=d/n$.
On $\good_2(d)$, we have
$$ |A\setminus A'| \ll p^{-1}\log n$$
if $|B|\ge \frac{\log n}{2\gamma \pe} $ with $\gamma$ sufficiently small depending on $C_2$. 
\end{lemma}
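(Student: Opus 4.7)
The plan is to prove this by direct contradiction using property (2) (``no large sparse minors'') of the event $\good_2(d)$. Write $A'' := A\setminus A'$, so that by definition every $i\in A''$ satisfies $|B(i)|<c_2 p|B|$. Summing this strict inequality over $i\in A''$ and recognizing the left-hand side as an edge count gives
\begin{equation*}
e_\Sigma(A'',B) \;=\; \sum_{i\in A''}|B\cap \mN_\Sigma(i)| \;<\; c_2\, p\, |A''|\,|B|.
\end{equation*}

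On the other hand, property (2) of $\good_2(d)$ asserts that whenever $|A''|,|B|\ge C_2\frac{n}{d}\log n$, we have the reverse inequality $e_\Sigma(A'',B)\ge c_2\frac{d}{n}|A''||B|=c_2 p|A''||B|$. These two bounds are incompatible, so we must have either $|A''|<C_2\frac{n}{d}\log n$ or $|B|<C_2\frac{n}{d}\log n$.

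The hypothesis $|B|\ge \frac{\log n}{2\gamma p}=\frac{n\log n}{2\gamma d}$ rules out the second alternative provided $\gamma\le \frac{1}{2C_2}$, which is the meaning of ``$\gamma$ sufficiently small depending on $C_2$''. Hence $|A\setminus A'|=|A''|<C_2\frac{n}{d}\log n\ll p^{-1}\log n$, as claimed.

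There is no genuine obstacle here; the content is a one-line application of condition (2) of $\good(d)$, and the only subtlety is matching the lower bound on $|B|$ required by $\good_2(d)$ (namely $C_2\frac{n}{d}\log n$) with the lower bound supplied by the lemma's hypothesis (namely $\frac{\log n}{2\gamma p}$), which is exactly the calibration that forces the restriction on $\gamma$.
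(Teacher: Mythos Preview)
Your proof is correct and follows essentially the same approach as the paper's: both compute $e_\Sigma(A\setminus A',B)$ directly, obtain a contradiction with property~(2) unless $\min(|A\setminus A'|,|B|)<C_2 p^{-1}\log n$, and then use the lower-bound hypothesis on $|B|$ (calibrated by $\gamma$) to force $|A\setminus A'|$ to be the small one. Your presentation is in fact slightly cleaner than the paper's, which phrases the final step as a case split on whether $|B|\le |S|$.
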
 

\begin{proof}
Define 
$$\mF= \set{(A,B):\; A, B\subset[n],\; \min(|A|,|B|) \ge C_2\frac{\log n}{p} }$$
so that on $\good_2(d)$ we have 
$e_\Sigma(A,B)\ge c_2p|A||B|$
for all $(A,B)\in \mF$.

Denote $S=A\setminus A'$.
We claim $(S,B)\notin \mF$. 
Indeed, if this were not the case we would have
\begin{align*}
c_2 p |S ||B| &\le e_\Sigma (S ,B) \\
& = \sum_{i\in S } |B(i)| \\
&< c_2 p|S ||B|
\end{align*}
a contradiction. 

Suppose $|B| \le |S |$. 
Since $(S ,B)\notin \mF$ we have
$$\frac{1}{2\gamma} \frac{n\log n}{d} \le |B| \le C_2 \frac{n\log n}{d}.$$
Taking $\gamma$ sufficiently small depending on $C_2$ we obtain a contradiction, and so $|S |\le |B|$, and by the definition of $\mF$ we must have $|S| \ll \frac{\log n}{p}.$
\end{proof}

Applying the lemma with $A=S^c$, $B=T$, 
we have that for all $i\in A'$, 
$|\spt(r_i)\cap T|\ge c_2pk$, and so by Theorem \ref{thm_erdos},
\begin{equation}
\pro{ Y_i\cdot (r_i\schur v ) = 0 \,\Big|\, \Sigma , (Y_i)_{i\in S}} \ll (pk )^{-1/2}.
\end{equation}
It follows that
\begin{align}
\pro{ H_{S^c\times [n]} v=0 \,\Big|\, \Sigma , (Y_i)_{i\in S}} &\le \prod_{i\in A'} \pro{Y_i\cdot (r_i\schur v ) = 0 \,\Big|\, \Sigma , (Y_i)_{i\in S}} \notag \\
& \le \left[ \frac{C}{\sqrt{pk}}\right]^{n-k - O\big(\frac{n\log n}{d}\big)}.	\label{mmk}
\end{align}
For $\frac{n\log n}{d}\ll k\le \frac{n}2$ this expression is bounded by 
$O\big(\expo{-cn\log\log n}\big)$,
which combines with \eqref{simplerpm} to give 
\begin{align}
\pro{\event_k\setminus \event_{k-1}} &\ll 4^n \expo{ -cn \log\log n}	\notag\\
&= O\left( \expo{-cn \log\log n}\right).		\label{medk}
\end{align}
For $\frac{n}2\le k\le (1-\eta)n$ we instead bound \eqref{mmk} by
\begin{equation*}
O\left(\expo{-\frac{1}2(n-k) \log(pk)} \right) = O\left(\expo{-\frac{1}2(n-k) \log d} \right) 
\end{equation*}
assuming $\eta\ge C\frac{\log n}{d}$ for $C>0$ sufficiently large. 
With \eqref{simplerpm} we conclude
\begin{align}
\pro{\event_k\setminus \event_{k-1}} &\ll {n\choose n-k}^2 \expo{ -\frac12(n-k)\log d} \notag\\
&\le \left(\frac{en}{n-k}\right)^{2(n-k)} d^{-(n-k)/2} \notag \\
&\le \left( \frac{e}{\eta d^{1/4}}\right)^{2\eta n} \notag \\
&\ll \expo{-c\eta n}		\label{bigk}
\end{align}
assuming $\eta$ is at least a sufficiently large multiple of $d^{-1/4}$. 

Summing the bounds \eqref{smallk}, \eqref{medk}, \eqref{bigk} over $1\le k\le (1-\eta)n$ completes the proof. \qed

\subsection{Preliminary reductions}	\label{sec_structprelim}
We now turn to the unsigned rrd matrix $M$ and the proof of Proposition \ref{prop_struct}.
Recall our notation for the level sets of a vector $x\in \R^n$:
$$x^{-1}(\lambda) := \set{i\in [n]: \, x(i)=\lambda}$$
for $\lambda\in \R$. 
Our aim is to show that the good event
\begin{align*}
\good^{\sls}(\eta) &:= \Big\{ \forall \lambda\in \R \mbox{ and }\forall \, 0\ne x\in \R^n: Mx=0\mbox{ or } M^\tran x=0, \, \mbox{ we have } |x^{-1}(\lambda)|\le \eta n \Big\}
\end{align*}
holds with probability $1-O(n^{-100})$ for any $\eta\in [C_1d^{-c_0},1]$, for some constants $C_1,c_0>0$.
Let
\begin{equation}
\good_R(\eta) = \Big\{\,\forall \lambda\in \R,\;  \forall\, 0\ne x\in \R^n \mbox{ such that $Mx=0$ }, \big|x^{-1}(\lambda)\big|\le \eta n \,\Big\}.
\end{equation}
Since $M\eqd M^\tran$, by a union bound it suffices to show that $\good_R(\eta)$ holds 
with probability $1-O(n^{-100})$.
The following claim recasts $\good_R(\eta)$ as the event that there is a sparse vector that is mapped by $M$ to a constant vector. 

\begin{claim}
\label{claim_goodbad}
For any $\eta\in (0,1]$, we have
\begin{equation}	\label{goodLbad}
\good_R(\eta)^c = \Big\{\, \exists\, 0\ne y\in \R^n: \, \big|\spt(y)\big|\le (1-\eta)n, \, My\in \set{0, \ones}\, \Big\}
\end{equation}
where we recall that $\ones\in \R^n$ is the vector with all components equal to $1$. 
\end{claim}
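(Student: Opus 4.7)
The plan is to prove the set equality by establishing both inclusions via explicit constructions that exploit the fact that $\ones$ is a (right) eigenvector of $M$ with eigenvalue $d$, so shifts by multiples of $\ones$ convert null vectors into preimages of $\ones$ and vice versa.

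First I would handle the forward inclusion $\good_R(\eta)^c \subset \{\exists\, y : |\spt(y)|\le (1-\eta)n,\ My\in\{0,\ones\}\}$. Suppose $x\ne 0$ satisfies $Mx=0$ and $|x^{-1}(\lambda)| > \eta n$ for some $\lambda\in\R$. If $\lambda=0$, simply set $y=x$; then $|\spt(y)|=n-|x^{-1}(0)|<(1-\eta)n$ and $My=0$. If $\lambda\ne 0$, set
\[
y := \frac{1}{-\lambda d}\bigl(x - \lambda \ones\bigr).
\]
Then $My = \frac{1}{-\lambda d}(Mx - \lambda d\ones) = \ones$, while $y^{-1}(0) = x^{-1}(\lambda)$, so $|\spt(y)| < (1-\eta)n$. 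Note $y\ne 0$: otherwise $x=\lambda\ones$, forcing $Mx = \lambda d\ones\ne 0$, a contradiction.

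For the reverse inclusion, suppose $0\ne y$ with $|\spt(y)|\le (1-\eta)n$ and $My\in\{0,\ones\}$. If $My=0$, take $x=y$ and $\lambda=0$, so $|x^{-1}(0)|\ge \eta n$. If $My=\ones$, set
\[
x := y - \tfrac{1}{d}\ones,
\]
so that $Mx = \ones - \ones = 0$, and $x^{-1}(-1/d) = y^{-1}(0)$ has cardinality at least $\eta n$. Again $x\ne 0$: if $x=0$ then $y=\tfrac{1}{d}\ones$, contradicting $|\spt(y)|\le (1-\eta)n$ for $\eta>0$. Taking $\lambda = -1/d$ gives the desired large level set.

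There is no real obstacle here beyond keeping track of the two cases ($\lambda=0$ versus $\lambda\ne 0$, respectively $My=0$ versus $My=\ones$) and verifying nontriviality of the constructed vectors; the key algebraic fact doing all the work is the identity $M\ones = d\ones$, which lets us translate between the subspace of null vectors and the affine set $M^{-1}(\ones)$ by adding or subtracting the constant vector $\tfrac{1}{d}\ones$.
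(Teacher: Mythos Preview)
Your proof is correct and follows essentially the same approach as the paper's: both directions hinge on the identity $M\ones = d\ones$, using the map $x \mapsto x - \lambda\ones$ (up to dilation) to pass between null vectors with a large $\lambda$-level set and sparse preimages of $\{0,\ones\}$. Your treatment is slightly more explicit (splitting off the case $\lambda=0$ and verifying $y\ne 0$), but otherwise matches the paper's argument.
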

(We actually only need the containment $\subset$ in \eqref{goodLbad}.)
\begin{proof}
Let us denote the right hand side of \eqref{goodLbad} by $\bad(\eta)$.
Suppose that $\good_R(\eta)$ fails. Then there exists a nontrivial null vector $x\in \R^n$ and $\lambda\in \R$ such that $|x^{-1}(\lambda)|>\eta n$.
Let $y=\lambda\ones-x$. Then $y$ is nontrivial and $|\spt(y)|<(1-\eta)n$. Moreover,
$$My = \lambda M\ones -Mx = \lambda d\ones \in \langle\ones\rangle$$
so by dilating $y$ we see that $\bad(\eta)$ holds. 

Conversely, suppose that $\bad(\eta)$ holds. Then there exists a nontrivial vector $y$ supported on at most $(1-\eta)n$ coordinates such that $My$ is either 0 or $\ones$.
If $My=0$ then we are in $\good_R(\eta)^c$ (simply taking $x=y$ and $\lambda=0$).
So assume $My=\ones$. 
Now letting $x=y-\frac{1}{d}\ones$, we have that $x$ is a right null vector of $M$ with $|x^{-1}(1/d)|>\eta n$, so we are in $\good_R(\eta)^c$.
\end{proof}

It remains to show that $\good_R(\eta)$ holds 
with probability $1-O(n^{-100})$. 
Letting
\begin{equation}		\label{eventkdef}
\event_k:= \big\{\, \exists y\in \R^n: \, |\spt(y)|=k, \,My\in \set{0,\ones} \big\}
\end{equation}
we have
\begin{equation}	\label{decompose}
\pr\big(\good_L(\eta)^c\big) =\sum_{k=2}^{\lf(1-\eta)n\rf} \pr\big(\event_k\setminus \event_{k-1}\big)
\end{equation}
(note that $\event_1$ is empty since no column can be parallel to $0$ or $\ones$).

The following lemma is analogous to the bound \eqref{simplerpm} from the proof of Proposition \ref{prop_structpm}.
The proof is lengthier but follows similar reasoning.

\begin{lemma}[Passing to a large minor]
\label{lem_tall}
For $k\in [n]$, let
\begin{equation}
\mW_k = \Big\{ \hat{v}\in \R^k : \; v(j) \ne 0 \; \forall j\in [k] \Big\}
\end{equation}
be the set of vectors in $\R^k$ with full support. 
Suppose that for some $Q_k\ge 0$ we have a bound
\begin{equation}	\label{qk}
\pro{ M_{[k+1,n]\times [k]}\hat{v} = \alpha \ones \,\Big|\, R_1,\dots,R_k} \le Q_k
\end{equation}
that is uniform in the choice of $\hat{v}\in \mW_k$, $\alpha \in \set{0,1}$ and the realization $R_1,\dots, R_k$ of the first $k$ rows of $M$. 
Then we have
\begin{equation}
\pr\big(\event_k\setminus \event_{k-1}\big)\ll {n\choose k}^2 Q_k.
\end{equation}
\end{lemma}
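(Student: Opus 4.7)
The plan is to prove \Cref{lem_tall} by two successive applications of the combinatorial factor $\binom{n}{k}$, exploiting that the distribution on $\mM_{n,d}$ is invariant under both row and column permutations: the first factor reduces the support of $y$ to $[k]$, and the second fixes a set of rows that pins down the candidate $\hat y$.

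First, by column exchangeability I union-bound over the support $S\in\binom{[n]}{k}$ of $y$, paying $\binom{n}{k}$ and reducing the task to bounding by $O\!\left(\binom{n}{k}\, Q_k\right)$ the probability of
\[
\event_{k,[k]}:=\Big\{\exists\, \hat y\in\mW_k,\;\alpha\in\{0,1\}:\; M_{[\cdot]\times[k]}\,\hat y=\alpha\ones\Big\}\cap\event_{k-1}^c.
\]
The key structural observation is that on $\event_{k-1}^c$, every nonzero $\hat z\in\ker(M_{[\cdot]\times[k]})$ has full support on $[k]$: otherwise its zero-padded embedding in $\R^n$ is a null vector of $M$ of support at most $k-1$. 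Consequently, given two linearly independent kernel vectors $\hat z_1,\hat z_2$, the combination $\hat z_1-t\hat z_2$ with $t=\hat z_1(1)/\hat z_2(1)$ vanishes at coordinate $1$ while remaining nonzero, contradicting the full-support property. Hence $\dim\ker(M_{[\cdot]\times[k]})\le 1$, i.e.\ $\rank(M_{[\cdot]\times[k]})\ge k-1$.

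Next I refine this rank according to $\alpha$. If $\alpha=1$: assuming for contradiction that $\rank=k-1$ with $\hat z$ spanning the (full-support) kernel, the perturbation $\hat y+s\hat z$ with $s=-\hat y(1)/\hat z(1)$ yields another solution $\hat y'$ of $M_{[\cdot]\times[k]}\hat y'=\ones$ with $\hat y'(1)=0$, again contradicting $\event_{k-1}^c$; thus $\rank=k$. Paying a further $\binom{n}{k}$ via row exchangeability, I assume $R_1,\dots,R_k$ are linearly independent, whereupon $\hat y=M_{[k]\times[k]}^{-1}\ones$ is a deterministic function of $R_1,\dots,R_k$, and hypothesis \eqref{qk} bounds the probability of $M_{[k+1,n]\times[k]}\hat y=\ones$ by $Q_k$. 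If $\alpha=0$: the nontrivial null vector $\hat y$ forces $\rank=k-1$ exactly; paying $\binom{n}{k-1}$ via row exchangeability, I assume $R_1,\dots,R_{k-1}$ are linearly independent, and the one-dimensional kernel of $M_{[k-1]\times[k]}$ is spanned (up to sign) by a unit vector $\hat v$, which by the full-support observation lies in $\mW_k$. Any admissible $\hat y$ being a scalar multiple of $\hat v$, the tower property and \eqref{qk} give
\[
\pr\big(M_{[k,n]\times[k]}\hat v=0\,\big|\,R_1,\dots,R_{k-1}\big)\le \e\!\left[\pr\big(M_{[k+1,n]\times[k]}\hat v=0\,\big|\,R_1,\dots,R_k\big)\,\Big|\,R_1,\dots,R_{k-1}\right]\le Q_k.
\]

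Summing the two cases and multiplying by the column factor yields $\pr(\event_k\setminus\event_{k-1})\ll \binom{n}{k}^2\,Q_k$, with the $\binom{n}{k}\binom{n}{k-1}$ contribution from $\alpha=0$ absorbed into $\binom{n}{k}^2$ up to the elementary factor $k/(n-k+1)$. The main obstacle I anticipate is the rank dichotomy in the middle paragraph: the row/column dependence in $M$ blocks a direct iid-style argument as in \Cref{prop_structber}, and we must exploit the minimality assumption $\event_{k-1}^c$ to extract the clean statement $\rank\in\{k-1,k\}$ and the full-support property of kernel vectors. Everything else, namely the row/column exchangeability and conditioning manipulations, is then routine.
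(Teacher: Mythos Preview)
Your proposal is correct and follows essentially the same approach as the paper's proof: column exchangeability for the first $\binom{n}{k}$, a rank dichotomy on $M_{[n]\times[k]}$ driven by the minimality hypothesis $\event_{k-1}^c$, row exchangeability for the second binomial factor, and then the conditioning that pins down $\hat v$ so that hypothesis \eqref{qk} applies. The only cosmetic difference is in the $\alpha=1$ case: the paper first intersects with the complement of $\event_{[k]}^0$ (so rank $k$ is immediate from the absence of any null vector supported on $[k]$), whereas you argue directly that a rank-$(k-1)$ configuration would allow a perturbation $\hat y+s\hat z$ with a zero coordinate, landing in $\event_{k-1}$; both routes are valid and yield the same bound.
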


\begin{proof}
By column exchangeability and a union bound, we have
\begin{equation}\label{shufflec}
\pro{\event_k\setminus \event_{k-1}}\le {n\choose k}\pro{\event_{[k]}\setminus\event_{k-1}}
\end{equation}
where $\event_{[k]}=\event_{[k]}^0\vee \event_{[k]}^1$, with 
$$
\event_{[k]}^0:=\big\{\exists x\in \R^n:\mbox{ $\spt(x)=[k]$, $Mx=0$}\big\}
$$
and
$$
\event_{[k]}^1:=\big\{\exists x\in \R^n:\mbox{ $\spt(x)=[k]$, $Mx=\ones$}\big\}.
$$
From
$
\event_{[k]}= \event_{[k]}^0\vee (\event_{[k]}^1\setminus \event_{[k]}^0)
$
we may bound
\begin{equation}\label{split}
\pro{\event_{[k]}\setminus \event_{k-1}} \le \pro{\event_{[k]}^0\setminus \event_{k-1}} + \pro{\event_{[k]}^1\setminus (\event_{[k]}^0\vee \event_{k-1})}.
\end{equation}

For the first term on the right hand side, note that on $\event_{[k]}^0\setminus \event_{k-1}$ the minor $M_{[n]\times [k]}$ has $k-1$ linearly independent rows. Indeed, if this were not the case we would have $\rank(M_{[n]\times [k]})\le k-2$, so that $M_{[n]\times [k]}$ has 2 linearly independent right null vectors $x_1,x_2\in \R^k$. But there is a $k-1$-sparse linear combination of $x_1,x_2$, putting us in $\event_{k-1}$.

For the second term in (\ref{split}), note that on the complement of $\event_{[k]}^0\vee \event_{k-1}$ the minor $M_{[n]\times[k]}$ has full rank, and hence has $k$ linearly independent rows.

Now we spend some symmetry to fix the linearly independent rows. Let $\mL_{i}$ denote the event that $R_1,\dots,R_{i}$ are linearly independent. By row exchangeability we have
\begin{equation}\label{entropy01}
\pro{\event_{[k]}^0\setminus \event_{k-1}} \le {n\choose k-1} \pro{ \left(\event_{[k]}^0\setminus \event_{k-1} \right)\wedge \mL_{k-1}}
\end{equation}
and
\begin{equation}\label{entropy11}
\pro{\event_{[k]}^1\setminus (\event_{[k]}^0\vee \event_{k-1})} \le {n\choose k} \pro{ \left(\event_{[k]}^1\setminus (\event_{[k]}^0\vee \event_{k-1}) \right)\wedge \mL_{k}}.
\end{equation}

In (\ref{entropy01}), $(\event_{[k]}^0\setminus \event_{k-1})\wedge \mL_{k-1}$ is the event that the first $k-1$ rows of $M$ are linearly independent, that there is a null vector $x$ of $M$ supported on $[k]$, and that there are no $k-1$-sparse null vectors of $M$. Now on this event there is actually only one possibility for $x$ up to dilation. Indeed, on $\mL_{k-1}$ the system
\begin{equation}\label{system1}
M_{[k-1]\times [k]}z = 0
\end{equation}
has a unique solution up to dilation, by the linear independence of the first $k-1$ rows. Let us pick a nontrivial solution $\hx\in \R^k$ of (\ref{system1}) arbitrarily, and set $x^*=(\hx\quad 0)^\tran \in \R^n$. On the complement of $\event_{k-1}$, each component of $\hx$ is nonzero. Hence, $(\event_{[k]}^0\setminus \event_{k-1})\wedge \mL_{k-1}$ is contained in the event
\begin{align*}
\event_k' &:= \mL_{k-1}\wedge \big\{\hx(j)\ne0 \mbox{ for all $j\in [k]$}\big\} \wedge \big\{Mx^*=0\big\} \\
&=\mL_{k-1}'\wedge \set{M_{[k,n]\times[k]}\hx=0 } 
\end{align*}
where we have let $ \mL_{k-1}'=\mL_{k-1} \wedge \set{ \hx(j)\ne0 \mbox{ for all $j\in [k]$} }$.
We emphasize that $\hx$ is a random vector in $\R^k$, defined only on the event $\mL_{k-1}$, and fixed by conditioning on the first $k-1$ rows of $M$ through (\ref{system1}). 

We may similarly fix the vector in the preimage of $\ones$ on the event $(\event_{[k]}^1\setminus (\event_{[k]}^0\vee \event_{k-1}))\wedge \mL_{k}$ from (\ref{entropy11}). 
This event is disjoint from the event $(\event_{[k]}^0\setminus \event_{k-1})\wedge \mL_{k-1}$ from (\ref{entropy01}), and on it we may define $\hy\in \R^k$ as the unique solution of
\begin{equation}\label{system2}
M_{[k]\times [k]}y=\ones.
\end{equation}
Setting
\begin{equation}
\event_k'' := \mL_k''\wedge \set{ M_{[k+1,n]\times [k]}\hy=\ones } 
\end{equation}
where
$$\mL_k'':=\mL_k \wedge \big\{\hy(j)\ne0 \mbox{ for all $j\in [k]$}\big\}$$
we similarly conclude that 
$$
(\event_{[k]}^1\setminus (\event_{[k]}^0\vee \event_{k-1}))\wedge \mL_{k}\subset \event_k''.
$$
Here also, $\hy\in \R^k$ is a random vector defined only on the event $\mL_k$ via (\ref{system2}), fixed by conditioning on the first $k$ rows of $M$.

Combined with (\ref{entropy01}), (\ref{entropy11}), (\ref{split}) and (\ref{shufflec}), we have
\begin{equation}\label{sparse_bound1}
\pr(\event_k\setminus \event_{k-1}) \le {n\choose k}{n\choose k-1} \pr(\event_k') +{n\choose k}{n\choose k}\pr(\event_k'').
\end{equation}
By conditioning on a realization of $R_1,\dots, R_k$ such that $\mL_k'$ holds, which fixes $\hat{x}$, we see that $\pr(\event_k')\le Q_k$, with $Q_k$ as in \eqref{qk}.
We similarly have that $\pr(\event_k'')\le Q_k$, and the result follows.
\end{proof}

The bound $Q_k$ will play the same role as bounds on $\pr(\event_{S,T})$ did in the proof 
of Proposition \ref{prop_structpm} in Section \ref{sec_structpm}.
As in that proof, our approach will be different depending on the size of $k$.
We want to control the event
\begin{equation}	\label{dcrows}
\big\{ \, M_{[k+1,n]\times[k]}\hat{v}=\alpha\ones \, \} = \bigwedge_{i=k+1}^n\big\{\, R_i\cdot v=\alpha \,\big\}
\end{equation}
where $v=(\hat{v}\quad 0)$ extends $\hat{v}$ to a vector in $\R^n$.
In Section \ref{sec_structpm} we did this by conditioning on $M$ and using the randomness of the signs.
We then viewed \eqref{dcrows} as the event that several independent random walks all landed at the same point, and used the expansion properties enjoyed by $\Sigma$ on the good events $\good_1(d),\good_2(d)$ to argue that a large number of the walks took a large number of steps. 

Here we will ``inject" random walks into the distribution of the dot products $R_i\cdot v$ by applying the shuffling couplings of Lemmas \ref{lem_shuffling} and \ref{lem_rshuffling}.
For small $k$, we will apply shufflings to pairs of \emph{columns}, which will be chosen so that the number of rows altered by the switchings is large.
Conditioning on $M$, in the randomness of the switchings we will have that the events on the right hand side of \eqref{dcrows} are independent, and have probability at most $1/2$ for the affected rows.
For large $k$ we will apply shufflings independently to several non-overlapping pairs of rows, and use Lemma \ref{lem_rwalk} to bound the probabilities of the events in \eqref{dcrows}.

By Corollaries \ref{cor_discrep} and \ref{cor_thin} we may restrict to $\good^{\ee}(\eps )$ and $\bad(\eps_0,\gamma)^c$ for some $\eps,\eps_0,\gamma>0$ to be chosen sufficiently small and independent of $n$ -- these events will play similar roles to the events $\good_2(d),\good_1(d)$, respectively, in the previous section.
By Theorem \ref{thm_codeg} we may also restrict to $\good^{\ex}(\delta)$ for some $\delta>0$ to be chosen small independent of $n$.
For now let $\eta\in (0,1]$ possibly depending on $d$.
We will put restrictions on the range of $\eta$ as the proof develops, ultimately taking 
$\eta\ge C_1d^{-c_0}$ for some constants $C_1,c_0>0$. 

\subsection{High sparsity}
\label{sec_smallk}

Fix $k\le \frac{1}{2\gamma} \frac{n\log n}{d}$. 
Towards an application of Lemma \ref{lem_tall}, we fix $\hat{v}\in \mW_k$ and $\alpha\in \set{0,1}$. 
Pair off the first $k$ columns of $M$ with the last $k$ columns according to some bijection
$$\sigma: [k]\rightarrow [n-k+1,n]$$
chosen in some arbitrary fashion, say uniformly at random and independently of $M$. 

The following lemma shows that we can locate a large number of pairs of columns $(j,\sigma(j))$ on which we can perform independent restricted shufflings (see Lemma \ref{lem_rshuffling}). 
We use restricted shufflings rather than Lemma \ref{lem_shuffling} in order to ``spread out" the switching modifications to $M$.
Specifically, we want to ensure that each row of $M$ is only affected by at most one random sign, in order to decouple the events in \eqref{dcrows}.

\begin{lemma}[Locating disjoint patches of row indices for shufflings]
\label{lem_patches}
Let $\eps_0\in (0,1)$, and assume $\eps_0,\gamma,\delta$ are sufficiently small.
Then on the event $\good^{\ex}(\delta)\wedge\bad(\eps_0,\gamma)^c$ (the former event was defined in Theorem \ref{thm_codeg} and the latter in Corollary \ref{cor_thin}),
for some $m\gg k/\log n$
there exists an increasing sequence of column indices 
\begin{equation}	\label{jseq}
1\le j_1<j_2<\cdots<j_m\le k
\end{equation}
and an increasing sequence of sets of row indices
\begin{equation}	\label{fixseq}
[k]=\Fix(1)\subset \Fix(2)\subset \cdots \subset \Fix(m)\subset  [n]
\end{equation}
such that the following properties hold:
\begin{enumerate}
\item (Patches are large) For each $\ell\in [m]$, letting
\begin{align*}
A_\ell^+ &:= \Ex_{M^\tran}(j_\ell, \sigma(j_\ell))\setminus \Fix(\ell-1)\\
A_\ell^- &:= \Ex_{M^\tran}(\sigma(j_\ell),j_\ell)\setminus \Fix(\ell-1)
\end{align*}
we have 
\begin{equation}
|A_\ell^+|,\, |A_\ell^-| \ge .01d.
\end{equation}

\item (Disjointness) The $m$ sets $\set{A_\ell^+\cup A_\ell^-}$ are pairwise disjoint.

\item (Conditioning) For each $\ell\in [m]$, $\Fix(\ell)$ is fixed by conditioning on the $2(\ell-1)$ columns $\bigcup_{\ell'<\ell} \set{j_{\ell'}, \sigma(j_{\ell'})}$.
\end{enumerate}

\end{lemma}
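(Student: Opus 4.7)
The plan is to construct $(j_l,\Fix(l))_{l\le m}$ greedily. Initialize $\Fix(1):=[k]$, and at step $l$ take $j_l$ to be any index in $[k]\setminus\{j_1,\dots,j_{l-1}\}$ for which both
$$A_l^+ := \Ex_{M^\tran}(j_l,\sigma(j_l))\setminus \Fix(l) \quad \text{and} \quad A_l^- := \Ex_{M^\tran}(\sigma(j_l),j_l)\setminus \Fix(l)$$
have at least $.01d$ elements; then set $\Fix(l+1):=\Fix(l)\cup A_l^+\cup A_l^-$. Property (3) holds inductively because $\Fix(l+1)\setminus \Fix(l)\subseteq\{i:M(i,j_l)\ne M(i,\sigma(j_l))\}$ is determined by the two columns $j_l,\sigma(j_l)$; property (2) is automatic since $A_l^+\cup A_l^-\subseteq \Fix(l+1)$ while $A_{l'}^\pm\cap \Fix(l')=\emptyset$ for all $l'>l$.

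All that remains is to show the procedure produces at least $m\ge c_0 k/\log n$ patches for an absolute constant $c_0>0$. I work on the intersection of $\bad(\eps_0,\gamma)^c$ with the overwhelmingly likely events $\good^{\ex}(\delta)$ and $\good^{\ee}(\eps)$ from \Cref{thm_codeg} and \Cref{cor_discrep}, with $\delta,\eps,\eps_0,\gamma$ fixed as small absolute constants at the end. Under $\good^{\ex}(\delta)$, $|\Ex_{M^\tran}(j,\sigma(j))|\ge(1-\delta)p(1-p)n\ge(1-\delta)d/2$ since $p\le 1/2$, and since $|\Ex_{M^\tran}(j,\sigma(j))\cap\Fix(l)|\le e_M(\Fix(l),\{j\})$, a failure of the $+$ side condition at column $j$ forces $e_M(\Fix(l),\{j\})>\tau d$ with $\tau:=(1-\delta)/2-.01>.48$. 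Writing $B_l:=\{j'\in[n]:e_M(\Fix(l),\{j'\})>\tau d\}$, the number of ``bad'' columns in $[k]$ is at most $|B_l\cap[k]|+|\sigma^{-1}(B_l\cap[n-k+1,n])|\le 2|B_l|$, so the greedy step succeeds at $l$ whenever $2|B_l|+(l-1)<k$.

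To bound $|B_l|$ I split on the size of $|\Fix(l)|\le k+2(l-1)d$. When $|\Fix(l)|\le s_0(\gamma)$, \Cref{cor_thin} gives $|B_l|\le(\eps_0/\tau)|\Fix(l)|$: assuming $|B_l|>b_0(\eps_0,\gamma,|\Fix(l)|)$ and passing to a subset of that exact size produces $\tau\gamma d/\log n\le 1$, contradicting $d=\omega(\log^2 n)$; hence $|B_l|\le b_0$ and the thin-minors inequality $\tau d|B_l|\le e_M(\Fix(l),B_l)\le \eps_0 d|\Fix(l)|$ follows. When $|\Fix(l)|>s_0(\gamma)$, \Cref{cor_discrep} forces $|B_l|\le C_\star n\log n/d$, since violating this would place $(\Fix(l),B_l)$ in $\mF(\eps)$ and yield $\tau\le(1+\eps)|\Fix(l)|/n$, which I will arrange to contradict the upper bound $|\Fix(l)|\le k+2md\le\frac{n\log n}{2\gamma d}+\frac{c_0 n}{\gamma}<.47n$ obtained from the choice of $m$.

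Setting $m:=c_0 k/\log n$, the inequality $2|B_l|+l<k$ reduces to a linear inequality in $k, m, |\Fix(l)|$ with coefficients depending on $\eps_0,\gamma,c_0$. Choosing these successively small (first $\eps_0$ to control $(\eps_0/\tau)|\Fix(l)|$, then $\gamma$ so that $C_\star n\log n/d\le\gamma k$ up to a constant throughout the large-$|\Fix(l)|$ regime, and finally $c_0$ small enough that $m\le k/4$ and $|\Fix(l)|<.47n$ hold) yields the required inequality for all $l\le m$. The main obstacle is dovetailing the two discrepancy estimates across the threshold $|\Fix(l)|=s_0(\gamma)$: the thin-minors bound weakens linearly as $|\Fix(l)|$ grows while the edge-discrepancy bound only becomes effective at scales $|\Fix(l)|\gtrsim n\log n/d$, and matching them requires tuning $\gamma$ and $\eps_0$ jointly against the range of $k$.
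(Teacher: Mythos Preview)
Your greedy construction and the verification of properties (1)--(3) match the paper's argument. The gap is in how you lower-bound $m$.

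You apply Corollary~\ref{cor_thin} with the thin set taken to be $\Fix(l)$, obtaining $|B_l|\le (\eps_0/\tau)|\Fix(l)|$. But $|\Fix(l)|$ grows: at step $l\le m=c_0k/\log n$ one has $|\Fix(l)|\le k+2ld\le k+2c_0kd/\log n$, and since $d=\omega(\log^2 n)$ the second term dominates, giving $|\Fix(l)|\asymp c_0kd/\log n$. Your bound then yields only
\[
|B_l|\;\lesssim\;\frac{\eps_0 c_0}{\tau}\cdot\frac{kd}{\log n},
\]
and the requirement $2|B_l|+l<k$ becomes $\eps_0 c_0 d/\log n\lesssim 1$, which fails for any fixed $\eps_0,c_0>0$ once $d/\log n\to\infty$. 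No tuning of constants rescues this, and the Case~2 appeal to $\good^{\ee}(\eps)$ does not help either: the bound $|B_l|\le C_\star n\log n/d$ need not be $\lesssim k$ throughout the range $k\le s_0(\gamma)$. The ``dovetailing'' you flag as the main obstacle is in fact insurmountable in this formulation.

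The fix is to apply Corollary~\ref{cor_thin} with the roles reversed: the thin set $S$ should be a set of at most $k$ \emph{column} indices, and $B=\Fix(m)$. The paper does this by running the greedy procedure until it halts naturally; at that point every $j\in[k]$ has either $j$ or $\sigma(j)$ with at least $.09d$ in-neighbours in $\Fix(m)$ (using $\good^{\ex}(\delta)$), so one builds a set $S$ of size exactly $k$ with $e_M(\Fix(m),S)\ge .09dk$. Since $|S|=k\le s_0(\gamma)$, restriction to $\bad(\eps_0,\gamma)^c$ with $\eps_0<.09$ forces $(S,\Fix(m))\notin\mF_{\thin}$, i.e.\ $|\Fix(m)|>\frac{\eps_0\gamma}{\log n}dk$. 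Combining with $|\Fix(m)|\le k+2md$ gives $m\gg k/\log n$ directly. This avoids tracking $B_l$ step-by-step, and in particular does not invoke $\good^{\ee}(\eps)$ at all.
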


We defer the proof of this lemma for now and use it to bound bound $\pr(\event_k\setminus \event_{k-1})$. 
As we have already restricted to $\good^{\ex}(\delta)\wedge\bad(\eps_0,\gamma)^c$, let $m\gg k/\log n$ and the sequences $(j_\ell)_{\ell=1}^m$, $(A_\ell^\pm)_{\ell=1}^m$ and $(\Fix(\ell))_{\ell=1}^m$ as in the lemma. 
We can form a coupling $(M,\tM)$ of rrd matrices using Lemma \ref{lem_rshuffling} by performing independent restricted shufflings on $M$ at the columns $(j_\ell, \sigma(j_\ell))$.
Specifically, letting $s:= \lceil .01 d\rceil$, 
for each $\ell\in [m]$ we draw $S_\ell^+\subset A_\ell^+$, $S_\ell^-\subset A_\ell^-$ of size $s$ independently and uniformly at random, and conditional on these $2m$ sets we draw $m$ independent uniform random bijections
$\pi_\ell: S_\ell^+\rightarrow S_\ell^-.$
We let $\xi:[n]\rightarrow\set{\pm1}$ be a sequence of iid uniform signs independent of all other random variables.
Then for each $\ell\in [m]$ and each $i\in S_\ell^+$, we replace the minor $M_{(i,\pi_\ell(i))\times (j_\ell, \sigma(j_\ell))}$ with the random $2\times 2$ matrix
$$\eye \un(\xi(i)=+1)+\jay \un(\xi(i)=-1).$$ 
By the independence of the signs $\xi(i)$ and the fact that the $2m$ sets $\set{ S_\ell^+, S_\ell^-}_{\ell\in [m]}$ are pairwise disjoint, we have
\begin{align*}
\pro{ \tM_{[k+1,n]\times [k]}\hat{v} = \alpha\ones \,\Big|\, M} & \le \prod_{\ell=1}^m \prod_{i\in S_\ell^+} \pro{ \sum_{j\in [k]}\tM(i,j) v(j) = \alpha \;\bigg|\; M} \\
&\le \left(\frac12\right)^{.01dm} \\
&\le \expo{ -c dk/\log n}.
\end{align*}
Since $M\eqd\tM$, by Lemma \ref{lem_tall}, we conclude
\begin{align*}
\pro{ \event_k\setminus \event_{k-1}} &\ll {n\choose k}^2 \expo{ -c\frac{dk}{\log n}} \\
&\le \expo{ 2k\left( \log n - c\frac{d}{\log n}\right) } \\
&\le \expo{ - c \frac{dk}{\log n}}\\
&\le n^{-100k}
\end{align*}
by our assumption 
$d\ge C_0\log^2n$, taking $C_0$ sufficiently large. 
Summing the bounds \eqref{smallk} gives
\begin{equation}
\pro{\event_k} \ll n^{-100}
\end{equation}
for any $k\le \frac{1}{2\gamma}\frac{n\log n}{d}$.\\

\begin{proof}{\emph{of Lemma \ref{lem_patches}}}.
Set $A_0=[k]$.
We build the sequences $(j_\ell)_{\ell=1}^m$ and $(\Fix(\ell))_{\ell=1}^m$ by a simple greedy procedure. 
For each $\ell\ge 1$, we inductively define $j_{\ell}$ 
to be the smallest $j\in [k]$ such that both of the sets 
$$\Ex_{M^\tran}(j, \sigma(j))\setminus \Fix(\ell-1) ,\quad \Ex_{M^\tran}( \sigma(j), j)\setminus \Fix(\ell-1) $$
are of size at least $0.1d$.
Then with $A_\ell^+, A_\ell^-$ as in the statement of the lemma, we set 
$$\Fix(\ell) = \Fix(\ell-1)\cup A_{\ell}^+\cup A_{\ell}^-.$$
If no such $j_{\ell}$ exists, we set $m:=\ell-1$ and STOP. 

The resulting sequences $(j_\ell)_{\ell=1}^m$ and $(\Fix(\ell))_{\ell=1}^m$ clearly satisfy the three properties in the statement of the lemma. 
It only remains to show that the halting time $m$ of the greedy procedure is of size $\Omega(k/\log n)$ if we take $\eps_0,\gamma$ sufficiently small.

We abbreviate 
$$\Ex^+(j):= \Ex_{M^\tran}(j, \sigma(j)), \quad \Ex^-(j):= \Ex_{M^\tran}(\sigma(j), j).$$
We have that for all $j\in [k]$, either $|\Ex^+(j)\setminus \Fix(m)|$ or $|\Ex^-(j) \setminus \Fix(m)|$ is  $< .01d$. 
For each $j\in [k]$, put $j\in S$ if $|\Ex^+(j)\setminus \Fix(m)|<.01d$ and otherwise put $\sigma(j)\in S$, so that 
$|S|=k$. 

Taking $\delta $ sufficiently small, by our restriction to $\good^{\ex}(\delta )$ we may assume 
$$|\Ex^+(j)|, |\Ex^-(j)| \ge .1d$$
for all $j\in [k]$. 
It follows that at least one of $\Ex^+(j) \cap \Fix(m)$ , $\Ex^-(j) \cap \Fix(m)$ is of size at least $.09 d$.
Since $\Ex^+(j)\subset \mN_{M^\tran}(j)$, $\Ex^-(j)\subset \mN_{M^\tran}(\sigma(j))$, we have
$$\left|\mN_M(j)\cap \Fix(m) \right| \ge .09d$$
for all $j\in S$.
Now 
$$e_M(\Fix(m), S) = \sum_{j\in S} \left|\mN_M(j)\cap \Fix(m) \right|  \ge .09d|S|$$
so taking $\eps_0<.09$ and $\gamma$ sufficiently small, by our restriction to $\bad(\eps_0,\gamma)^c$ we must have
$$|\Fix(m)| \ge \frac{\eps_0 \gamma}{\log n}dk.$$
But by the inductive procedure to produce $\Fix(m)$ we have
$|\Fix(m)| \le k+ 2md$,
from which it follows that
$$m\gg \frac{k}{d}\left( \frac{d}{\log n} -1 \right) \gg \frac{k}{\log n}$$
by our assumption $d\ge C_0\log^2n$
(here we only need $d\ge 2\log n$, say).
\end{proof}

\subsection{Moderate sparsity}

Now we fix $k$ in the range $\big[ \frac{1}{2\gamma}\frac{n\log n}{d},  (1-\eta)n\big]$. 

The proof mirrors the proof for large $k$ for the Hadamard product $H=\Sigma\schur \Xi$ in Section \ref{sec_structpm}.
The general idea is to express the event that $M_{[k+1,n]\times [k]}\hat{v}=\alpha \ones$ as the event that several independent random walks all land at $\alpha$.
Without the iid signs enjoyed by $H$ we must use the shuffling coupling of Lemma \ref{lem_shuffling} to create random walks. 
We use the discrepancy property enforced by our restriction to the event $\good^{\ee}(\eps )$ (from Corollary \ref{cor_discrep}) to argue that these walks take many steps (in particular we will need an extension of Lemma \ref{lem_Aeps} used in Section \ref{sec_structpm}),
at which point we can apply the anti-concentration bound from Theorem \ref{thm_erdos} to each walk.

More precisely, we will fix disjoint sets of row indices $A_1,A_2\subset A:=[k+1,n]$ of equal size $a_1=|A_1|=|A_2|\gg n-k$, and pair off the elements of $A_1$ with those of $A_2$ according to a bijection $\sigma:A_1\rightarrow A_2$.
For each $i\in A_1$, we perform a shuffling on $M$ at the row pair $(i,\sigma(i))$; we do this independently for each $i\in A_1$ and denote the new matrix by $\tM$. 
We have
\begin{align*}
\pro{M_{[k+1,n]\times [k]} \hat{v}=0 \,\Big|\, R_1,\dots, R_k} = \e_{R_{k+1},\dots, R_{n}} \pro{ \tM_{[k+1,n]\times [k]} \hat{v} = \alpha \ones \Big| \, M}
\end{align*}
so it suffices to bound 
\begin{equation}		\label{splitwalks}
\pro{ \tM_{[k+1,n]\times [k]} \hat{v}= \alpha \ones \Big| \, M} \le \prod_{i\in A_1} \pro{ \tR_i\cdot v = \alpha \big| M}.
\end{equation}

As in Section \ref{sec_unstruct}, in order to bound the probabilities in \eqref{splitwalks} using Theorem \ref{thm_erdos}, we will need to argue that many of these random walks take many steps.
For this we take the pairing $\sigma$ to be \emph{random} -- it is then possible to show using our restriction to the edge discrepancy event $\good^{\ee}(\eps )$ that with overwhelming probability most of the pairs $(i,\sigma(i))$ give walks that take a large number of steps.

We turn to the details. 
Fix disjoint sets $A_1,A_2\subset A:= [k+1,n]$ with 
$a_1:= |A_1|= |A_2| \gg n-k.$
We create a new rrd matrix $\tM$ coupled to $M$ from three additional sources of randomness:
\begin{enumerate}
\item a uniform random bijection $\sigma:A_1\rightarrow A_2$ independent of all other variables;
\item a sequence $(\pi_i)_{i\in A_1}$ of uniform random bijections 
$$\pi_i: \Ex_M(i,\sigma(i))\rightarrow \Ex_M(\sigma(i),i)$$
which are jointly independent conditional on $M$ and $\sigma$;
\item an array $\Xi:[n]^2\rightarrow \set{\pm1}$ of iid uniform random signs independent of all other variables. 
\end{enumerate}
Let $\xi_i=\Xi(i,\cdot)$ denote the $i$th row of the array of signs. 
We form $\tM$ by performing a shuffling on $M$ at $(i,\sigma(i))$ according to $\pi_i$ and $\xi_i$ for each $i\in A_1$. 
We have $\tM\eqd M$ by Lemma \ref{lem_shuffling} and independence.

Recall the notation $\Steps$ from \eqref{stepsdef}, and 
for fixed $i\in A_1$ denote
\begin{align*}
\Steps_i(\hat{v}) &:= \Steps_{M, \pi_i}^{(i,\sigma(i))}(v) \\
&= \set{ j\in \Ex_M(i,\sigma(i)): v(j) \ne v(\sigma(j))}
\end{align*}
where we recall $v=(\hat{v}\quad 0)\in \R^n$ 
with $\hat{v}\in \R^k$.
Now since $\spt(v)=[k]$, we have that for each $i\in A_1$, 
\begin{equation}	\label{stepscross}
\left|\Steps_i(\hat{v}) \right| \ge \left|\Cross_i(k) \right|
\end{equation}
where we define 
\begin{align}
 \Cross_i(k) &= \Cross_{M,\pi_i}^{(i,\sigma(i))}(k) \notag \\
 &:= \set{j\in \Ex_M(i,\sigma(i)) \cap [k]: \pi_i(j) \in [k+1,n]}	\label{crossdef}
\end{align}
the number of pairs $(j,\pi_i(j))$ which are in $[k]\times [k+1,n]$, i.e.\ pairs which cross the partition $[n]=[k]\cup [k+1,n]$ going from left to right. 
(We could also include pairs crossing right to left, but this will tend to improve the lower bound \eqref{stepscross} by only a constant factor.)

Hence, for $m\ge 1$, defining 
the good events
\begin{equation}	\label{goodi}
\good_i (m) := \set{ \big| \Cross_i(k) \big| \ge m}
\end{equation}
for each $i\in A_1$, by Lemma \ref{lem_rwalk} we have
\begin{equation}	\label{rowwalki}
\pr_{\xi_i}\Big\{ \tR_i\cdot v = \alpha \Big\}1_{\good_i(m)^c} =O(m^{-1/2}).
\end{equation}

In the remainder of the proof, we show that with overwhelming probability in the randomness of the bijections $\sigma$ and $(\pi_i)_{i\in A_1}$, for most $i\in A_1$ and for a reasonably large value of $m$, $\good_i(m)$ holds except on an exponentially small event. 
(Hence we are done with the iid signs $\Xi$.)
The randomness of $M$ will only enter through our restriction to the events $\good^{\ee}(\eps )$ and $\good^{\ex}(\delta)$. 

Lemma \ref{lem_ASeps} below summarizes what we need from the discrepancy property enforced on $\good^{\ee}(\eps )$ -- it is an extension of Lemma \ref{lem_Aeps} from the proof for $H$.
While for $H$ it was enough to know that the intersections $B(i)$ of a large set $B$ with the neighborhoods $\mN_M(i)$ were of size roughly $p|B|$, here we will need intersections of $B$ with the sets $\Ex_M(i_1,i_2)$, $\Ex_M(i_2,i_1)$ to be at least a constant factor of their expected size.

For $\eps\in (0,1)$ and a set of column indices $B\subset [n]$, say that an ordered pair $(i_1,i_2)$ of distinct row indices in $A$ is \emph{$\eps$-bad for $B$} if either 
\begin{equation}
|\Ex_M(i_1,i_2)\cap B| \le \eps p|B| \quad \mbox{ or } \quad |\Ex_M(i_2,i_1)\cap B^c| \le \eps p (n-|B|).
\end{equation}
The following lemma shows that on $\good^{\ee}(\eps)$ with $\eps$ sufficiently small, 
only a small number of pairs of elements of $A=[k+1,n]$ are $\eps$-bad for $[k]$.

\begin{lemma}			\label{lem_ASeps}
Let $B\subset [n]$, and continue to denote $A=[k+1,n]$.
For $i\in [n]$, denote $B(i):= \mN_M(i)\cap B$. 
For $\eps\in (0,1)$, define
\begin{equation}
A_{\eps} = \set{ i\in A: \; \left| \frac{|B(i)|}{p |B|} - 1\right| \le \eps, \; \left| \frac{ |B^c (i)|}{p (n-|B|)}-1\right| \le \eps},
\end{equation}
and for $i\in A$, let
\begin{equation}	\label{defseps}
S_{\eps}(i) = \set{ i'\in A_{\eps}: \, (i,i') \mbox{ is $\eps$-bad for $B$}}.
\end{equation}
On the event $\good^{\ee}(\eps)$ from Corollary \ref{cor_discrep} we have
\begin{equation}	\label{Abound}
|A\setminus A_{\eps}|\ll_\eps p^{-1}\log n
\end{equation}
and for every $i\in A_\eps$,
\begin{equation}		\label{Sbound}
|S_{\eps}(i)| \ll_\eps p^{-1}\log n
\end{equation}
assuming $|B|,|B^c|\ge \frac{1}{2\gamma}\pe^{-1}\log n$ for $\gamma$ sufficiently small depending on $\eps$.
\end{lemma}

\begin{proof}
We begin with \eqref{Abound}.

Define the sets
\begin{align*}
S_1 &= \set{i\in A: |B(i)| < (1-\eps) p|B|} \\
S_2 &= \set{i\in A: |B(i)| > (1+\eps) p|B|} \\
S_3 &= \set{i\in A: |B^c(i)| < (1-\eps) p(n-|B|)} \\
S_4 &= \set{i\in A: |B^c(i)| > (1+\eps) p(n-|B|)} 
\end{align*}
so that $A\setminus A_\eps = \bigcup_{k=1}^4 S_k.$
By the same lines as the proof of Lemma \ref{lem_Aeps} we have
$|S_1| \ll_\eps p^{-1}\log n.$
By replacing $B$ with $B^c$ we obtain the same bound on $|S_3|$. 
$|S_2|$ and $|S_4|$ are bounded similarly. 

We turn to the estimate \eqref{Sbound}.
Fix $i\in A_\eps$. 
We can write $S_\eps(i) = S_\eps^1(i)\cup S_\eps^2(i)$
where 
\begin{align*}
S_\eps^1(i) &= \big\{ i'\in A_\eps:\, |\Ex(i,i')\cap B| \le \eps p|B| \big\} \\
S_\eps^2(i) &= \big\{ i'\in A_\eps:\, |\Ex(i',i)\cap B^c| \le \eps p(n-|B|) \big\}.
\end{align*}
We first bound $|S_\eps^1(i)|$. 
For $i'\in S_\eps^1(i)$, we have
\begin{equation}	\label{bbi}
|\Ex(i,i')\cap B|\le \eps p|B| \le \frac{\eps}{1-\eps}|B(i)|
\end{equation}
since $i\in A_\eps$.
It follows that
\begin{align}
e_M\big(S_\eps^1(i), B(i)\big) &= \sum_{i'\in S_\eps^1(i)}\big|\Co(i,i')\cap B\big| \notag\\
&= \sum_{i'\in S_\eps^1(i)}|B(i)| - |\Ex(i, i')\cap B| \notag\\
&\ge |S_\eps^1(i)| \left(1-\frac{\eps}{1-\eps}\right)|B(i)| \notag\\
&= \frac{1-2\eps}{1-\eps}|S_\eps^1(i)||B(i)|.	\label{undere}
\end{align}
Now we show this contradicts our restriction to the event $\good^{\ee}(\eps)$ if $\eps$ is sufficiently small. 
Recall the family $\mF(\eps)$ of pairs of subsets of $[n]$ defined in Corollary \ref{cor_discrep}.
If $\big(S_\eps^1(i), B(i)\big)\in \mF(\eps)$ we have
\begin{equation}	\label{fromthis}
e_M(S_\eps^1(i), B(i)) \le (1+\eps) p |S_\eps^1(i)||B(i)|.
\end{equation}
From \eqref{fromthis} it follows that 
$$e_M(S_\eps^1(i), B(i)) \le \frac{(1+\eps)}{2} |S_\eps^1(i)||B(i)|$$
which contradicts \eqref{undere} if $\eps$ is a sufficiently small absolute constant. 
We may hence assume $(S_\eps^1(i), B(i))\notin \mF(\eps)$.
Similarly to how we argued in the bound for $|S_1|$, we can deduce from the lower bound 
$$|B(i)|\gg p|B| \gg \gamma^{-1}\log n$$
(since $i\in A_\eps$) that taking $\gamma$ sufficiently small, we must have $|S_\eps^1(i)|\le |B(i)|$ (for $n$ sufficiently large), and hence 
$$|S_\eps^1(i)|  \ll_\eps p^{-1}\log n.$$
The proof that $|S_\eps^2(i)|=O_\eps(p^{-1}\log n)$ follows similar lines and is omitted. 
\end{proof}

We define the subset of $A_1$ of ``good" row indices to be
\begin{equation}
A_1' = \Big\{ i\in A_1\cap A_\eps: \sigma(i) \in A_\eps \setminus S_\eps(i)\Big\}
\end{equation}
where $S_\eps(i)$ is as in \eqref{defseps} with $B=[k]$.
That is, $A_1'$ is the set of $i\in A_1$ such that $i$ and $\sigma(i)$ are both in $A_\eps$, and such that the pair $(i,\sigma(i))$ is not bad for $[k]$. 
Note that this is a random set depending on $M$ and $\sigma$. 
We can now use Lemma \ref{lem_ASeps} and the randomness of $\sigma$ to show that with overwhelming probability, $A_1'$ constitutes most of $A_1$. 

Let 
\begin{equation}
\bad' = \Big\{\, |A_1\setminus A_1'| \ge |A_1|/2 \, \Big\}.
\end{equation}
Now for arbitrary $m\ge 1$ we have
\begin{align}
\pro{ \tM_{[k+1,n]\times [k]} \hat{v}= \alpha \ones \,\Big|\, M} &\le \pros[\sigma]{ \bad'} +  \e_\sigma 1_{\bad'^c} \pro{ \tM_{[k+1,n]\times [k]}\hat{v}= \alpha \ones \,\Big|\, M, \sigma} 	\notag\\
&\le \pros[\sigma]{ \bad'} + \e_\sigma 1_{\bad'^c} \prod_{i\in A_1'} \pro{ \tR_i\cdot v=\alpha \big| M, \sigma}		\notag\\
&\le \pros[\sigma]{ \bad'} + \e_\sigma 1_{\bad'^c} \prod_{i\in A_1'} \left[ \pros[\pi_i]{\good_i(m)^c} + \pr_{\xi_i}\big\{\tR_i\cdot v=\alpha\big\} 1_{\good_i(m)} \right].		\label{summarize}
\end{align}
The term $\pr_{\xi_i}\big\{\tR_i\cdot v=\alpha\big\} 1_{\good_i(m)}$ is $O(m^{-1/2})$ by \eqref{rowwalki}. 
It remains to bound $ \pros[\sigma]{ \bad'} $
and $\pros[\pi_i]{\good_i(m)^c}$ (for some large value of $m$).

From Lemma \ref{lem_ASeps} 
with $B=[k]$ 
we have
\begin{equation}
|A\setminus A_\eps|\,,\; \max_{i\in A_1\cap A_\eps}|S_\eps(i)| \le s_0
\end{equation}
for some $s_0=O(p^{-1}\log n)$ (assuming $\eta\ge \frac{1}{2\gamma}\frac{\log n}{d}$).
By crudely estimating the number of bad realizations of $\sigma$, we can bound
$$
\pr_\sigma\big(\bad'\big) \le {a_1\choose \lf a_1/2\rf}\frac{s_0^{ a_1/2}\lf a_1/2\rf!}{a_1!} 
$$
(first fixing the $\lf a_1/2\rf$ elements of $A_1\setminus A_1'$, then choosing from the at most $s_0$ options for $\sigma(i)$ for each $i\in A_1\setminus A_1'$).
Simplifying this expression and applying the inequality $n!\ge  (n/e)^n$, true for all $n\in \N$,
\begin{equation} \label{badsigbd}
\pr_\sigma\big(\bad'\big) \le  \frac{s_0^{a_1/2}}{\lf a_1/2\rf!} \le \left(\frac{Cs_0}{a_1}\right)^{a_1/2}.
\end{equation}

Now we estimate the terms $\pros[\pi_i]{\good_i(m)^c}$ (see \eqref{goodi} for the definition of these events).
For fixed $i\in A_1'$ we have
\begin{equation}	\label{ecross}
\e_{\pi_i} |\Cross_i(k)| = \frac{ \big|\Ex_M(i,\sigma(i))\cap [k]\big| \big|\Ex_M(\sigma(i),i)\cap[k+1,n]\big|}{\big|\Ex_M(i,\sigma(i))\big|}.
\end{equation}
From our restriction to $\good^{\ex}(\delta )$ we know the denominator is of size $\Theta_{\delta }(d)$, 
and since $i\in A_1'$ the numerator is of size $\Omega\left( p^2 k(n-k)\right)$
whence,
\begin{equation}
\e_{\pi_i} |\Cross_i(k)| \gg \frac{d}{n}\frac{k(n-k)}{n} \gg p \min(k,n-k).
\end{equation}
From Lemma \ref{lem_com} it follows that 
\begin{equation}
|\Cross_i(k)|\gg p \min(k,n-k)
\end{equation}
except with probability at most $\expo{ -c p\min(k,n-k)}$ in the randomness of $\pi_i$. 
We have hence shown that for $i\in A_1'$, 
\begin{equation}		\label{badpibd}
\pros[\pi_i]{\good_i(m )^c} \le \exp\big(-cp\min(k,n-k)\big)
\end{equation}
where set $m:=c p \min(k,n-k)$,
and $c>0$ is a sufficiently small absolute constant. 
In particular, this bound is of lower order than the bound $\pr_{\xi_i}\big\{\tR_i\cdot v=\alpha\big\} 1_{\good_i(m)} = O(m^{-1/2})$. 

Substituting our bounds \eqref{rowwalki}, \eqref{badsigbd} and \eqref{badpibd} into \eqref{summarize}, we have
\begin{align*}
\pro{\tM_{[k+1,n]\times [k]}\hat{v} = \alpha \ones \,\Big|\, M} &\le \pr_\sigma\big(\bad'\big) + \e_\sigma 1_{\bad'^c} O(m^{-1/2})^{|A_1'|}\\
&\le  \left( \frac{Cn\log n}{ d(n-k)}\right)^{\kappa a_1} + m^{-(1-\kappa+o(1))a_1/2}.
\end{align*}
Applying Lemma \ref{lem_tall} we have
\begin{equation}	\label{splitk}
\pro{\event_k\setminus \event_{k-1}} \ll \mbox{(I)}_k + \mbox{(II)}_k
\end{equation}
where
\begin{align*}
\mbox{(I)}_k &=  {n\choose k}^2  \left( \frac{Cn\log n}{d(n-k)}\right)^{\kappa a_1} 	\\
\mbox{(II)}_k &= {n\choose k}^2 m^{-(1-\kappa+o(1))a_1/2}.
\end{align*}
First assume $\frac{n\log n}{d}\ll k\le \frac{n}{2}.$
In this case we have
\begin{equation}	\label{mlb1}
m=cp k \gg \log n.
\end{equation}
and
\begin{equation}	\label{a1lb}
a_1\gg n-k \ge n/2
\end{equation}
so
\begin{align*}
\mbox{(I)}_k &\le 4^n \left( \frac{Cn\log n}{ d(n-k)}\right)^{ a_1/2}  \\
&\le 4^n \left( \frac{C\log n}{ d}\right)^{cn} \\
&\le e^{-c n}
\end{align*}
if $d\ge C'\log n$ for some $C'$ sufficiently large. 
For the second term,
\begin{align*}
\mbox{(II)}_k &\le 4^n m^{-(1/2+o(1))a_1/2} = (\log n)^{-\Omega(n)}
\end{align*}
by the lower bounds \eqref{mlb1}, \eqref{a1lb}. 
From these bounds and \eqref{splitk} we conclude
\begin{equation}	\label{kmid}
\pro{\event_k\setminus \event_{k-1}} \le e^{-cn}
\end{equation}
for $\frac{n\log n}{d}\ll k\le n/2.$

Now assume $n/2 \le k \le (1-\eta)n$. 
In this case we have
\begin{equation}
m=cp(n-k) = c d\frac{n-k}{n} \gg \eta d.
\end{equation}
Since $a_1=|A_1|=|A_2|$, $A_1,A_2\subset A$ are arbitrary disjoint subsets, and $|A|=n-k$, we may take $a_1=(\frac12 -o(1))(n-k)$.
We then have
\begin{align*}
\mbox{(I)}_k &\le \left( \frac{en}{n-k}\right)^{2(n-k)} \left( \frac{Cn\log n}{ d(n-k)}\right)^{a_1/2}  \\
&\le \left[ \frac{C\log n}{d}\left( \frac{n}{n-k}\right)^{9}\right]^{(\frac14-o(1))(n-k)} \\
&\le \left( \frac{C\log n}{d\eta^{9}}\right)^{c(n-k)}.
\end{align*}
By our assumption $d\ge C_0\log^2n$ we conclude
\begin{equation}	\label{final1}
\mbox{(I)}_k \le \left( \frac{C}{d^{1/2}\eta^9}\right)^{c(n-k)} \le \left( \frac{C}{d\eta^{18}}\right)^{c'(n-k)}.
\end{equation}
For the other term:
\begin{align}
\mbox{(II)}_k &\le \left( \frac{en}{n-k}\right)^{2(n-k)}  m^{-(1/2+o(1))a_1/2} \notag \\
&\le \left( \frac{ C}{d\eta^{17+o(1)}}\right)^{c(n-k)}. \label{final2}
\end{align}
Combining the bounds \eqref{final1} and \eqref{final2}, we have that for $k\in [\frac{n}2,(1-\eta)n]$, 
\begin{equation}	\label{khigh}
\pro{\event_k\setminus \event_{k-1}} \le e^{-c\eta n}
\end{equation}
if we assume 
$\eta\ge  C_1d^{-1/18}$ for a sufficiently large constant $C_1>0$. 

Summing the bounds \eqref{klow}, \eqref{kmid}, \eqref{khigh} over their respective ranges of $k$, we conclude 
$$\pro{ \event_{\lf (1-\eta)n\rf}} \ll n^{-100}$$
as desired. \\

\noindent{\bf Acknowledgements}
The author thanks Terence Tao for invaluable discussions on this problem and on random matrix theory in general, as well as for helpful feedback on preliminary versions of the manuscript.
Thanks also go to 
Ioana Dumitriu and Jamal Najim for the suggestion to consider signed rrd matrices, in large part because the proof of Theorem \ref{thm_pm} inspired arguments to improve the main theorem, allowing the degree $d$ to lower from $n^{1/2+\eps}$ to $C\log^2n$.
Finally, the author is grateful to the anonymous referees for their careful reading and numerous corrections and suggestions to improve the manuscript.

\bibliographystyle{abbrv}
\bibliography{biblio_sing.bib}   

\end{document}